\newtheorem{theorem}{Theorem}[section]
\newtheorem{corollary}[theorem]{Corollary}
\newtheorem{definition}[theorem]{Definition}
\newtheorem{lemma}[theorem]{Lemma}
\newtheorem{remark}[theorem]{Remark}
\newcommand{\m}{\mu}
\newcommand{\lmp}{\l \mu \mu'}
\def\N{\ifmmode{\rm I\mkern-3.1mu
N\mkern0.5mu}\else{\rm I\kern-.16em
N\hskip0.5pt\ }\fi\relax} 
\def\tre{\sqsubseteq}
\def\a{\alpha}
\def\b{\beta}
\def\e{\varepsilon}
\def\g{\gamma}
\def\l{\lambda}
\def\r{\rho}
\def\th{\theta}
\def\lm{\l \mu}
\def\lmp{\l \mu \mu'}
\def\ra{\rightarrow}
\def\si{\sigma}
\def\r{\rho}
\def\two{\twoheadrightarrow}
\def\v{\vdash}
\def\G{\Gamma}
\def\F{\displaystyle\frac}
\def\bdf{\begin{definition}}
\def\edf{\end{definition}}
\def\be{\begin{enumerate}}
\def\ee{\end{enumerate}}
\def\bd{\begin{description}}
\def\ed{\end{description}}
\def\ite{\begin{itemize}}
\def\ete{\end{itemize}}
\def\bp{\begin{proof}}
\def\ep{\end{proof}}
\def\bl{\begin{lemma}}
\def\el{\end{lemma}}
\def\bt{\begin{theorem}}
\def\et{\end{theorem}}
\def\bpt{\begin{prooftree}}
\def\ept{\end{prooftree}}
\def\bdf{\begin{definition}}
\def\edf{\end{definition}}
\def\<{\langle}
\def\>{\rangle}
\def\s{\sigma}
\def\f{\rightarrow}
\def\B{\bot\!\!\!\!\bot}
\title{Normalization properties of $\l\m$-calculus using realizability semantics }
\author{P\'eter Batty\'anyi\thanks{Department of Computer Science, Faculty of Informatics, University of Debrecen, Kassai \'ut 26, 4028 Debrecen, Hungary,
{\tt battyanyi.peter@inf.unideb.hu}} $\;\;\;$
        and$\;\;\;$
Karim Nour\thanks{Universit\'e Savoie Mont Blanc, CNRS, LAMA, LIMD,
73000 Chamb\'ery, France,
{\tt karim.nour@univ-smb.fr}}\\\\}
\begin{document}
\maketitle

\begin{abstract}
In this paper, we present a general realizability semantics for the simply typed $\l\m$-calculus. Then, based on this semantics, we derive both weak and strong normalization results for two versions of the $\l\m$-calculus equipped with specific simplification rules.  The novelty in our method, in addition to its more systematic approach, lies in its applicability to a broader set of reduction rules without relying on the usual postponement technique. Our approach is original in that it introduces a parameter into the definition of the model, thus establishing a general result which we can then apply to systems with different sets of reduction rules by adjusting the parameter accordingly. Our saturation conditions also lead to a neat characterization of typable $\lambda\mu$-terms.\\
\end{abstract}

{\bf Keywords and phrases:} 
$\l\m$-calculus ; classical logic ;
strong normalization ;
weak normalization ;
realizability semantics ;
saturated set.\\

{\bf Subject code classifications :} 03B40 ; 03B70 ; 03F05 ; 68Q42.

\section{Introduction}

\subsection*{The Curry-Howard correspondence} 

The Curry-Howard correspondence establishes a fundamental bridge between formal proofs in intuitionistic logic and
computer programs, particularly through formalisms such as the lambda calculus or combinatory logic. It can be viewed from
two complementary perspectives : on the one hand, as a process of extracting programs from proofs; on the other hand, as an
assignment of types-interpreted as logical formulas-to classes of programs. In both cases, reductions carried out on proofs
correspond precisely to computational reductions on the associated programs.
This deep connection between logic and computation has drawn the attention of many researchers concerned with ensuring
the correctness of programs by extracting them directly from mathematical proofs whose validity can be formally verified. In
well-designed systems, where the number of proof rules is finite, verifying a proof is often simpler than verifying a program
built independently.

Among the major contributions to this field, one may cite the work of Girard, Krivine, Leivant, and Parigot, who relied on
second-order logical systems associated with lambda calculus programs \cite{Gir,Kri2,Kri3,Par1,Par2,Par3}. These systems offer a dual advantage: a relatively
simple syntax and sufficient expressiveness to formalize proofs of complex mathematical results. In particular, they allow for
an elegant definition of standard algebraic data types (such as booleans, integers, lists, or trees), while ensuring uniqueness of
representation: for each object of a given type, there exists a unique (non-executable) normal lambda-term whose type
corresponds exactly to that object. This enables a natural representation of data within the calculus. It is also worth
noting that, on the computational side, the simplification of lambda terms is not limited to the well-known beta reduction. Using these additional rules, it can be ensured, for example, that the unique representative of a proof that an object $n$ is an integer is precisely the Church numeral $\underline{n}$.

Krivine's programming theorem illustrates this correspondence perfectly \cite{Kri1}. It states that any proof of the totality of a function defined by induction on a data type can be translated into a correct program, expressed in lambda calculus, that effectively
computes that function. Moreover, Krivine showed that such proofs can always be formulated within the framework of
intuitionistic logic, without resorting to reductio ad absurdum, thus reinforcing the constructive nature of the approach.

\subsection*{Parigot's $\lambda \mu$-calculus}

Several researchers have explored the extension of the Curry-Howard correspondence to classical logic. Griffin was the first
to open this path, observing that the natural encoding corresponding to classical reasoning (via reductio ad absurdum) closely
resembles the call/cc (call-with-current-continuation) instruction from the Scheme programming language \cite{Gri}. This instruction captures the current execution stack, enabling control over program flow, for instance, to throw an exception or change the control structure.

Building on this idea, several logical systems along with corresponding calculi have been proposed to provide a computational interpretation of classical logic. Among the earliest and most notable are: Rehof and S\o rensen's $\lambda\Delta$-calculus \cite{Reh}, Krivine's $\lambda_C$-calculus \cite{Kri3}, Parigot's $\lambda\mu$-calculus \cite{Par1}, Barbanera and Berardi's $\lambda_{Sym}$-calculus and $\lambda_{\mathscr{C}\tau^-}$-calculus \cite{BerBar, BB2}, and finally Curien and Herbelin's $\lambda\mu\tilde{\mu}$-calculus \cite{Cur-Her}. Each of these systems has its own motivations, strengths, and limitations.
Other systems have been introduced, based on a more refined analysis of logical systems, particularly linear logic and the notion of polarization. Among the notable works, those by Wadler \cite{Wad03}, Laurent \cite{Lau02}, Zeilberger \cite{Zei09}, and Terui \cite{Ter08} have significantly enriched this approach, each contributing to a deeper understanding of computational duality, polarization, and complexity within the framework of focused calculi. Munch-Maccagnoni's $L$-calculus \cite{MM} provides, in particular, an elegant framework that unifies computation and proof dynamics through precise control of logical phases.

In the present work, we choose to build upon Parigot's $\lambda\mu$-calculus, for several reasons. First, it constitutes a very
natural extension of earlier work in second-order intuitionistic logic. Parigot's system is based on natural deduction, with
standard introduction and elimination rules, and two additional rules to handle negation and reductio ad absurdum. On the
computational side, the fundamental lambda calculus constructors (application and abstraction) are retained, with three major
additions: a new class of variables, called classical variables, used to encode assumptions handled via reductio ad absurdum;
two new syntactic constructs- brackets (for eliminating intuitionistic negation) and a $\mu$ operator (for eliminating
classical negation, i.e., reductio ad absurdum). Parigot also imposes certain syntactic restrictions on his logical system to
limit the variety of outcoming programs and to facilitate the proof of key properties. One such restriction requires that reductio ad
absurdum only be used after an explicit elimination of negation.

On the computational side, a new reduction rule, denoted $\mu$, is added to the standard beta reduction. This rule
corresponds to the elimination of a cut linked to reductio ad absurdum. From the outset, Parigot also introduced two
administrative reductions, $\rho$ and $\delta$, used to simplify programs by removing unnecessary $\mu$-constructs arising
from redundant classical reasoning.

Parigot's system satisfies all the expected properties: confluence of the calculus, type preservation under reduction, and strong
normalization in the typed setting. However, from his earliest work, Parigot identified a major difficulty in representing data.
For example, a classical proof that an object $n$ is an integer may yield a normal $\lambda\mu$-term (normal under all
reductions) which differs from the Church numeral $n$. Parigot referred to such objects as classical integers. 
The problem is that the output of a program manipulating classical integers is not easily interpretable: it becomes difficult to determine the value of the computed integer. This problem is not merely related to the encoding of data in the calculus. Even when the calculus is extended with constants representing data types, a proof of the totality of a function may yield a non-identifiable object, that is, a term whose actual value cannot be determined. This occurs because the proof of totality may rely on proof by contradiction, which leads to the construction of terms whose components arise directly from non-constructive reasoning. Such operators, although valid within the calculus, lack explicit computational content.

To address this, Parigot proposed several solutions: an external algorithm for extracting the value of a classical integer from
its representation; the use of storage operators introduced by Krivine, which simulate call-by-value in a call-by-name
language \cite{Kri3}; and the introduction of a new reduction rule, $\mu'$, which enables the retrieval of the value of a classical integer, provided it is applied to the reduced form of the term representing that integer \cite{Par2}.

\subsection*{The $\mu'$-rule}

The original version of Parigot's $\lambda\mu$-calculus has some significant limitations. In particular, due to the syntactic
restrictions it imposes, it is not possible to encode a proof of the classical tautology $\neg\neg A \rightarrow A$ without
introducing free variables of type $\neg\bot$. This issue arises for other classical tautologies as well. Another major limitation
concerns B{\"o}hm's theorem and separation property, which no longer holds in the presence of many reduction rules. In his dissertation, Py constructed two distinct closed normal terms, $M$ and $N$, such that there exists no term $L$ with $(L)M$ reducing to the first projection $\lambda x.\lambda y.x$ and $(L)N$ to the second projection $\lambda x.\lambda y.y$ \cite{Py,Dav-Py}.

To overcome these difficulties, de Groote proposed a more flexible syntactic variant of the $\lambda\mu$-calculus \cite{de Gro4}. In particular, he removed the constraint linking $\mu$-abstraction and application, splitting the rules associated with negation:
one for introduction and another for elimination, without further restrictions. This modification allowed him to construct an
abstract machine for the $\lambda\mu$-calculus. Furthermore, Saurin showed that this variant satisfies the separation
property \cite{Sau1, Sau2}. Moreover, this version allows for very simple proofs of classical tautologies without resorting to free variables. For these reasons, we have chosen to adopt the version of the $\lambda\mu$-calculus proposed by de Groote.

Let us now consider the $\mu'$ reduction rule, which is the symmetric counterpart of the $\mu$ rule and whose introduction
is quite natural. While $\mu$ allows the propagation of an argument inside a term, $\mu'$ enables the propagation of a
function. However, adding this rule breaks confluence: a term may reduce to distinct outcomes.
Nevertheless, the $\mu'$ rule preserves types in the simply typed system (without quantifiers). In contrast, in second-order
systems, ensuring type preservation requires assigning algorithmic content to quantifier rules.
Py studied such a system, but only in a call-by-value setting. For this typed version, he proved confluence and strong
normalization. Additionally, Parigot had posed a long-standing open question: is the system with both $\mu$ and $\mu'$
strongly normalizing? This question was first answered positively by David and Nour \cite{Dav-Nou4}, followed by a simpler proof proposed in \cite{BN2}.The same authors also established strong normalization results for the simply typed system with rules $\mu$ and $\mu'$ together with the $\beta$ rule.

In our work, we choose to investigate de Groote's flexible variant of the $\lambda\mu$-calculus, particularly to incorporate
the $\mu'$ rule into Parigot's reduction system, which is essential for retrieving the value of a classical integer from its
representation. However, this syntactic flexibility permits problematic derivations, such as a sequence of two reductio ad
absurdum steps, equivalent to a proof of $\bot$ by contradiction. To remedy this, Py introduced a new reduction rule, denoted
$\varepsilon$, designed to eliminate such undesirable constructions.
Surprisingly, Batty\'anyi discovered in his thesis that certain well-typed terms fail to be strongly normalizing in the presence of
the simplification rules $\rho$ or $\varepsilon$. This unexpected result led us to focus on weak normalization for the simply
typed system. We also verified that this system preserves the uniqueness of data representation: each integer has a unique
representation \cite{BN3}. At this stage, we have formalized the proof only for natural numbers, but it naturally extends to other data types.

In this article, we demonstrate that de Groote's $\lambda\mu$-calculus, extended with the rules $\beta$, $\mu$, $\mu'$, $\rho$, $\delta$, and $\varepsilon$, satisfies the weak normalization  property in the simply typed setting. Although these results were already established in our previous work \cite{BN3}, the semantic approach adopted here provides a uniform framework for obtaining the expected properties of both calculi: $\beta\mu\rho\delta\varepsilon$ and $\beta\mu\mu'\rho\delta\varepsilon$. 
We emphasize that, due to the presence of the $\mu'$ rule, the latter calculus is not strongly normalizing. Thus, only weak normalization results can be expected. In contrast to \cite{BN3}, the normalization proofs presented here are relatively short once suitable reducibility candidates are identified.

However, a direct extension of these results to a second-order system does not appear feasible. On the one hand, the $\mu'$ rule fails to preserve types in such a framework; on the other, our proofs rely on arithmetical arguments that are intrinsically tied to the simply typed setting. It is worth noting that Raffalli (as reported in Py's thesis) provided an example of a term of type $A$ that, via the $\mu'$ rule, can be reduced to an arbitrary term of type $B$. To avoid such anomalies, Py proposes assigning algorithmic content to the rules governing the universal quantifier. While promising, this approach significantly increases the complexity of the system and lies beyond the scope of the present work.

This paper has two main objectives. First, we show, using a single realizability semantics, that it is possible to obtain proofs of either strong or weak normalization, depending on the chosen set of reduction rules for the calculus. Second, we hope that this kind of semantics can eventually be extended to a type system based on second-order classical logic (with quantifiers), despite the necessary modifications to the calculus and the introduction of additional reduction rules


\subsection*{Normalization Properties}

As previously mentioned, the authors of this paper, in collaboration with David, have established in a series of works several normalization results for the simply typed version of the $\l\m$-calculus. These include strong normalization in the absence of the $\m'$ rule, as well as weak normalization when this rule is present. The corresponding proofs are technically intricate, relying on subtle inductions over $n$-tuples involving term size, type size, and bounds on strongly normalizing terms. To obtain these results, it is often necessary to treat the reduction rules separately: the $\b$ rule on one side, and the $\m$, $\m'$ and simplification rules on the other. This separation is supported by technical commutation lemmas. Such a differentiated treatment of the rules already appears in Parigot's work, where the proofs primarily focus on the $\b$ and $\m$ rules.

While the syntactic and arithmetical approach offers valuable insight into the mechanisms underlying (strong or weak) termination, it also has significant limitations. The introduction of new reduction rules typically requires reworking the entire proof structure to accommodate new behaviors, sometimes leading to major surprises, such as the loss of strong normalization. Furthermore, this type of proof does not readily extend to type systems with quantifiers. These considerations motivate the exploration of semantic approaches based on suitably adapted reducibility candidates.

Among the well-established techniques for proving the normalization of typed $\l$-calculi, realizability holds a central place. Originally introduced by Tait \cite{Tai} to prove the normalization of System $T$, and later extended by Girard \cite{Gir0}  to System $F$, this method, also known as normalization by reducibility or normalization by logical relations, interprets types as sets of terms, called realizers, whose construction ensures desirable properties such as termination. Over the years, many refinements and generalizations of this method have been developed; a detailed survey can be found in Gallier's work \cite{Gal}.

Beyond its applications to intuitionistic systems, realizability techniques have been successfully adapted to classical logic  \cite{BerBar,Par4}. A major contribution in this direction is due to Krivine \cite{Kri6}, who introduced a framework for classical realizability based on a $\l$-calculus with control, interpreted through an abstract machine. In this setting, terms are evaluated against stacks, and control (which embodies classical reasoning) is achieved through mechanisms that allow stacks to be saved and restored. This formalism has proved remarkably fruitful: on the logical side, it has led to the construction of new models of set theory and notably extended Cohen's forcing technique \cite{Kri4,Kri5,Kri6,Kri7,Kri8}; on the computational side, it has offered alternative tools for analyzing the computational content of classical proofs.

Let us begin by recalling that the notion of reducibility candidates, originally introduced in the context of the $\l$-calculus and later extended to its various refinements, can serve two distinct purposes: on the one hand, to establish normalization results, as we discussed earlier; on the other hand, to describe the behavior of typed terms, as in \cite{Kri2,Kri3,NS}. Another line of work \cite{Nou3} has focused on the realizability semantics of the simply typed $\l\m$-calculus, aiming to construct a general semantic framework with a view toward a completeness result, that is, an equivalence between typability and type inhabitation. This investigation reveals several challenges, most notably the integration of simplification rules into the semantic setting. The result obtained, although partial, is not fully satisfactory, as it relies on a modification of the semantics that lacks naturalness and whose interpretation remains somewhat unintuitive. This work thus highlights significant obstacles to defining a truly general realizability semantics for the simply typed $\l\m$-calculus. In \cite{Dav-Nou2}, it was shown that techniques based on reducibility candidates fail to establish strong normalization results in the presence of the $\mu'$ rule. The standard technical lemmas usually required in this context do not hold, which highlights the specific nature of the $\lambda\mu$-calculus when this rule is included.

Our approach, adopted in this paper, consists in defining a semantics based on reducibility candidates, with the goal of establishing normalization properties. This construction is also designed to allow, via this semantics, the analysis of the behavior of certain typed terms, although this aspect is not developed in the present work. A distinctive feature of our method lies in the fact that all reduction rules are considered simultaneously, without distinguishing between logical rules and simplification rules. The construction of the semantics is parameterized by a specific set, denoted $\B$, which is then used to establish various normalization results, regardless of whether the $\mu'$ rule is present or not.

In normalization proofs for other systems, this set is often fixed from the outset, for instance by taking the set of strongly normalizing terms. In contrast, our approach isolates the essential properties that this set must satisfy in order to ensure a correctness result. This allows us to specify it as needed, and thus to obtain several results within the same semantic framework.

Certain technical difficulties led us to restrict our study to the set of typable terms. This is not a limitation, since the properties we aim to establish concern only such terms. Moreover, we have shown that the set of typable terms is the only one satisfying our saturation conditions. It follows that, in order to establish a property on typable $\lambda\mu$ terms, it suffices to show that the set of terms satisfying this property forms a saturated set.

\subsection*{Summary of the Paper}

The paper is structured as follows: the next section is devoted to the relevant notions and already known results concerning the simply-typed $\l\m$-calculus. In Section 3, we develop the tools needed for the subsequent sections: we define a notion of saturation with the help of which we introduce the reducibility candidates for the typed calculus. We conclude this section with the correctness theorem, which asserts that our interpretation adheres to typability rules. Furthermore, our definition of saturation implies that the set of typable terms is the only set that is saturated. Therefore, when we intend to prove a certain property for the typable terms, it is enough to prove that the set of typable terms possessing that property is saturated. By our result, it then follows that every typable term enjoys that property. 
In Section 4, we present a proof of the strong normalization of the simply-typed $\l\m$-calculus equipped with $\b\m\r\th\e$-reduction. In Section 5, we take a step further and augment the calculus under consideration with the $\m'$-rule. We demonstrate that the set of terms having a normal form is saturated, which implies that the $\b\m\m'\r\theta\e$-reduction is weakly normalizing. We then conclude with a discussion on future work.

\section{The $\l\mu$-calculus}

The $\l\m$-calculus was introduced by  Parigot \cite{Par1} as a tool for encoding classical natural deduction with terms. In our paper, we restrict our attention to the simply typed calculus, specifically, we are concerned with the representation of the implicational fragment of classical propositional logic using natural deduction style proofs.
 
The $\lm$-calculus involves two sets of variables: one set consists of the original $\l$-variables, often referred to as intuitionistic variables, while the other set comprises $\mu$-variables, known as classical variables.  Parigot's calculus imposes certain restrictions on the rules for forming terms. Notably, a $\mu$-abstraction must immediately be followed by an application involving a $\mu$-variable, and vice versa. However, de Groote \cite{de Gro1} introduced a modified version of the $\lambda\mu$-calculus, which offers more flexibility in its syntax. In de Groote's syntax, a $\mu$-abstraction can be followed by an arbitrary term (in the untyped version), not by a $\mu$-application only. For the purpose of this paper, we exclusively focus on the de Groote-style calculus, and the following definitions pertain only to this version.

\begin{definition}[$\l\m$-terms]~
  \label{def:defterms}
\begin{enumerate}
 \item Let ${\cal V}_{\l} = \{ x,y,z,\dots \}$ denote the set of $\l$-variables and
${\cal V}_{\m} = \{\a ,\b ,\g ,\dots \}$ denote the set of $\mu$-variables,
respectively. The $\l\m$-term formation rules are the following.
\[
{\mathcal{T}} \;\; := \;\; {\cal V}_{\l} \;\; \mid
\;\;  \l {\cal V}_{\l}.{\mathcal{T}}\;\; \mid
\;\;  ({\mathcal{T}}){\mathcal{T}}\;\; \mid
\;\; [{\cal V}_{\m}]{\mathcal{T}} \;\; \mid
\;\; \m {\cal V}_{\m}.{\mathcal{T}} \;\;
\]
We decided to adopt Krivine's notation for the applications, i.e., we write $(M)N$ instead of $(M\, N)$ if we apply $M$ to $N$. 

\item The complexity of a $\l\m$-term is defined inductively as follows:
$cxty(x)=1$, $cxty((M)N)=cxty(M)+cxty(N)+1$ and $cxty(\l x. M) = cxty(\mu \a. M) = cxty([\a]M) = cxty(M)+1$.

\item In a $\l\m$-term the $\l$ and $\mu$ operators bind the variables.
We therefore consider terms modulo equivalence, which allows to rename the variables bound by a $\l$- or a $\mu$-abstraction. The set of free variables in a $\l\m$-term $M$, including both $\l$- and $\m$-variables, will be denoted by $\mathrm{fv}(M)$.


\end{enumerate}
\end{definition}

\begin{definition}[Type system]\label{def:typedsystem}

The types are built from a set ${\cal V}_{\mathfrak{T}}$ of atomic types (propositional variables) and the constant $\bot$ with the connective $\ra$. The type formation rules are the following.
\[
{\mathfrak{T}} \;\; := \;\; {\cal V}_{\mathfrak{T}} \cup \{ \bot \} \;\; \mid \;\;  {\mathfrak{T}} \ra {\mathfrak{T}} \;\;
\]
In the definition below, $\G$ (resp. $\Theta$) denotes a (possibly empty) context, that is, a finite set of declarations of the form $x:A$ (resp. $\a : B$)
for a $\l$-variable $x$ (resp. a $\mu$-variable $\a$) and types $A,B$ such that a $\l$-variable $x$
(resp. a $\m$-variable $\a$) occurs at most once in an expression $x:A$ (resp. $\a :  A$) of $\G$ (resp. of $\Theta$).
The typing rules are as follows.\\

\begin{center}
$\F{}{\G, x:A\;\vdash\; x:A;\Theta}ax$ \\[1cm]
\end{center}

\begin{tabular}{lccl}
$\F{\G, x : A\; \vdash\; M:B;\Theta  }{\G \vdash\; \l x. M:A \ra B;\Theta}\ra_i$   & &  & $\F {\G \;\v\; M:A\ra B;\Theta\;\;\;\;\G \;\v\; N:A;\Theta}{\G \;\v\; (M)N:B;\Theta}\ra_e$\\[1.25cm]

$\F {\G \;\vdash\; M:A,\a : A;\Theta}{\G \;\v\; [\a] M:\bot,\a : A;\Theta }\bot_i$  & & & $\F {\G \;\v\; M:\bot,\a : A;\Theta }{\G \;\v\;\m \a. M:A;\Theta}\bot_e$\\[1cm]
\end{tabular}

We say that the $\l\m$-term $M$ is typable with type $A$, if there is $\G,\Theta$ and a derivation tree such that the uppermost nodes of the tree are axioms and the bottom node is $\G \;\v\; M:A;\Theta$. The presentation with two contexts (one on the left for the $\l$-variables and one on the right for the $\m$-variables) allows us to avoid using negation in types. However, when we correlate typability proofs with proofs in classical natural deduction, the context standing on the right corresponds to a set of hypothesis consisting of negative formulas. Alternatively, we could have applied two contexts on the left: one for the $\l$-variables and one for the $\m$-variables, represented by negated types.

Observe that, in the typed $\l\mu$-calculus, not every term is accepted as well typed. For example, we cannot write a $\l\m$-term of the form $([\a]M)N$ or $\mu \a. \l x.M$.
\end{definition}

In the realm of $\lambda\mu$-terms, various reduction rules exist, with some being fundamental and corresponding to the elimination of logical cuts, while others are introduced to eliminate specific subterms and are often referred to as simplification rules. In this article, we focus on the rules essential for achieving a reasonable representation of data, similar to the case of the $\lambda$-calculus. For a more comprehensive understanding of these rules, readers are encouraged to consult \cite{BN3}, where we demonstrate that Church numerals are the only normal $\lambda\mu$-terms that have the type of integers provided we consider the additional rules $\mu'$, $\rho$, $\varepsilon$ and $\theta$.

Throughout this paper, we employ various types of substitutions. These substitutions include necessary variable renaming to prevent the capture of free variables. Below, we present the concept of $\mu$-substitution, which plays a crucial role in defining $\mu$-redex, as introduced in Definition \ref{def:redex}.

\begin{definition}[$\mu$-substitution]~
\begin{enumerate}
\item  A $\mu$-substitution $\si$ is an expression of the form $[\a:=_sN]$ where $s\in \{l,r\}$, $\a \in {\cal V}_{\mu}$ and  $N \in {\cal T}$.
\item Let $\si$ be the $\mu$-substitution $[\a:=_sN]$ and  $M \in {\cal T}$. We define by induction the $\l\m$-term $M\si$. 
We adopt the convention of renaming bound variables before a substitution so that no variable collision occurs.
Then we can assume that the free variables of the $\l\m$-term $N$ and variable $\a$ are not bound by any $\mu$-abstraction in the $\l\m$-term $M$.
\begin{itemize}
 \item If $M = x$, then $M\si = x$.
 \item If $M = \l x.M'$, then $M\si = \l x.M'\si$.
 \item If $M = (M_1)M_2$, then $M\si = (M_1\si)M_2\si$.
 \item If $M = \mu \b.M'$, then $M\si = \mu \b.M'\si$.
 \item If $M = [\b]M'$ and $\b \neq \a$, then $M\si = [\b]M'\si$.
 \item If $M = [\a]M'$  and $s = r$, then $M\si = [\a](M'\si)N$.
 \item If $M = [\a]M'$  and $s = l$, then $M\si = [\a](N)M'\si$.
\end{itemize}
We adopt the convention that substitution has higher precedence than application and abstraction.
\end{enumerate}
\end{definition}

In order to define $\varepsilon$-reduction, we need the following notion.

\begin{definition}[$\a$-translation]\label{trans}
Let $M \in {\cal T}$ and $\a \in {\cal V}_{\mu}$. We define the $\a$-translation $M_{\a}$ of $M$ by induction on $M$.
\begin{itemize}
\item If $M = x$, then $M_{\a}=x$.
\item If $M = \l x.M'$, then $M_{\a}=\l x.{M'}_{\a}$.
\item If $M = (P)Q$, then $M_{\a}=(P_{\a})Q_{\a}$.
\item If $M = \m \b. M'$, then $M_{\a}=\m \b. {M'}_{\a}$.
\item If $M = [\b]M'$ and $\b \neq \a$, then $M_{\a}=[\b]{M'}_{\a}$.
\item If $M = [\a]M'$, then $M_{\a}={M'}_{\a}$.
\end{itemize}
Intuitively, $M_{\a}$ is the result of replacing every subterm $[\a]N$ in $M$ with $N$.
\end{definition}

We proceed by defining the specific redexes that are the focus of this paper, along with the reductions they induce. Additionally, we provide a brief overview of some of the important results in relation to the $\lambda\mu$-calculus. For a more in-depth understanding of the requisite concepts and definitions, interested readers are referred to the standard textbooks, such as \cite{GLT} and \cite{Kri1}.

\newpage

\begin{definition}[Redex]~\label{def:redex}
\begin{enumerate}
\item A $\beta$-redex is a $\l\m$-term of the form $(\l x. M) N$ and  we call $M[x := N]$ its contractum.
The $\l\m$-term $M[x := N]$ is obtained from $M$ by replacing every free occurrence of $x$ in $M$ by $N$. 
This substitution is sometimes referred to as $\b$-substitution.
\item A $\mu$-redex is a $\l\m$-term of the form $(\m \a. M) N$ and  we call $\m \a.M[\a :=_r N]$ its contractum.
Intuitively, $M[\a :=_r N]$ is obtained from $M$ by replacing every subterm in $M$ of the form $[\a]P$ by $[\a](P)N$.
\item A $\mu'$-redex is a $\l\m$-term of the form $(N)\m \a. M$ and  we call $\m \a.M[\a:=_l N]$ its contractum.
Intuitively, $M[\a :=_l N]$ is obtained from $M$ by replacing every subterm in $M$ of the form $[\a]P$ by $[\a](N)P$.
\item A $\rho$-redex is a $\l\m$-term of the form $[\b]\m \a.M$ and  we call $M[\a := \b]$ its contractum.
The $\l\m$-term $M[\a := \b]$ is obtained from $M$ by replacing every free occurrence of $\a$ by $\b$.
\item A $\theta$-redex is a $\l\m$-term of the form $\m \a.[\a]M$ where $\a \not\in {\rm fv}(M)$ and we call $M$ its contractum.
\item An $\e$-redex is a $\l\m$-term of the form $\m \a.\m \b.M$ and we call $\m \a.M_{\b}$ its contractum.
\end{enumerate}
\end{definition}

The six reductions outlined in Definition \ref{def:redex} find their justification in the cut elimination rules of natural deduction within classical propositional logic. As a consequence of these reductions, the calculus exhibits a type preservation property, as demonstrated in Theorem \ref{SR}, which is also known as the subject reduction property.\\

We will explain how these redex reductions are justified from the typing perspective. The rules $\beta$ and $\mu$ correspond respectively to the intuitionistic and classical cut elimination rules for implication.

\begin{itemize}
\item Rule $\b$: Given the declaration $x:A$, we have $M:B$. If we also have $N:A$, then $(\l x.M)N:B$. It suffices to replace in $M$ all occurrences of $x$ by $N$ to obtain a term of type $B$. This represents the well-known case in which a cut, formed by a subsequent $\rightarrow$-introduction followed by a $\rightarrow$-elimination, is eliminated at the proof level.

\item Rule $\m$: Given the declaration $\a:A \f B$, we have $M : \bot$. If we also have $N:A$, then $(\m \a.M)N:B$. It suffices to start with the declaration $\a : B$ and to replace in $M$ each subterm of the form $[\a]P$ by $[\a](P)N$ to obtain a term of type $B$. We observe that the type associated with $\a$ decreases during this reduction. At the proof level, $\mu$-reduction corresponds to the simplification of a reasoning by contradiction, when a cut formed by a $\bot$-introduction and $\bot$-elimination is replaced by similar cuts of lower type complexity.  

\item Rule $\m'$: This rule does not correspond to a logical cut, but it is the symmetric counterpart of rule $\m$. Given the declaration $\a :A$, we have $M: \bot$. If we also have $N:A \f B$, then $(N)\m \a.M :B$. It suffices to start with the declaration $\a : B$ and to replace in $M$ each subterm of the form $[\a]P$ by $[\a](N)P$ to obtain a term of type $B$. Here, we observe that there is no direct relation between the two types associated with $\a$ before and after the reduction.
\end{itemize}

Rules $\m$ and $\m'$ do not allow elimination of the operator $\m$. However, the three other rules $\r$, $\th$, and $\e$ allow simplification of terms by locally eliminating occurrences of $\m$.

\begin{itemize}
\item Rule $\r$: Given the declaration $\alpha : A$, we have $M : \bot$. If we also have $\beta : A$, then $[\b]\m \a. M : \bot$. It suffices to replace in $M$ all occurrences of $\a$ by $\b$ to obtain a term of type $\bot$.

\item Rule $\th$: Given the declaration $\a : A$, we have $M:A$, hence $\m \a.[\a]M :A$. This term reduces simply to $M$, provided that $\a$ does not appear free in $M$.

\item Rule $\e$: Given the declarations $\a : A$ and $\b : \bot$, we have $M:\bot$, hence $\m \a.\m \b.M :A$. Since both $M:\bot$ and $\m\b.M:\bot$, it suffices to eliminate $\m \b$ as well as all occurrences of $\b$ in $M$ to obtain a term of type $A$. Hence, $\e$-rule removes a proof of $\bot$ by contradiction.
\end{itemize}

\begin{definition} [Reduction and normalization] Let ${\cal R} \subseteq \{\beta, \m, \m', \rho, \theta, \e\}$.
\begin{enumerate}
\item Let $M,M' \in {\cal T}$. We write $M \ra_{{\cal R}} M'$, if $M'$ is obtained from $M$ by replacing an $r$-redex in $M$, where $r \in {\cal R}$,
by its contractum. The reductions (on the redexes) take the following forms (the $\theta$-redex has an additional condition). 

\begin{center}
\begin{tabular}{llll}
   $(\l x.M)N$ & $\ra_{\b}$ & $M[x:=N]$  & \\
   $(\m \a.M)N$  & $\ra_{\m}$ & $\m \a.M[\a:=_rN]$  &\\
   $(N)\m \a.M$  & $\ra_{\m'}$ & $\m \a.M[\a:=_lN]$  &\\
   $[\b]\m \a.M$  & $\ra_{\r}$ & $M[\a:=\b]$  &\\
   $\m \a.[\a]M$  & $\ra_{\th}$ & $M$  &if $\a \not\in {\rm fv}(M)$\\
   $\m \a.\m \b.M$  & $\ra_{\e}$ & $\m \a.M_{\b}$  & \\
\end{tabular}
\end{center}

\item We denote by $\two_{\cal R}$ the reflexive, transitive closure of $\ra_{\cal R}$. I.e., $M \two_{\cal R} M'$ iff $M \ra_{\cal R} M_1\ra_{\cal R}  M_2 \ra \cdots \ra_{\cal R} M_k = M'$.
\item We denote by $\mathcal{NF}_{\cal R}$ the set of all $\l\m$-terms in ${\cal R}$-normal form, i.e., $\l\m$-terms that do not contain an $r$-redex for any $r \in{\cal R}$. 
\item A $\l\m$-term $M$ is said to be ${\cal R}$-weakly normalizable 
if there exists $M'\in \mathcal{NF}_{\cal R}$ such that $M \two_{\cal R} M'$. We denote by $\mathcal{WN}_{\cal R}$ the set of ${\cal R}$-weakly normalizable terms.
\item A $\l\m$-term $M$ is said to be ${\cal R}$-strongly normalizable, if there exists no infinite ${\cal R}$-reduction paths starting from $M$.
That is, any possible sequence of reductions eventually leads to a normal term. We denote by $\mathcal{SN}_{\cal R}$ the set of ${\cal R}$-strongly normalizable terms.
\end{enumerate}
\end{definition}

We list here the most important results that we will use in our paper. 
For the missing proofs, the reader is referred to \cite{BN2,BN3, Dav-Nou3}.

\begin{theorem}[Subject reduction]\label{SR}
The $\b\m\m'\rho\theta\e$-reduction preserves types, that is, if $\G \v M : A; \Theta$ and $M \two_{\b\m\m'\rho\theta\e} M'$, then $\G \v M' : A ; \Theta$.
\end{theorem}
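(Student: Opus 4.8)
The plan is to reduce the statement to the one-step case and then analyse each redex. Since $\two_{\b\m\m'\r\th\e}$ is the reflexive--transitive closure of $\ra_{\b\m\m'\r\th\e}$, a straightforward induction on the length of the reduction reduces everything to showing that a single contraction $M \ra M'$ preserves the type. For the one-step case I would first establish the usual \emph{generation} (inversion) lemma, obtained by reading each typing rule backwards: a derivation of $\G \v (M)N:B;\Theta$ decomposes into $\G \v M:A\ra B;\Theta$ and $\G \v N:A;\Theta$ for some $A$; a derivation of $\G \v \m\a.M:A;\Theta$ arises from $\G \v M:\bot;\a:A,\Theta$; a derivation of $\G \v [\a]M:\bot;\a:A,\Theta$ arises from $\G \v M:A;\a:A,\Theta$; and similarly for $\l$. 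The reductions that take place strictly inside a subterm are then handled by a routine congruence argument (induction on the structure of $M$, using generation to expose the subderivation typing the reduced subterm), so that only the six root cases remain to be checked.

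The core of the proof is a family of substitution and translation lemmas, one per reduction rule, each proved by induction on the term following the inductive clauses of the relevant definition, and each asserting preservation of typing while possibly shifting the type assigned to a $\m$-variable. For $\b$ this is the standard $\b$-substitution lemma. For $\m$ it is the right $\m$-substitution lemma: if $\G \v M:\bot;\a:A\ra B,\Theta$ and $\G \v N:A;\Theta$, then $\G \v M[\a:=_rN]:\bot;\a:B,\Theta$, the point being that every bracketed subterm $[\a]P$ with $P:A\ra B$ is rewritten to $[\a](P)N$ with $(P)N:B$. For $\m'$ it is the symmetric left version, rewriting $[\a]P$ (with $P:A$) into $[\a](N)P$ and shifting $\a:A$ to $\a:B$ when $N:A\ra B$. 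For $\r$ it is the $\m$-renaming lemma: $\G \v M:\bot;\a:A,\b:A,\Theta$ gives $\G \v M[\a:=\b]:\bot;\b:A,\Theta$. For $\e$ it is the translation lemma for $M_\b$: if $\G \v M:\bot;\a:A,\b:\bot,\Theta$, then $\G \v M_\b:\bot;\a:A,\Theta$, because each erased subterm $[\b]N$ already has $N:\bot$ (as $\b:\bot$), so deleting the bracket leaves the type unchanged.

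With these lemmas in hand, each root case follows by applying generation and then reassembling the derivation. For instance, in the $\m$ case, generation applied to $\G \v (\m\a.M)N:B;\Theta$ yields $\G \v M:\bot;\a:A\ra B,\Theta$ and $\G \v N:A;\Theta$; the right $\m$-substitution lemma gives $\G \v M[\a:=_rN]:\bot;\a:B,\Theta$; and a single $\bot_e$ step produces $\G \v \m\a.M[\a:=_rN]:B;\Theta$. The cases $\b$, $\m'$, $\r$, $\e$ are entirely analogous. The $\th$ case $\m\a.[\a]M \ra M$ uses the side condition $\a\not\in{\rm fv}(M)$: generation gives $\G \v M:A;\a:A,\Theta$, and a strengthening lemma (discarding a $\m$-declaration that does not occur free) yields $\G \v M:A;\Theta$.

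The main obstacle is getting the statements of the $\m$- and $\m'$-substitution lemmas exactly right, since the type of the active $\m$-variable genuinely changes under contraction (from $A\ra B$ to $B$ for $\m$, from $A$ to $B$ for $\m'$). The induction must carry the shifted $\m$-context through every clause of the substitution definition, and the two clauses that actually rewrite a bracket are where the bookkeeping is delicate: one must verify that $(P)N$ (respectively $(N)P$) is well typed at the new type $B$ precisely because the hypothesis supplies $N:A$ (respectively $N:A\ra B$). The $\e$ case is conceptually the subtlest, because the translation $M_\b$ \emph{deletes} brackets rather than wrapping them; its correctness hinges on $\b$ carrying type $\bot$, so that the erased bracket was typing a term already of type $\bot$ and its removal does not disturb the type of the enclosing term.
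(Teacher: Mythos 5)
Your proposal is correct and follows exactly the route the paper indicates: the paper omits the detailed proof (deferring to \cite{BN2,BN3, Dav-Nou3}), but its rule-by-rule typing justifications given after Definition \ref{def:redex} correspond precisely to your substitution and translation lemmas, including the type shift of $\a$ from $A\ra B$ to $B$ for $\m$, from $A$ to $B$ for $\m'$, and the observation that the $\e$-translation only erases brackets guarding terms already of type $\bot$. Nothing is missing; the reduction to the one-step case, the generation lemma, and the strengthening step for $\th$ are all the standard and intended ingredients.
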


Unfortunately, the previous type preservation property does not carry over to a system based on second-order logic, solely due to the presence of the $\mu'$ rule. To recover such a result, it is necessary to modify the term syntax so as to encode the typing rules for quantifier introduction and elimination. This must also be accompanied by new reduction rules.

\begin{theorem}[Strong normalization for $\b \m\m'$-reduction only]\label{SNmm'}~
\begin{enumerate}
\item The $\m\m'$-reduction is strongly normalizing for the untyped $\l\m$-terms.
\item The $\b \m\m'$-reduction is strongly normalizing for the typed $\l\m$-terms.
\end{enumerate}
\end{theorem}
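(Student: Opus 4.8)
The two assertions are proved in turn, the second building on the first, and the guiding observation throughout is that $\m$- and $\m'$-reduction never touch the $\l$-skeleton of a term: they neither create nor destroy $\l$-abstractions and never substitute into a $\l$-variable position, but only redistribute applications across $\m$-abstractions and $[\cdot]$-brackets. The only genuine difficulty is that a single $\m$- or $\m'$-step duplicates its argument once per free occurrence of the bound $\m$-variable, and that a duplicated argument may come to face a $\m$-abstraction inside a bracket (turning $[\a]\m\b.Q$ into the fresh redex $[\a](\m\b.Q)N$), so naive size measures increase.

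For the first assertion ($\mmp$-strong normalization of untyped terms) the plan is to exhibit a well-founded measure on $\mathcal{T}$ that strictly decreases under every $\m$- and $\m'$-step, together with monotonicity under all term constructors so that the decrease propagates to reductions performed in context. Because a plain node-count fails on the duplicating case, I would weight each bracket occurrence $[\a]P$ by a quantity dominating the arguments its binder can still absorb, and order terms lexicographically by (i) a multiset recording, for each $\m$-abstraction, the lengths of the application spines it faces, and (ii) a size-like second component; equivalently one can use a recursive path ordering tailored to the two rules. The step I expect to be delicate is exactly the duplication case: one must check that replacing each $[\a]P$ by $[\a](P)N$ — including the fresh redexes this may expose — strictly lowers the chosen order even when $\a$ occurs many times. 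An equally viable route is to prove directly that $\mathcal{SN}_{\mmp}$ is closed under all constructors, the crux being a substitution lemma asserting that $M[\a:=_rN]$ and $M[\a:=_lN]$ are $\mmp$-strongly normalizing whenever $M$ and $N$ are, proved by induction on the pair of longest-reduction lengths of $M$ and $N$.

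For the second assertion ($\b\mmp$-strong normalization of typed terms) I would bootstrap on the first. Note first that the naive reducibility-candidate method is not available here: as established in \cite{Dav-Nou2}, the standard technical lemmas break in the presence of $\m'$, precisely because $\m'$ places a $\m$-abstraction in argument position $(N)\m\a.M$, a pattern against which candidate closure fails. Instead the plan is arithmetical. Suppose, for contradiction, that a typed term $M$ admits an infinite $\b\mmp$-reduction. By the first assertion, $\mmp$ alone is strongly normalizing, so this infinite reduction must contain infinitely many $\b$-steps. I would then use commutation lemmas — pushing $\m$- and $\m'$-steps past $\b$-steps while controlling the $\m$-redexes a $\b$-step can create and the $\b$-redexes a $\m'$-step can expose — to project the infinite $\b\mmp$-reduction onto an infinite reduction of the simply typed calculus under $\b\m$ only, contradicting Parigot's strong normalization theorem for $\b\m$ \cite{Par1}. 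Since subject reduction (Theorem~\ref{SR}) guarantees that every term along the reduction remains typable with the same type, the whole argument stays within the typed fragment.

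The main obstacle, in both parts, is the same phenomenon: the interaction between argument duplication and the creation of fresh redexes. In part (1) it forces the move from a size measure to a lexicographic, multiset, or path-ordering measure and makes the duplication case the heart of the verification; in part (2) it is what prevents a clean postponement of $\m'$ after $\b$, so the commutation lemmas must be accompanied by an induction on a tuple combining type complexity, term size, and a bound on $\mmp$-reduction lengths — exactly the kind of subtle $n$-tuple induction that the present paper's semantic method is designed to circumvent.
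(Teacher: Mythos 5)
First, note that the paper itself contains no proof of Theorem \ref{SNmm'}: it is quoted as a known prior result, with the proofs delegated to \cite{Dav-Nou4,BN2,Dav-Nou3}, where they are long arithmetical arguments. Your proposal must therefore be judged on its own merits, and as it stands it has two genuine gaps. For part 1, neither of your two routes is actually carried out, and the fallback you describe --- a substitution lemma ``$M,N\in\mathcal{SN}_{\mu\mu'}$ implies $M[\alpha:=_rN]\in\mathcal{SN}_{\mu\mu'}$, proved by induction on the pair of longest-reduction lengths of $M$ and $N$'' --- does not go through on that induction. When $N=\mu\beta.N'$, each subterm $[\alpha]P$ of $M$ becomes the $\mu'$-redex $[\alpha](P[\alpha:=_rN])\mu\beta.N'$, which contracts to $[\alpha]\mu\beta.N'[\beta:=_l P[\alpha:=_rN]]$; to conclude you must apply the lemma to $N'$ and to $P[\alpha:=_rN]$, a term that is itself an output of the lemma and whose longest-reduction length is not controlled by $\eta(M)$ or $\eta(N)$. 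This cascading interleaving of reducts of $M$ with copies of $N$ is precisely why Parigot's question remained open and why the proofs in \cite{Dav-Nou4,BN2} resort to a minimal-counterexample analysis rather than a measure or a simple pairwise induction; a recursive path ordering will not certify the system either, for the same reason.

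For part 2, the projection strategy is the wrong approach. A $\mu'$-step $(N)\mu\alpha.M\rightarrow_{\mu'}\mu\alpha.M[\alpha:=_lN]$ has no residual in the $\beta\mu$-calculus, so an infinite $\beta\mu\mu'$-reduction cannot in general be projected onto an infinite $\beta\mu$-reduction of a related term; nor can $\mu'$ be postponed past $\beta$, since $\beta$-redexes of the shape $[\alpha](\lambda x.P)Q$ are created by the very $\mu'$-contractions you would need to push to the end. The published proofs of part 2 do not reduce to Parigot's theorem: they are direct inductions on tuples involving, among other things, the type of the $\mu$-variable being substituted (this is where typability genuinely enters, via the fact that this type decreases along $\mu$-steps), combined with part 1. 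You correctly identify the central obstacle --- duplication creating fresh redexes, and the failure of reducibility candidates recorded in \cite{Dav-Nou2} --- but naming the obstacle is not the same as overcoming it, and both of your concrete suggestions break exactly at that point.
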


The preceding results were the first to be established in the presence of the $\mu'$-reduction. The proofs are technical but remain arithmetical and syntactic in nature, relying on a fine-grained analysis of both termination and non-termination phenomena in the calculus. When the additional simplification rules are included, the strong normalization property is lost, and only weak normalization results remain. The proofs we provide also lead to the explicit construction of a normalization algorithm. It is important to point out that, in \cite{Dav-Nou2}, we proved that reducibility candidate techniques fail to establish strong normalization results in the presence of the $\mu'$ rule. Indeed, we provided counterexamples to several lemmas that are often considered essential in this type of proof.

\begin{theorem}[Weak normalization for all reductions]\label{SNmm'rte}~
\begin{enumerate}
\item The $\m\m'\rho\theta\e$-reduction is weakly normalizing for the untyped $\l\m$-terms.
\item The $\b\m\m'\rho\theta\e$-reduction is weakly normalizing for the typed $\l\m$-terms.
\end{enumerate}
\end{theorem}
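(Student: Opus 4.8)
The plan is to derive both statements from the strong normalization of the ``core'' rules already recorded in Theorem~\ref{SNmm'}, combined with the observation that the three simplification rules are size-decreasing. A direct induction on the redex shapes shows that each $\r$-, $\th$-, and $\e$-contraction strictly decreases the number of $\m$-abstractions occurring in a term (by exactly one), and also strictly decreases $cxty$; by contrast $\b$, $\m$, and $\m'$ may \emph{increase} the $\m$-count, since all three duplicate a subterm (the argument of a $\m$- or $\m'$-redex, or the substituted term of a $\b$-redex) that can itself carry $\m$-binders. The whole difficulty is concentrated in reconciling these two opposite tendencies.

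For part~(1) I would argue as follows. By Theorem~\ref{SNmm'}(1) every untyped term is $\m\m'$-strongly normalizing, so to each $M$ one may attach the natural number $\nu(M)$ equal to the length of the longest $\m\m'$-reduction issuing from $M$. The strategy I would adopt is: repeatedly $\m\m'$-normalize, and when a $\m\m'$-normal form is reached, contract a single (say innermost) simplification redex, then resume. Each simplification step lowers the $\m$-count, and the intervening $\m\m'$-phases terminate by $\nu$; the termination of the outer loop would then follow from a lexicographic measure combining the $\m$-count with $\nu$ --- provided one can bound the simplification redexes created during the $\m\m'$-phases, which is the delicate point.

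For part~(2) I would lift this scheme to the typed calculus. Here the core system is $\b\m\m'$, strongly normalizing on typed terms by Theorem~\ref{SNmm'}(2), and Theorem~\ref{SR} guarantees that types, hence the relevant type-complexity bounds, are preserved throughout. The natural strategy is again to $\b\m\m'$-normalize and then fire one simplification redex; part~(1), applied to the $\b$-free fragment $\{\m,\m',\r,\th,\e\}$, supplies the weak normalization of the pure $\m$-management activity that part~(2) must accommodate. To make the outer loop terminate I would separate the $\b$ rule from the $\m$-family by commutation lemmas and weigh terms by a multi-component well-founded measure combining the $\m$-count, the $\b\m\m'$-normalization bound, and type- and term-size parameters --- the same style of $n$-tuple induction on which the earlier syntactic proofs rest.

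The main obstacle, in both parts, is that the simplification rules and the cut-elimination rules \emph{create each other's redexes}: a $\r$-, $\th$- or $\e$-step can expose fresh $\b$-, $\m$- or $\m'$-redexes, while a $\b$-, $\m$- or $\m'$-step can duplicate $\m$-binders and thereby manufacture new simplification redexes. Consequently the $\m$-count is not monotone along arbitrary reductions and no single one-line measure decreases, which is precisely why a naive postponement fails. Overcoming this requires either the delicate commutation lemmas and tuple inductions sketched above, or --- as the present paper ultimately prefers --- replacing the global measure by a realizability model in which the existence of a normal form is pinned to a saturated set, so that weak normalization can be read off from a correctness theorem rather than from a decreasing measure.
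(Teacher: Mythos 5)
Your proposal is a plan rather than a proof, and the hole at its centre is one you name yourself: the lexicographic measure you propose for part (1) --- the $\m$-count paired with the $\m\m'$-normalization bound $\nu$ --- does not decrease along the outer loop. After a single $\r$-, $\th$- or $\e$-step lowers the $\m$-count by one, the ensuing $\m\m'$-normalization phase can raise it again by an arbitrary amount, since $\m$- and $\m'$-substitutions duplicate their argument once per occurrence of the bound $\m$-variable. So neither component of the pair is controlled across a full cycle, and the clause ``provided one can bound the simplification redexes created during the $\m\m'$-phases'' is precisely the entire difficulty, left unresolved. The same objection applies to the multi-component measure sketched for part (2); the commutation lemmas that would make such a tuple induction go through are exactly the technical content of the earlier syntactic proofs, and nothing in the proposal supplies them. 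As written, the argument establishes only the (correct but insufficient) facts that the simplification rules are size- and $\m$-count-decreasing and that the core systems are strongly normalizing.

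For comparison: the paper does not reprove part (1) at all --- Theorem \ref{SNmm'rte} is quoted from prior work, with the reader referred elsewhere for the proofs --- and its new proof of part (2) avoids global measures entirely. It sets ${\cal R}'=\{\b,\m,\m',\r,\e\}$, takes $\B'={\cal WN}_{{\cal R}'}\cap{\cal T}_t$, and verifies that $\B'$ satisfies the saturation conditions (C1)--(C6); Corollary \ref{closed} then forces $\B'={\cal T}_t$, and $\th$ is reinstated afterwards by the postponement argument of Lemmas \ref{theta1}--\ref{theta2} and Theorem \ref{conc}. The combinatorial work that your sketch defers is concentrated in condition (C4) (Lemma \ref{C4wn}): normalizing $(x)N_1\dots N_n$ requires tracking, via the $\a$-clean property and Lemmas \ref{WN2lem1}--\ref{WN2lem3}, exactly which redexes a $\m$- or $\m'$-substitution of one normal form into another can create, and showing they can all be discharged. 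Some control of redex creation of this kind is unavoidable on any route; your proposal correctly locates the obstacle but does not overcome it.
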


In this paper, we adopt completely different proof techniques for weak and strong normalizations, based on the method of reducibility candidates. Although the results themselves are already known, the method we propose is new for the $\lambda\mu$-calculus, as it provides a unified approach to obtaining these results.\\

In the sequel, let IH be an abbreviation for the phrase ``the induction hypothesis''.\\

\section{The semantics of the system}

The foundations of realizability methods go back to the work of Kleene \cite{Klee}, who introduced this approach to provide a constructive interpretation of intuitionistic proofs. Tait \cite{Tai} adapted these ideas to elegantly establish the strong normalization of the simply typed $\lambda$-calculus, introducing the well-known candidates of reducibility. This methodology was later deepened and generalized by Girard \cite{Gir} in the context of System $F$, where he significantly extended the applicability of reducibility candidates to prove strong normalization for this more expressive calculus. Krivine \cite{Kri1,Kri5} gave a new impetus to realizability techniques by developing first an intuitionistic, and then a classical, semantics that made it possible to handle proofs by direct computational means. Parigot \cite{Par1} introduced the $\lambda\mu$-calculus, which formally captured classical reasoning within the calculus and opened up a rich field of investigation. His proof of strong normalization \cite{Par4} was based on reducibility candidates: he proposed an elegant method for interpreting types using $\lambda\mu$-terms, relying on the notion of orthogonality.

Several authors have investigated similar techniques to prove strong normalization in various computational systems. Notably, one can cite the works of Barbanera and Berardi \cite{BerBar,BB2} on the $\lambda_{Sym}$-calculus and the $\l_{\mathscr{C}\tau^-}$-calculus, that of Podonovski \cite{Pol} and Herbelin \cite{HH} on the $\lambda\mu\tilde{\mu}$-calculus, and also the contributions of Munch-Maccagnoni \cite{MM} on the $L$-calculus. In each proof of strong normalization, reducibility candidates tailored to the specific syntactic features of the system are introduced, and key properties are established to achieve the intended result. To illustrate this point, consider a type system based on equivalences such as $\neg\neg A = A$: in this context, the use of fixed-point operators becomes essential to obtain equalities between sets of terms, due to the inherent circularity introduced by such type equivalences.

Other authors, such as Pitts \cite{Pit}, Dagand, Rieg and Scherer \cite{DRS}, Herbelin and Miquey \cite{MH}, and Miquel \cite{Miq}, have also contributed to the development of realizability techniques and the use of reducibility candidates, often within theoretical frameworks different from the one we address in this article.

Before introducing our definitions and properties, we wish to draw the readers' attention to the specific difficulties encountered when using reducibility candidates for the simply typed $\lambda\mu$-calculus. The underlying logical system is not as symmetric as those of the systems we have previously mentioned. 
In this calculus, there are no specific objects to designate ``stacks of arguments'', which makes it necessary to formulate definitions using sequences of terms. Moreover, this system contains many simplification rules, which can sometimes pose serious issues, particularly by compromising the strong normalization property in favor of a weaker form of normalization. This situation is unusual in the context of proof techniques based on reducibility candidates, where strong normalization is typically a central goal.

In what follows, we define reducibility candidates for the $\l\m$-calculus and prove its correctness.  First and foremost, we clarify what we mean by a saturated set. We then identify a particular set, denoted as $\B$, which we choose to be saturated. Additionally, we define the properties required of a set to be saturated with respect to $\B$, referred to as being $\B$-saturated. 

Following this, we construct a model consisting of $\B$-saturated sets and interpret the types within this model. In previous works related to similar topics, a specific set of terms was typically fixed from the outset, such as the set of strongly normalizable terms. Subsequently, it was shown that this set satisfied the necessary properties. In contrast, our approach involves identifying the properties of the set $\B$ that are sufficient to establish a correctness theorem. Once these properties are identified, we can turn to concrete examples by showing that certain sets satisfy them and are therefore saturated.

The conditions we impose on the set $\B$ are primarily derived from the typing rules. Our aim is to ensure the fulfillment of the correctness theorem, as indicated by Theorem \ref{adq}. To achieve this, we establish three groups of conditions related to saturation.

\begin{itemize}
\item[-] The first group consists of three straightforward conditions that ensure saturation concerning the constructors $\lambda$, $\mu$, and $[.]$. These conditions contain essential assumptions to maintain the typability of terms. While the first condition does not pose complications in subsequent sections, the other two are more delicate and require special attention.
\item[-] The second group consists of a single condition that is crucial for ensuring that all variables are within the set 
$\B$. This condition is essential for managing non-closed terms (terms containing free variables) and preserving specific properties related to the constructor $\leadsto$, which interprets the type operator, and, ultimately, the logical connective, $\rightarrow$.
\item[-] The final two conditions are relatively standard in the literature and are used in virtually all realizability semantics. They are crucial for managing the typing rules associated with logical introduction rules. It will become apparent later that these conditions, in particular, require that reductions must contain at least the head reduction rules corresponding to $\beta$-reduction and $\mu$-reduction.
\end{itemize}

Due to certain technical difficulties, we made the choice to demand that the set $\B$ exclusively contains typed terms. This choice was found to be justified, as we were going to derive results that only made sense within a typed environment. Consequently, we exercised caution in all our definitions to include only typed terms within the set $\B$.\\

The following definition introduces essential concepts, and Definition \ref{def:RBotgood} outlines the conditions under which a set is deemed saturated.

\begin{definition}[Preliminary notions]~
\begin{enumerate}
\item We denote  the set of typable $\l\m$-terms $\mathcal{T}_t$, i.e.\\
$\mathcal{T}_t = \{M \in \mathcal{T} \; \mid \; \exists \, \G,\Theta,A, \, {\rm such \, that}\;\G \v M:A;\Theta \}.$ 
\item If ${\cal L} \subseteq {\cal T}$, we denote  by $\mathcal{L}^{<\omega}$  the set of finite (possibly empty, denoted by $\varnothing $) sequences of $\l\m$-terms of ${\cal L}$.
\item Let ${\cal L} \subseteq {\cal T}$. If $\bar{N} =N_1 \dots N_n \in \mathcal{L}^{<\omega}$, we write $\bar{P} \tre \bar{N}$ if $\bar{P} = N_1 \dots N_k$ where $0 \leq k \leq n$. 
Intuitively, $\bar{P}$  is an initial subsequence of the sequence $\bar{N}$.
\item Let $M \in  {\cal{T}}$ and $\bar{P} \in \mathcal{T}^{<\omega}$, we define by induction the $\l\m$-term $(M)\bar{P}$ :
$(M)\varnothing = M$ and, if $\bar{P} = N \bar{Q}$, that is, $\bar{P}$ is the sequence consisting of $N\in \mathcal{T}$ followed by $\bar{Q}\in \mathcal{T}^{<\omega}$,  then $(M)\bar{P} =((M)N)\bar{Q}$.
\end{enumerate}
\end{definition}

\begin{definition}[Saturated sets]\label{def:RBotgood}
Let  $\B \subseteq \mathcal{T}_t$. We say that $\B$ is saturated if
\begin{enumerate}
\item[{\rm \bf (C1) :}] $\forall M \in \B $,  $\forall x \in {\cal V}_{\l}$, $\l x.M \in \B$.
\item[{\rm \bf (C2) :}] $\forall M \in \B$, $\forall \a \in {\cal V}_{\m}$,  if $\m \a.M \in {\cal T}_t$,  then $\m \a.M \in \B$.
\item[{\rm \bf (C3) :}] $\forall M \in \B$, $\forall \a \in {\cal V}_{\m}$,  if $[\a]M \in {\cal T}_t$, then $[\a]M \in \B$.
\item[{\rm \bf (C4) :}] $\forall n\geq 0$,  $\forall N_1, \dots, N_n \in \B$, $\forall x \in {\cal V}_{\l}$, if $(x)N_1 \dots N_n \in {\cal T}_t$, then $(x)N_1 \dots N_n \in \B$.
\item[{\rm \bf (C5) :}] $\forall M,N \in {\cal T}_t$, $\forall \bar{P} \in {\cal T}_t^{<\omega}$,  if $N \in \B$, $(\l x.M)N\bar{P}  \in  {\cal T}_t$ and $(M[x:=N])\bar{P} \in \B$, then 
$(\l x.M)N\bar{P}  \in \B$.
\item[{\rm \bf (C6) :}]$\forall M\in {\cal T}_t$, $N \in {\cal T}_t^{<\omega}$, if ${\bar N} \in \B ^{<\omega}$, $(\m \a.M){\bar N} \in   {\cal T}_t$  and $\m \a.M[\a:=_r{\bar N}] \in \B $, then $(\m \a.M){\bar N} \in  \B$.
\end{enumerate}
\end{definition}

\begin{remark}~\rm
\begin{enumerate}
\item The conditions {(C1)}, {(C2)} and {(C3)} describe the requirements for saturation concerning $\l$-, $\m$-abstraction and $\m$-application. We note that we must pay special attention to ensuring that we stay within the realm of typable terms. Consequently, additional conditions are needed to guarantee typability.
\item Condition {(C4)} implies that $\B$ contains $\l$-variables, which will prove to be essential for demonstrating certain closure properties of $\B$. 
\item Condition (C5) states that $\B$ is saturated with respect to head $\b$-reduction when the original term is an application. 
\item Condition (C6) is basically the same but concerns head $\m$-reduction. It is worth noting that a somewhat different condition can be formulated in this case since $\mu$ does not disappear during the reduction.
\item As we proceed, we will see that the stability properties of $\B$ outlined in the above definition are indispensable to the correctness theorem.
\end{enumerate}
\end{remark}

\begin{lemma}
The set ${\cal T}_t$ is saturated.
\end{lemma}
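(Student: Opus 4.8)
The plan is to verify each of the six saturation conditions (C1)--(C6) directly for $\B = {\cal T}_t$, the set of typable $\l\m$-terms. The key observation that makes all of these hold is subject reduction (Theorem \ref{SR}) together with the fact that the typing rules are structurally ``compositional'': whenever a compound term is typable, its relevant immediate subterms are typable, and conversely the constructors preserve typability under the side conditions that each condition already assumes.

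First I would dispatch the ``constructor'' conditions (C1), (C2), (C3). For (C1), given $M \in {\cal T}_t$, say $\G \v M : B; \Theta$, the rule $\ra_i$ (possibly after adding a dummy declaration $x:A$ to $\G$, which is harmless since we may weaken or rename) yields $\G \v \l x.M : A \ra B; \Theta$, so $\l x.M \in {\cal T}_t$. Conditions (C2) and (C3) are even easier, since their hypotheses already assume $\m\a.M \in {\cal T}_t$ respectively $[\a]M \in {\cal T}_t$; thus the conclusion is immediate from the definition of ${\cal T}_t = \B$. Next, for (C4), I would argue by induction on $n$: the hypothesis is that $(x)N_1\dots N_n \in {\cal T}_t$, so the conclusion $(x)N_1\dots N_n \in \B$ is again immediate by definition. (Indeed, one sees that (C2), (C3), (C4) hold vacuously/trivially precisely because they already postulate typability in their antecedents — this is the feature exploited throughout the paper to reduce properties to saturation.)

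The substantive conditions are (C5) and (C6), and these are where the real content lies. For (C5), the hypotheses give $(\l x.M)N\bar P \in {\cal T}_t$, so by definition of $\B = {\cal T}_t$ the conclusion $(\l x.M)N\bar P \in \B$ holds immediately. The same remark applies to (C6): its antecedent already assumes $(\m\a.M)\bar N \in {\cal T}_t$, whence $(\m\a.M)\bar N \in \B$ by definition. So in fact all six conditions reduce to the tautology that a typable term lies in ${\cal T}_t$; the only place where genuine reasoning (a typing-rule application) is needed is (C1), where the conclusion term $\l x.M$ is \emph{not} assumed typable in advance and must be shown typable from the typability of $M$.

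The main obstacle, and the one point requiring care, is therefore (C1): I must check that for \emph{every} $\l$-variable $x$ the term $\l x.M$ is typable whenever $M$ is, including the case where $x$ does not occur in $M$ or already appears in the context $\G$ with a different type. This is handled by a routine weakening/renaming lemma for the type system — if $\G \v M:B;\Theta$ then for a fresh variable $x$ and any type $A$ we have $\G, x:A \v M:B;\Theta$, and then $\ra_i$ applies. I would state this weakening fact explicitly (it follows by a trivial induction on derivations, and is implicitly available from the standard metatheory recalled before Theorem \ref{SR}) and then conclude. With (C1) secured and (C2)--(C6) following immediately from the definition of ${\cal T}_t$, the lemma is established.
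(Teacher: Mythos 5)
Your proof is correct and takes essentially the same route as the paper, which simply declares all six conditions trivially verified for ${\cal T}_t$. You add the useful observation that (C2)--(C6) are tautological because their antecedents already assume typability of the conclusion term, while (C1) alone requires a genuine (if routine) weakening-plus-$\ra_i$ argument.
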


\begin{proof}
We can observe that the conditions of Definition \ref{def:RBotgood} are trivially verified.
\end{proof}

In this section, we are going to prove a surprising result which is the following:
\begin{center}
{\bf The set ${\cal T}_t$ is the unique saturated set. }
\end{center}
We will see in sections 4 and 5 some applications of this powerful result.\\

Now, our goal is to define type interpretations by means of particular subsets of $\B$. To achieve this, it is essential to define our base sets that will serve as the images of type constants, or at the very least, to establish their desired properties. Additionally, we need to introduce an operation, denoted as $\leadsto$, for interpreting the arrow $\to$ in types. However, we must exercise caution in our constructions to ensure that we remain within the domain of typed terms.

The definition of $\leadsto$ provided below is quite standard and aligns with a common concept in realizability semantics: the terms in ${\cal{K}} \leadsto {\cal{L}}$  are those which map the terms of the set ${\cal{K}}$ into the set ${\cal{L}}$. The subtlety resides in formulating the definition so that it is explicitly restricted to typed terms, thereby ensuring that the interpretation remains entirely within the intended domain. Furthermore, our definition exhibits a form of polymorphism: a second interpretation of $\leadsto$ is required to account for the interpretation of the type constructor $\rightarrow$ in cases where $M : A \rightarrow B$ is a $\mu$-abstraction. Crucially, this second interpretation must take into account all initial segments of finite sequences, as this is essential for the proof of Theorem \ref{adq}. 

Following the definition of the $\leadsto$ operator, we outline the conditions imposed on subsets of $\B$ in Definition \ref{def:RBotsat}. These conditions are necessary for interpreting types and ensuring that the underlying subsets of $\B$ remain saturated. 

\begin{definition}[The $\leadsto$ operator]\label{def:leadsto}~
Let $\B$ be a saturated set and consider $\cal{K},\cal{L} \subseteq \B$, and  ${\cal{X}} \subseteq \B^{<\omega}$. 
We define two new subsets of $\B$ relative to $\B$:
$${\cal{K}} \leadsto_{\B} {\cal{L}} =\{ M \in  \B \,\, \mid \,\, \forall \, N \in {\cal{K}},  \, {\rm if} \,  (M) N \in {\cal T}_t, \, {\rm then} \, (M) N \in {\cal{L}}\},$$
$${\cal{X}} \leadsto_{\B} \B =\{M \in  \B \,\, \mid\,\,  \forall \, \bar{N} \in {\cal{X}}, \,  \forall \, \bar{P} \tre  \bar{N}, \, {\rm if} \,  (M)\bar{P} \in {\cal T}_t, \, {\rm then} \, (M)\bar{P} \in \B \}.$$ 
\end{definition}

\begin{definition}[$\B$-saturated sets]\label{def:RBotsat}
Let  $\B$ be saturated and ${\cal S} \subseteq \B$. We call ${\cal S}$ a $\B$-saturated set or, in short, $\B$-saturated if
\begin{enumerate}
\item[{\rm \bf (D1) :}] $\forall M,N \in {\cal T}_t$, $\forall \bar{P} \in {\cal T}_t^{<\omega}$,  if $N \in {\B}$,  $(\l x.M)N\bar{P} \in  {\cal T}_t$ and $(M[x:=N])\bar{P}\in {\cal S} $, then 
$(\l x.M)N\bar{P} \in  {\cal S}$.
\item[{\rm \bf (D2) :}] $\forall n\geq 0$,  $\forall N_1, \dots, N_n \in \B$, $\forall x \in {\cal V}_{\l}$, if  $(x)N_1 \dots N_n \in {\cal T}_t$, then $(x)N_1 \dots N_n \in {\cal S}$.
\item[{\rm \bf (D3) :}] $\exists {\cal X}_{\cal S} \subseteq \B^{<\omega}$, ${\cal S} = {\cal X}_{\cal S}  \leadsto_{\B} \B$. 
\end{enumerate}
\end{definition}

\begin{remark}~\rm
\begin{enumerate}
\item Condition (D1) indicates that $\mathcal{S}$ must be saturated concerning head $\beta$-reduction when the original term is an application. The requirement that $N \in \B$ is essential for this condition. However, we do not need a similar property for saturation with respect to head $\mu$-reduction.
\item Condition (D2) implies that, specifically, every $\l$-variable must belong to $\cal{S}$.
\item Condition (D3) plays a crucial role in allowing us to formulate the correctness theorem (Theorem \ref{adq}). It enables us to transition from an interpretation of a proof of contradiction based on the assumption $\neg A$ to an interpretation of a proof of $A$.
\item We note that, if $\B$ is a saturated set, it is also considered $\B$-saturated since $\B = \varnothing \leadsto_{\B} \B$.
\end{enumerate}
\end{remark}

The following remark can be seen as an easy exercise and will not be used in the rest of the paper.

\begin{remark}~\rm
Let  $\cal{K},\cal{L}$ be  $\B$-saturated, $M \in {\cal L}$ and $x \in {\cal V}_{\lambda}$  such that $x$ is not free in $M$, then, by {\rm (C1)}, $\l x.M \in \B$. We have $\forall N\in {\cal K}$, if $(\l x. M)N \in {\cal T}_t$, then $M[x:=N] = M \in {\cal L}$, thus, by {\rm (D1)},  $(\l x. M)N \in  {\cal L}$. Therefore  $\l x.M  \in {\cal{K}} \leadsto_{\B} {\cal{L}}$. 
This shows that we can create an infinite number of terms in ${\cal{K}} \leadsto_{\B} {\cal{L}}$. We will see in Lemma \ref{ModelSat} that 
 ${\cal{K}} \leadsto_{\B} {\cal{L}}$ contains  every $\l$-variable.
\end{remark}

In the sequel, we omit the subscript ${\B}$ and we simply write $\cal{K}\leadsto \cal{L}$ or $\cal{X}\leadsto \B$.\\

We can now introduce the concept of a $\B$-model, which comprises the sets used for interpreting types. Lemma \ref{ModelSat} is of paramount importance as it guarantees that the sets comprising a model maintain their property of being $\B$-saturated. As we shall see, we allow a $\B$-model to be additionally parameterized by a family of $\B$-saturated sets. Although we will not explore the relevance of this generalization in the present paper, we mention it to emphasize that, in a setting where one aims to establish properties other than normalization, such sets could also be used to interpret types and derive further properties of the calculus.

\begin{definition}[$\B$-model]
Let $\B$ be a saturated set and ${({\cal S}_i)}_{i \in I}$ a family of $\B$-saturated sets. 
A $\B$-model ${\cal M}$ is the smallest set defined by the following inductive steps.
\begin{enumerate}
\item $\B\in {\cal M}$ and, for all $i \in I$, ${\cal S}_i\in {\cal M}$.
\item If ${\cal U}$, ${\cal V}\in {\cal M}$, then ${\cal U}\leadsto{\cal V}\in {\cal M}$.
\end{enumerate}
That is, ${\cal M}$ denotes the smallest set that contains $\B$ and all ${\cal S}_i$, and is closed under the constructor $\leadsto$.
 We shall not explicitly mention the family ${({\cal S}_i)}_{i \in I}$ of $\B$-saturated sets as a parameter of the model, in order to avoid overloading the notation.
\end{definition}

It will turn out (notably in  Corollary \ref{closed}) that, for the correctness theorem to hold, it would be enough to consider models containing only the set $\B$. At this point, however, we do not know yet that ${\cal T}_t$ is the unique saturated set. 

\begin{lemma} \label{ModelSat}
Let ${\cal M}$ be a $\B$-model. Then the following are valid
\begin{enumerate}
    \item Let ${\cal U}, {\cal V}\in \mathcal{M}$ be $\B$-saturated. Then ${\cal U}\leadsto{\cal V}\in \mathcal{M}$ implies that ${\cal U}\leadsto{\cal V}$ is $\B$-saturated.
    \item Every element of $\mathcal{M}$ is $\B$-saturated.
\end{enumerate}
\end{lemma}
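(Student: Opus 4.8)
The plan is to prove the two statements in sequence, using the first to drive an induction that establishes the second. For part (1), I must verify that ${\cal U}\leadsto{\cal V}$ satisfies the three defining conditions (D1), (D2), (D3) of a $\B$-saturated set, given that both ${\cal U}$ and ${\cal V}$ are $\B$-saturated and that ${\cal U}\leadsto{\cal V}\in\mathcal{M}$. First I would check (D2): given $\l$-variables and terms $N_1,\dots,N_n\in\B$ with $(x)N_1\dots N_n\in{\cal T}_t$, I need $(x)N_1\dots N_n\in{\cal U}\leadsto{\cal V}$. Unfolding the definition of $\leadsto$, this means showing that for every $N\in{\cal U}$, if $((x)N_1\dots N_n)N\in{\cal T}_t$ then it lies in ${\cal V}$; but $((x)N_1\dots N_n)N=(x)N_1\dots N_n N$ is again a term headed by a variable applied to elements of $\B$ (using ${\cal U}\subseteq\B$), so this follows directly from (D2) applied to ${\cal V}$. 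Next I would check (D1): suppose $N\in\B$, $(\l x.M)N\bar P\in{\cal T}_t$, and $(M[x:=N])\bar P\in{\cal U}\leadsto{\cal V}$; I must show $(\l x.M)N\bar P\in{\cal U}\leadsto{\cal V}$. Again unfolding $\leadsto$, I take any $Q\in{\cal U}$ with $((\l x.M)N\bar P)Q\in{\cal T}_t$ and must place $((\l x.M)N\bar P)Q$ in ${\cal V}$. Writing $\bar P'=\bar P Q$, I have $(M[x:=N])\bar P'\in{\cal V}$ (since $(M[x:=N])\bar P\in{\cal U}\leadsto{\cal V}$ and $Q\in{\cal U}$), so (D1) for ${\cal V}$ delivers $((\l x.M)N\bar P)Q=(\l x.M)N\bar P'\in{\cal V}$, as required.

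The condition (D3) is the main obstacle, and I expect it to require the most care. I must exhibit a set ${\cal X}\subseteq\B^{<\omega}$ with ${\cal U}\leadsto{\cal V}={\cal X}\leadsto_{\B}\B$. Since ${\cal V}$ is $\B$-saturated, by (D3) there is some ${\cal X}_{\cal V}\subseteq\B^{<\omega}$ with ${\cal V}={\cal X}_{\cal V}\leadsto_{\B}\B$. The natural candidate is ${\cal X}=\{N\bar N\mid N\in{\cal U},\ \bar N\in{\cal X}_{\cal V}\}$, prefixing each sequence of ${\cal X}_{\cal V}$ by a term of ${\cal U}$. To verify ${\cal U}\leadsto{\cal V}\subseteq{\cal X}\leadsto_{\B}\B$, I take $M\in{\cal U}\leadsto{\cal V}$ and a sequence $N\bar N\in{\cal X}$ together with an initial segment $\bar P\tre N\bar N$; if $(M)\bar P\in{\cal T}_t$ I must show $(M)\bar P\in\B$. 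Here the role of the second, sequence-based interpretation of $\leadsto$ becomes essential: the condition must account for \emph{all} initial segments $\bar P$, which is precisely why ${\cal X}_{\cal V}$ quantifies over initial segments. If $\bar P$ is the empty segment then $(M)\bar P=M\in\B$ trivially; otherwise $\bar P=N\bar P'$ with $\bar P'\tre\bar N$, and since $(M)N\in{\cal V}$ (as $M\in{\cal U}\leadsto{\cal V}$ and $N\in{\cal U}$) while ${\cal V}={\cal X}_{\cal V}\leadsto_{\B}\B$, the membership $((M)N)\bar P'\in\B$ follows from the definition of ${\cal X}_{\cal V}\leadsto_{\B}\B$ applied to $\bar N\in{\cal X}_{\cal V}$ and $\bar P'\tre\bar N$. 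The reverse inclusion ${\cal X}\leadsto_{\B}\B\subseteq{\cal U}\leadsto{\cal V}$ is proved symmetrically, reading the same chain of equivalences in the other direction and taking care that the typability side conditions match up.

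For part (2), the plan is a straightforward induction on the inductive construction of $\mathcal{M}$. The base cases are that $\B$ itself is $\B$-saturated (noted in the excerpt via $\B=\varnothing\leadsto_{\B}\B$, with (D1), (D2) following from the saturation conditions (C5), (C4) on $\B$) and that each ${\cal S}_i$ is $\B$-saturated by hypothesis. For the inductive step, any element of $\mathcal{M}$ of the form ${\cal U}\leadsto{\cal V}$ arises from ${\cal U},{\cal V}\in\mathcal{M}$ that are, by the induction hypothesis, $\B$-saturated; part (1) then immediately gives that ${\cal U}\leadsto{\cal V}$ is $\B$-saturated. The only subtlety is ensuring the hypothesis ``${\cal U}\leadsto{\cal V}\in\mathcal{M}$'' required by part (1) is available, which it is, since we are examining an element of $\mathcal{M}$ of exactly this shape. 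I anticipate that the bookkeeping around the typability conditions $(\cdot)\in{\cal T}_t$ in each application of (D1)/(D2)/(D3) will be the most tedious part, but it should go through routinely by observing that all the relevant terms remain typable as applications of typable terms.
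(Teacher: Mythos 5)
Your argument follows the paper's proof almost step for step: the same unfolding of $\leadsto$ for (D1) and (D2), the same witness set $\{N\bar N \mid N\in{\cal U},\ \bar N\in{\cal X}_{\cal V}\}$ for (D3) with the same case split on whether the initial segment of $N\bar N$ is empty or of the form $N\bar S$, and the same induction on the number of $\leadsto$-steps for part (2). There is, however, one step you omit in both (D1) and (D2): by the definition of the $\leadsto$ operator, ${\cal U}\leadsto{\cal V}$ is by construction a \emph{subset of $\B$}, so to conclude $(\l x.M)N\bar P\in{\cal U}\leadsto{\cal V}$ (resp.\ $(x)N_1\dots N_n\in{\cal U}\leadsto{\cal V}$) it is not enough to verify the functional clause ``for all $Q\in{\cal U}$\dots''; you must first show that the term lies in $\B$, and this is not automatic. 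The paper obtains it from the saturation of $\B$ itself: since $(M[x:=N])\bar P\in{\cal U}\leadsto{\cal V}\subseteq\B$, condition (C5) gives $(\l x.M)N\bar P\in\B$, and condition (C4) gives $(x)N_1\dots N_n\in\B$ directly. You do invoke (C4) and (C5), but only in the base case of part (2) (to see that $\B$ is $\B$-saturated); they are equally needed inside part (1). For (D3) the corresponding membership is automatic because both sides of the claimed equality are subsets of $\B$ by definition, so no extra work is required there.

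A second, smaller point: the typability bookkeeping you defer is not quite ``applications of typable terms remain typable.'' In (D1) you need $(M[x:=N])\bar P\, Q\in{\cal T}_t$ from $(\l x.M)N\bar P\, Q\in{\cal T}_t$, which is an instance of subject reduction (Theorem \ref{SR}) since the former is a $\b$-reduct of the latter; the paper asserts this in the same place. With these two additions your proof coincides with the paper's.
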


\begin{proof}
Let ${\cal M}$ be a $\B$-model. We verify both points of the lemma in the order presented above.
\begin{enumerate}
\item Let ${\cal U},{\cal V} \in {\cal M}$ be $\B$-saturated such that ${\cal U}\leadsto{\cal V}\in \mathcal{M}$. We verify the three points of Definition \ref{def:RBotsat} for ${\cal U} \leadsto {\cal V}$.
\begin{itemize}
    \item[{\bf (D1)}] Let $M,N \in {\cal T}_t$ and $\bar{P} \in {\cal T}_t^{<\omega}$ such that $N \in \B$, $(\l x.M)N\bar{P} \in  {\cal T}_t$ and $(M[x:=N])\bar{P}\in {\cal U}\leadsto{\cal V}$.
    We will prove that $(\l x.M)N\bar{P} \in  {\cal U} \leadsto {\cal V}$.
    First, we have  $(M[x:=N])\bar{P}\in \B$ (by virtue of Definition \ref{def:leadsto}), then, by (C5), $(\l x.M)N\bar{P} \in  \B$.
    Let $Q \in {\cal{U}}$ such that $(\l x.M)N\bar{P}Q\in \mathcal{T}_t$, then $(M[x:=N])\bar{P}Q\in \mathcal{T}_t $. Since we have $(M[x:=N])\bar{P}\in \B$ and $(M[x:=N])\bar{P} \in {\cal U} \leadsto {\cal V}$, thus $(M[x:=N])\bar{P}Q\in \cal{V}$ and, by assumption, $(\l x.M)N\bar{P}Q \in \cal{V}$. Therefore $(\l x.M)N\bar{P} \in {\cal U} \leadsto {\cal V}$.  

    \item[{\bf (D2)}] Let $ N_1, \dots, N_n \in \B$ and $x \in {\cal V}_{\l}$ such that  $(x)N_1 \dots N_n \in {\cal T}_t$.  
    We will prove that $(x)N_1 \dots N_n  \in  {\cal U} \leadsto {\cal V}$. First, by  (C4), $(x)N_1 \dots N_n  \in  \B$.
    Assume $N \in \cal{U}$ and $(x)N_1 \dots N_nN \in {\cal T}_t$, then, since $\mathcal{U}$ and $\mathcal{V}$ are $\B$-saturated, $N \in \B$ and $(x)N_1 \dots N_nN \in \cal{V}$. 
    Therefore $(x)N_1 \dots N_n \in {\cal U} \leadsto {\cal V}$.  

    \item[{\bf (D3)}] Let  ${\cal X}_{\cal V} \subseteq \B^{<\omega}$ such that ${\cal V} = {\cal X}_{\cal V}  \leadsto \B$. By assumption, such an ${\cal X}_{\cal V}$ exists.
    Let  ${\cal U}{\cal X}_{\cal V}  = \{ N\bar{N} \, \mid \, N \in \cal{U}$ and $\bar{N} \in {\cal X}_{\cal V} \}$. We will verify ${\cal U} \leadsto {\cal V} =  \cal{U}{\cal X}_{\cal V} \leadsto \B$.
    First, we have $ \cal{U}{\cal X}_{\cal V} \subseteq \B^{<\omega}$.
    \begin{itemize}
        \item Let $M \in {\cal U} \leadsto {\cal V}$ and $\bar{P} = N \bar{Q}$ where $N \in \cal{U}$ and  $\bar{Q} \in {\cal X}_{\cal V}$. Let  $\bar{R} \tre \bar{P}$  such that $(M)\bar{R} \in {\cal T}_t$. If $\bar{R} = \varnothing$, then $(M)\bar{R} = M \in \B$.  If not,  $\bar{R} =  N \bar{S}$ where  $\bar{S} \tre \bar{Q}$.
        Since $(M)N \in {\cal T}_t$, $(M)N \in {\cal V} \subseteq \B$ and since $(M)N\bar{S} \in {\cal T}_t$ and $\bar{S}\tre\bar{Q}\in {\cal X}_{\cal V}$,  $(M)N\bar{S} \in \B$. 
        Therefore ${\cal U} \leadsto {\cal V} \subseteq  \cal{U}{\cal X}_{\cal V} \leadsto \B$.

        \item Let $M \in  \cal{U}{\cal X}_{\cal V} \leadsto \B$ and $N \in \cal{U}$ such that $(M)N \in {\cal T}_t$, then $(M)N \in \B$, since $N\tre N\bar{Q} \in \cal{U}{\cal X}_{\cal V}$. 
        Let  $\bar{P} \tre \bar{N} \in {\cal X}_{\cal V}$ such that $(M)N\bar{P} \in {\cal T}_t$, then $N\bar{P} \tre N \bar{N}$, thus $(M)N\bar{P} \in \B$. Hence, $(M)N \in {\cal X}_{\cal V} \leadsto \B=\cal{V}$. 
        Therefore $\cal{U}{\cal X}_{\cal V} \leadsto \B \subseteq {\cal U} \leadsto {\cal V}$.
    \end{itemize}
\end{itemize}
\item Let \({\cal S} \in {\cal M}\). We proceed by induction on the number of \(\leadsto\)-steps in the construction of \({\cal S}\). If \({\cal S} = \B\) or \({\cal S} = {\cal S}_i\), the statement holds trivially. Now, assume \({\cal S} = {\cal U} \leadsto {\cal V}\) for some \({\cal U}, {\cal V} \in {\cal M}\). By IH, both \({\cal U}\) and \({\cal V}\) are \(\B\)-saturated. The conclusion then follows from the first point of the lemma.
\end{enumerate}
\end{proof}

We have now reached a pivotal concept in grasping the essence of these kind of reducibility candidates. When attempting to model the equation $A = \neg \neg A$, rather than relying on fixed-point operators, we embrace the realizability semantics proposed by Parigot. Specifically, he addressed the challenge of deriving the interpretation of $A$ from that of $\neg \neg A$ by ensuring that every element $\mathcal{S}$ within $\mathcal{M}$ can be represented as $\mathcal{S}^\perp \leadsto \B$ for some set $\mathcal{S}^\perp$. Intuitively, we can consider $\mathcal{S}^\perp$, referred to as the orthogonal of $\mathcal{S}$, as the interpretation of the negation of a type. This concept enables us to effectively incorporate classical logic without the need for fixed-point operators.\\

In the subsequent definition, we provide the weakest precondition $\mathcal{W}$ for a set $\mathcal{S}\in  \mathcal{M}$ such that $\mathcal{S}=\mathcal{W}\leadsto \B$ holds. Following Parigot \cite{Par4}, we adopt the notation $\mathcal{S}^{\B}$ for $\mathcal{W}$. The fact that the $\mu$-operator does not vanish during a reduction is reflected in the second item of Definition \ref{def:leadsto}. In Definition \ref{def:leadsto}, however, we focus on the functional part of a $\mu$-redex rather than its arguments, since, in the proof of the adequacy lemma, we must reconstruct the candidate for the functional part from those of the arguments. This reconstruction will be made possible by Lemma \ref{orth2}. Our definition of  $\mathcal{S}^{\B}$ provides somehow ``the reverse'' notion compared to Definition \ref{def:leadsto}: we focus on the finite sequences, ``the orthogonals'', that enable us to verify the adequacy property also for the $\m$-abstraction. For the properties we aim to establish, it is vital that the empty list be included in the orthogonal set of the interpretation of each type. Therefore, we incorporate it into $\mathcal{S}^{\B}$ in the subsequent definition.

\begin{definition}[Weakest precondition]\label{def:weakestprecondition} 
Let ${\cal M}$ be a $\B$-model, and ${\cal S} \in  {\cal M} $. We write 
$${\cal{S}}^{\B} =\left(\bigcup\{{\cal{X}} \subseteq \B^{<\omega} \,\, \mid \,\, \cal{S}= \cal{X} \leadsto \B \}\right) \cup \{ \varnothing \}.$$
\end{definition}

The next lemma demonstrates that $\cal{S}^{\B} \leadsto\B$ coincides with $\cal{S}$, hence, $\cal{S}^{\B}$ does indeed describe the weakest condition under which can ensure this property. 

\begin{lemma} \label{orth2}
Let ${\cal M}$ be a $\B$-model, and ${\cal S} \in  {\cal M}$. We have ${\cal S} = {\cal S}^{\B} \leadsto \B$.
\end{lemma}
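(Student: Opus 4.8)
The plan is to prove the two inclusions $\mathcal{S}^{\B} \leadsto \B \subseteq \mathcal{S}$ and $\mathcal{S} \subseteq \mathcal{S}^{\B} \leadsto \B$ separately, resting on one elementary monotonicity observation about $\leadsto \B$ together with the nonemptiness of the family defining $\mathcal{S}^{\B}$. First I would note that, since $\mathcal{S} \in \mathcal{M}$, Lemma \ref{ModelSat} guarantees that $\mathcal{S}$ is $\B$-saturated, so condition (D3) of Definition \ref{def:RBotsat} supplies some $\mathcal{X}_{\mathcal{S}} \subseteq \B^{<\omega}$ with $\mathcal{S} = \mathcal{X}_{\mathcal{S}} \leadsto \B$. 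Hence the family $\mathcal{F} = \{\mathcal{X} \subseteq \B^{<\omega} \mid \mathcal{S} = \mathcal{X} \leadsto \B\}$ is nonempty, and by Definition \ref{def:weakestprecondition} we have $\mathcal{X}_{\mathcal{S}} \subseteq \mathcal{S}^{\B}$.

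The key elementary fact I would record is the anti-monotonicity of the second $\leadsto$ operator of Definition \ref{def:leadsto}: if $\mathcal{X} \subseteq \mathcal{Y} \subseteq \B^{<\omega}$, then $\mathcal{Y} \leadsto \B \subseteq \mathcal{X} \leadsto \B$, since enlarging the set of sequences only adds constraints of the form ``$(M)\bar{P} \in \mathcal{T}_t \Rightarrow (M)\bar{P} \in \B$'' on a candidate $M$. Applying this to $\mathcal{X}_{\mathcal{S}} \subseteq \mathcal{S}^{\B}$ gives at once $\mathcal{S}^{\B} \leadsto \B \subseteq \mathcal{X}_{\mathcal{S}} \leadsto \B = \mathcal{S}$, which settles the first inclusion.

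For the reverse inclusion $\mathcal{S} \subseteq \mathcal{S}^{\B} \leadsto \B$, I would take $M \in \mathcal{S}$ (so in particular $M \in \B$, as $\mathcal{S} \subseteq \B$) and verify the defining condition of $\mathcal{S}^{\B} \leadsto \B$ directly. Fix $\bar{N} \in \mathcal{S}^{\B}$ and $\bar{P} \tre \bar{N}$ with $(M)\bar{P} \in \mathcal{T}_t$; I must show $(M)\bar{P} \in \B$. If $\bar{N} = \varnothing$, then $\bar{P} = \varnothing$ and $(M)\bar{P} = M \in \mathcal{S} \subseteq \B$. Otherwise $\bar{N}$ belongs to some witness $\mathcal{X} \in \mathcal{F}$, and since $M \in \mathcal{S} = \mathcal{X} \leadsto \B$, the defining condition of $\mathcal{X} \leadsto \B$ applied to this $\bar{N}$ and $\bar{P}$ yields $(M)\bar{P} \in \B$. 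Thus $M \in \mathcal{S}^{\B} \leadsto \B$.

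The only genuinely delicate point — and the place where building $\mathcal{S}^{\B}$ as a union over all witnesses really matters — is this reverse inclusion: one must check that every $\bar{N}$ occurring in $\mathcal{S}^{\B}$, together with all its initial segments $\bar{P} \tre \bar{N}$, is already controlled by the mere membership $M \in \mathcal{S}$. The union structure, plus the explicitly adjoined $\{\varnothing\}$, reduces this to a routine case split rather than a real obstacle, while the nonemptiness of $\mathcal{F}$ — needed both to provide the witness $\mathcal{X}_{\mathcal{S}}$ in the first inclusion and to ensure the family is not vacuous — is precisely what Lemma \ref{ModelSat} delivers. No further technical lemma should be required.
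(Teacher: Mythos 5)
Your proof is correct and follows essentially the same route as the paper's: the paper likewise handles the inclusion $\mathcal{S}\subseteq\mathcal{S}^{\B}\leadsto\B$ by the case split on $\bar{N}=\varnothing$ versus $\bar{N}$ lying in a witness $\mathcal{X}_0$, and obtains the reverse inclusion from $\mathcal{X}_0\subseteq\mathcal{S}^{\B}$ via the same anti-monotonicity of $\leadsto\B$ that you state explicitly. The only difference is presentational: you spell out the appeal to Lemma \ref{ModelSat} and condition (D3) for the existence of a witness, which the paper leaves implicit.
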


\begin{proof} 
\begin{itemize}
\item Assume $M \in {\cal S} $ and $\bar{P} \tre \bar{N} \in \cal{S}^\perp $ such that $(M)\bar{P} \in {\cal T}_t$.
\begin{itemize}
\item If $\bar{N} = \varnothing$, then $\bar{P} = \varnothing$ and $(M)\bar{P}  = M \in {\cal S}  \subseteq \B$. 
\item If  $\bar{N}\in {\cal X}_0$ for some ${\cal X}_0$ with  ${\cal S}= {\cal X}_0 \leadsto \B$, we have, by definition, $(M)\bar{P}\in \B$. 
\end{itemize}
\item Let ${\cal X}_0 \subseteq \B^{<\omega}$ such that  ${\cal S} = {\cal X}_0  \leadsto \B$. 
Since $ {\cal X}_0  \subseteq {\cal S}^{\B}$, then  ${\cal S}^{\B} \leadsto \B \subseteq {\cal X}_0  \leadsto \B = {\cal S}$.
\end{itemize}
\end{proof}

We can now interpret the types in a $\B$-model. Note that the interpretation of the type $\bot$ will be the whole set $\B$.

\begin{definition}[Interpretation of types] 
Let ${\cal M}$ be a $\B$-model.
An $\mathcal{M}$-interp-retation $\cal{I}$ is a function $X \mapsto {\cal{I}}(X)$ from the set of atomic types $\mathcal{V}_{\mathfrak{T}}$ to $\mathcal{M}$ which we extend for any type formula as follows: ${\cal{I}}(\bot)=\B$ and
${\cal{I}}(A \to B)= {\cal{I}}(A) \leadsto {\cal{I}}(B)$.
\end{definition}

We are now prepared to state and prove the generalized correctness theorem. This theorem establishes a connection between the concept of typability and interpretation within a model. In simple terms, it asserts that any $\lambda\mu$-term $M$ of type $A$ belongs to the interpretation of $A$ in any model. Furthermore, given that the interpretation of any type is contained in $\B$, it follows that the $\lambda\mu$-term $M$ will also be in $\B$. Consequently, a well-chosen set $\B$ enables us to uncover properties of typable terms in relation to a specific reduction.

\begin{definition}[Simultaneous substitution]
Let $M,M_1,\dots ,M_n \in \cal{T}$, $\bar{N}_1,\dots$ $,\bar{N}_m \in \mathcal{T}^{<\omega}$, and $\s $ be the simultaneous substitution 
$[(x_i:=M_i)_{1 \le i\le n};(\a_j:=_r\bar{N}_j)_{1 \le j \le m}]$, which is not an object of the syntax. 
Then $M\s$ is obtained from the $\l\m$-term $M$ by replacing each $x_i$ by $M_i$ and replacing inductively each subterm of the form
$[\a_j]U$ in $M$ by $[\a_j](U)\bar{N}_j$. We explicitly state only the cases in which the specific behavior of a \(\mu\)-substitution arises. Namely, when \( M \) is of the form \([\alpha]U\) for some \(\alpha\) and \(U\). The remaining cases proceed in the usual inductive manner and here are the two important cases:
\begin{itemize}
\item If $M = [\alpha]U$ and $\alpha \neq \alpha_j$, then $M\s = [\alpha]U\s$.
\item If $M= [\alpha_j]U$, then $M\s = [\alpha_j](U\s)\bar{N}_j$.
\end{itemize}
\end{definition}



The following result establishes a connection between typing and semantics. Intuitively, it states that a typable term always belongs to its interpretation in a model. Of course, one must properly handle the free variables of the term by substituting them with appropriate terms. It is worth noting that the statement of the theorem relies on a set of substantial assumptions. In particular, initial subsequences of sequences are included in ${\cal I}(B_j)^{\B}$, and the term $M\s$ is assumed to be typable.
A careful examination of the proof shows that these assumptions are indispensable for the current approach.

\begin{theorem} [General correctness theorem] \label{adq} $\;$\\
Let  ${\cal M}$  be a $\B$-model,
${\cal{I}}$ an $\mathcal{M}$-interpretation, \\
$\G =\{x_i : A_i\}_{1\leq i\leq n}$, 
$\Theta =\{\a_j: B_j\}_{1\leq j\leq m}$,\\
$M_i \in \;{\cal{I}}(A_i)$ for all  $1\leq i\leq n$, 
$\bar{P_j} \tre \bar{N_j} \in\; ({\cal{I}}(B_j))^{\B}$ for all $1\leq j\leq m$ and \\
$\sigma =[(x_i:=M_i)_{1 \leq i \leq n};(\a_j:=_r\bar{P_j})_{1\leq j\leq m}]$.\\
If $\Gamma \vdash M:A;\Theta$ and $M\sigma \in {\cal T}_t$ , then $M\sigma \in {\cal{I}}(A)$.
\end{theorem}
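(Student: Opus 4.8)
The plan is to proceed by induction on the derivation of $\G \vdash M : A; \Theta$, with one case per typing rule. Two facts will be used throughout. First, by Lemma \ref{ModelSat} every set ${\cal I}(C)$ is $\B$-saturated, and by Lemma \ref{orth2} it satisfies ${\cal I}(C) = ({\cal I}(C))^{\B} \leadsto \B$; these are what let us pass between the functional reading of a type and its ``orthogonal'' reading. Second, all the typability side conditions needed to invoke the induction hypothesis (that the relevant subterms and reducts lie in ${\cal T}_t$) are discharged using Theorem \ref{SR} together with the elementary observation that an immediate subterm of a typable term, sitting in a typing position, is itself typable. The hypothesis $M\sigma \in {\cal T}_t$ is precisely what powers these side conditions.

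The two routine cases are the axiom and the application rule. For the axiom $M = x_i$ we have $M\sigma = M_i \in {\cal I}(A_i) = {\cal I}(A)$ by assumption. For $\ra_e$, writing $M = (P)Q$ with $P : C \ra A$ and $Q : C$, the induction hypothesis gives $P\sigma \in {\cal I}(C) \leadsto {\cal I}(A)$ and $Q\sigma \in {\cal I}(C)$; since $(P\sigma)(Q\sigma) = M\sigma \in {\cal T}_t$, the definition of $\leadsto$ yields $M\sigma \in {\cal I}(A)$. For $\ra_i$, with $M = \l x.M'$ and $A = C \ra D$, one shows $\l x.(M'\sigma) \in {\cal I}(C) \leadsto {\cal I}(D)$ in two steps: membership in $\B$ follows by applying the induction hypothesis to $M'$ with $\sigma$ extended by $x := x$ (legitimate since $x \in {\cal I}(C)$ by {\rm (D2)}), giving $M'\sigma \in {\cal I}(D) \subseteq \B$ and hence $\l x.(M'\sigma) \in \B$ by {\rm (C1)}; and the functional property follows, for $N \in {\cal I}(C)$ with $(\l x.(M'\sigma))N \in {\cal T}_t$, by applying the induction hypothesis with $x := N$ to get $(M'\sigma)[x:=N] \in {\cal I}(D)$ and then closing under head $\b$-reduction via {\rm (D1)} (empty tail, using $N \in \B$).

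For $\bot_i$, with $M = [\a_j]M'$ and $A = \bot$, the substitution gives $M\sigma = [\a_j]\big((M'\sigma)\bar{P_j}\big)$. The induction hypothesis gives $M'\sigma \in {\cal I}(B_j) = ({\cal I}(B_j))^{\B} \leadsto \B$; since $\bar{P_j} \tre \bar{N_j} \in ({\cal I}(B_j))^{\B}$ and $(M'\sigma)\bar{P_j}$ is typable, the second clause of Definition \ref{def:leadsto} yields $(M'\sigma)\bar{P_j} \in \B$, and {\rm (C3)} then gives $M\sigma \in \B = {\cal I}(\bot)$. The crux is $\bot_e$, with $M = \m\a.M'$ obtained from $\G \vdash M' : \bot; \Theta, \a:A$. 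Using ${\cal I}(A) = ({\cal I}(A))^{\B} \leadsto \B$, it suffices to prove $(\m\a.(M'\sigma))\bar{P} \in \B$ for every $\bar{P} \tre \bar{N}$ with $\bar{N} \in ({\cal I}(A))^{\B}$ and $(\m\a.(M'\sigma))\bar{P} \in {\cal T}_t$ (the case $\bar{P} = \varnothing$, always admissible since $\varnothing \in ({\cal I}(A))^{\B}$ by Definition \ref{def:weakestprecondition}, supplies the required $\B$-membership). The idea is to feed exactly this $\bar{P}$ into the orthogonal slot of $\a$: as $\bar{P} \tre \bar{N} \in ({\cal I}(A))^{\B}$, the substitution $\sigma' = \sigma \cup [\a :=_r \bar{P}]$ is admissible for the context $\Theta, \a:A$, so the induction hypothesis gives $M'\sigma' = (M'\sigma)[\a:=_r\bar{P}] \in {\cal I}(\bot) = \B$ (the identity $M'\sigma' = (M'\sigma)[\a:=_r\bar P]$ being a routine composition-of-substitutions check). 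Condition {\rm (C2)} lifts this to $\m\a.\big((M'\sigma)[\a:=_r\bar{P}]\big) \in \B$, and since $(\m\a.(M'\sigma))\bar{P} \two_{\m} \m\a.\big((M'\sigma)[\a:=_r\bar{P}]\big)$ with every component of $\bar{P}$ in $\B$, condition {\rm (C6)} delivers $(\m\a.(M'\sigma))\bar{P} \in \B$.

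The principal obstacle is precisely this $\bot_e$ case, and it explains the elaborate bookkeeping in the statement of the theorem. One must match the externally supplied argument sequence $\bar{P}$ against the internal $\m$-substitution $[\a:=_r\bar{P}]$ generated by $\m$-reduction; this forces the orthogonal set $({\cal I}(A))^{\B}$ to be closed under initial subsequences (hence the relation $\tre$) and to contain $\varnothing$, which is exactly what the two-clause Definition \ref{def:leadsto} and the adjunction of $\{\varnothing\}$ in Definition \ref{def:weakestprecondition} provide. The use of {\rm (C6)} is indispensable and is the structural reason the $\m$-case differs from the $\b$-case: since the $\m$-operator does not disappear under reduction, the saturation condition must be phrased for whole argument sequences rather than a single head reduction. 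I expect everything else to be bookkeeping, with the only recurring care being the verification, via Theorem \ref{SR}, that each term to which the induction hypothesis is applied indeed lies in ${\cal T}_t$.
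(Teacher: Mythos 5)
Your proposal is correct and follows essentially the same route as the paper's proof: induction on the typing derivation, with the same use of (C1)/(D1)/(D2) in the $\ra_i$ case, Lemma \ref{orth2} and (C3) in the $\bot_i$ case, and the combination of the empty-sequence case, (C2), and (C6) in the $\bot_e$ case. The only cosmetic difference is that you invoke subject reduction explicitly to discharge the typability side conditions, where the paper leaves this implicit.
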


\begin{proof} 
By induction on the  derivation, we consider the last rule used.
\begin{itemize}

\item[$ax:$] In this case, $M = x_i$, $A = A_i$, $\Gamma \vdash x_i:A_i;\Theta$ and  $M \s = \si(x_i) = M_i \in  {\cal T}_t$ . By hypothesis, $M= M_i \in {\cal I}(A_i) =  {\cal I}(A)$.

\item[$\ra_i:$] In this case, $M = \l x.N$ and $A = B\ra C$, $\Gamma, x : B \vdash N : C;\Theta$ and  $M \s = \l x.N\s \in {\cal T}_t$.
Since $x\in {\cal I}(B)$ and $N\s = N\s'$ where $\s' = \s + [x:=x]$, we obtain, by IH, $N\s  \in {\cal I}(C)\subseteq \B$, then (C1)  involves $M\s \in \B$. 
Let $P \in {\cal I}(B)$ such that $(M\s)P \in {\cal T}_t$.  We have $(M\s)P  \ra_{\b} N\s''$ where $\s'' = \s + [x:= P]$, then $N \s'' \in {\cal T}_t$.
By IH, $N\s'' \in {\cal I}(C)$ and $P \in \B$, then, by (D1), $(M\si)P\in {\cal I}(C)$. Therefore $M\s \in {\cal I}(B)\leadsto {\cal I}(C)={\cal I}(A)$.  

\item[$\ra_e:$] In this case, $M = (N)P$, $\G \v P: A;\Theta$, $\G \v N: B\ra A;\Theta$, and $M\s = (N\s)P\s \in {\cal T}_t$. Then 
$N\s,P\s \in {\cal T}_t$ and, by IH,  $N\si \in {\cal I}(B)\leadsto {\cal I}(A)$ and $P\si \in{\cal I}(B)$. This means $M\si = (N\si)P\si\in {\cal I}(A)$.

\item[$\bot_i:$]  In this case, $M = [\a_j]N$, $A=\bot$, $\Gamma\v N:B_j;\Theta $ and let $\bar{P_j} \tre \bar{N_j} \in {\cal I}(B_j)^{\B}$ be such that $M\s = [\a_j](N\s)\bar{P_j} \in {\cal T}_t$. Then $N\s ,  (N\s)\bar{P_j} \in {\cal T}_t$ and,  by IH, $N\si\in {\cal I}(B_j)$  and, since  $\bar{P_j} \tre \bar{N_j} \in {\cal I}(B_j)^{\B}$, then  $(N\si)\bar{P_j}\in \B$. Therefore, by (C3), $M\si=[\a_j](N\si)\bar{P_j}\in \B ={\cal I}(\bot) $.

\item[$\bot_e:$] In this case, $M=\mu \a.N$, $\Gamma\vdash N:\bot;\, \a:A;\Theta'$ and $M\s = \m \a.N\s \in {\cal T}_t$. Since $\varnothing  \in {\cal I}(A)^{\B}$ and $N\s = N\s'$ where 
$\s' = \si + [\a :=_r \varnothing]$, we obtain, by IH, $N\s \in {\cal I}(\bot) = \B$.
Let $\varnothing  \neq \bar{P} \tre {\bar N}\in {\cal I}(A)^{\B}$ such that $(M\s){\bar P} \in {\cal T}_t$.  We have $(M\s){\bar P} \two_{\m} \mu \a. N\s''$ where  $\s'' = \si + [\a :=_r {\bar P}]$, then $\m\a.N\s''$, $N\s'' \in {\cal T}_t$.  By  IH, $N\s'' \in \B=\mathcal{I}(\bot)$ and, by (C2), $\m\a. N\s'' = \m\a.N\s[\a:=_r {\bar P}]\in \B$, hence, by (C6), $(M\si){\bar P}  \in \B$. Therefore $M\si=\m\a.N\si\in {\cal I}(A)$.
\end{itemize}
\end{proof}

We can now state and prove the main result of this section.

\begin{corollary}\label{closed}
 For every saturated set $\B$ we have $\B = {\cal T}_t$, i.e., ${\cal T}_t$ is the unique saturated set.
\end{corollary}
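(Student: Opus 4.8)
The plan is to derive the statement almost directly from the General Correctness Theorem (Theorem~\ref{adq}), using a substitution engineered to act as the identity. Recall that ${\cal T}_t$ has already been shown to be saturated, and that every saturated set $\B$ satisfies $\B \subseteq {\cal T}_t$ by Definition~\ref{def:RBotgood}. Hence the only thing left to prove is the reverse inclusion ${\cal T}_t \subseteq \B$. So I would fix an arbitrary typable term $M$, with a derivation $\G \vdash M : A ; \Theta$ where $\G = \{x_i : A_i\}_{1 \le i \le n}$ and $\Theta = \{\a_j : B_j\}_{1 \le j \le m}$, and aim to conclude $M \in \B$.

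First I would fix any $\B$-model ${\cal M}$ containing $\B$ together with any $\mathcal{M}$-interpretation ${\cal I}$ (for instance the one sending every atomic type to $\B$); the precise choice is irrelevant, since by construction ${\cal I}(C) \subseteq \B$ for every type $C$. The central idea is to instantiate the data required by Theorem~\ref{adq} so that the simultaneous substitution $\sigma$ collapses to the identity on $M$. Concretely I would take $M_i = x_i$ for each $\l$-variable and $\bar{P_j} = \bar{N_j} = \varnothing$ for each $\m$-variable. With this choice $\sigma = [(x_i := x_i)_{1\le i\le n} ; (\a_j :=_r \varnothing)_{1\le j\le m}]$ replaces each $x_i$ by itself and each subterm $[\a_j]U$ by $[\a_j](U\sigma)\varnothing = [\a_j]U\sigma$, so that $M\sigma = M$. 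In particular $M\sigma = M \in {\cal T}_t$, which is exactly the typability hypothesis demanded by the theorem.

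It then remains to check the two semantic side conditions. For the $\m$-variables, $\varnothing \tre \bar{N_j} = \varnothing$ and $\varnothing \in ({\cal I}(B_j))^{\B}$ holds by Definition~\ref{def:weakestprecondition}, which explicitly adjoins the empty sequence to every weakest precondition. For the $\l$-variables I must verify $x_i \in {\cal I}(A_i)$: by Lemma~\ref{ModelSat} each ${\cal I}(A_i)$ is $\B$-saturated, and applying condition (D2) with $n = 0$ shows that any $\l$-variable lying in ${\cal T}_t$ belongs to ${\cal I}(A_i)$; since a single variable is trivially typable, this is satisfied. All hypotheses of Theorem~\ref{adq} being met, I conclude $M = M\sigma \in {\cal I}(A) \subseteq \B$, whence ${\cal T}_t \subseteq \B$ and finally $\B = {\cal T}_t$.

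The step I expect to be the genuine crux is recognizing \emph{why} Theorem~\ref{adq} is strong enough to yield this: the empty sequence $\varnothing$ plays a double role, serving simultaneously as the substituend for every $\m$-variable and as an admissible member of every orthogonal $({\cal I}(B_j))^{\B}$. It is precisely this feature that lets $\sigma$ degenerate to the identity and so converts a correctness statement into a membership statement $M \in \B$. Once this is seen, the remaining verifications are routine, with (D2) supplying the free variables and Lemma~\ref{ModelSat} guaranteeing saturation of the interpretations; the apparent surprise of the result is entirely absorbed into the careful formulation of the correctness theorem.
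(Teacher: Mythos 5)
Your proof is correct and follows essentially the same route as the paper: both apply Theorem~\ref{adq} with the identity substitution $[(x_i:=x_i);(\a_j:=_r\varnothing)]$, using the fact that $\varnothing$ always lies in $({\cal I}(B_j))^{\B}$ and that the variables $x_i$ lie in ${\cal I}(A_i)$ (which you justify, correctly, via (D2) and Lemma~\ref{ModelSat}), to conclude $M\in{\cal I}(A)\subseteq\B$. The only cosmetic difference is that the paper fixes the specific model containing only $\B$ with ${\cal I}(X)=\B$ for all atomic $X$, whereas you observe that any choice works.
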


\begin{proof} 
It suffices to check that if $\G \vdash M:A;\Theta$ and $\B$ is  a saturated set, then $M \in \B$.
Assume $\G =\{x_i : A_i\}_{1\leq i\leq n}$ and $\Theta =\{\a_j: B_j\}_{1\leq j\leq m}$.
Let  ${\cal M} $ be the $\B$-model containing only the set $\B$ and 
${\cal{I}}$ the $\mathcal{M}$-interpretation defined by $\cal{I}$$(X) = \B$ for all $X \in \mathcal{V}_{\mathfrak{T}}$. Since $x_i \in {\cal{I}}(A_i)$ for all $1 \leq i \leq n$ and
$\varnothing  \in {\cal{I}}(B_j))^{\B}$ for all $1 \leq j \leq m$, then,  by the general correctness lemma and since $M \in {\cal T}_t$, $M = M[(x_i:=x_i)_{1 \leq i \leq n};(\a_j:=_r \emptyset)_{1\leq r\leq j}] \in {\cal I}(A)\subseteq \B$.
\end{proof}

$\;$



The preceding result shows that, interestingly, the set of typable $\lambda\mu$-terms is, in fact, the only set of terms satisfying the conditions that define a saturated set in the sense of Definition \ref{def:RBotgood}. In other words, to prove that a certain set of typable $\lambda\mu$-terms satisfies a given property, it suffices to consider the set of terms satisfying that property and to show that this set is saturated. In Sections 4 and 5, we explore applications of this result.

\section{Strong normalization property of $\b\m\r\e\theta$-reduction}

In this section, we demonstrate that $\b\m$-reduction enjoys the strong normalization property when augmented with the rules $\r,\e$, and $\theta$, i.e., all the mentioned rules except for $\mu'$. 
This result is not new: Parigot originally established it for his calculus, and de Groote later confirmed it for his own variant. What distinguishes our approach is the methodology employed. Rather than proving strong normalization for a restricted subset of rules and subsequently extending the result via commutation arguments, we directly verify the property by considering the full set of reduction rules. Specifically, we take as our base set the set of typable terms that are strongly normalizable with respect to all the underlying rules. We then demonstrate that this set satisfies all the necessary conditions to be considered saturated.

Accordingly, we define ${\cal R} = \{\beta, \mu, \rho, \theta, \eta\}$ and set $\B = \mathcal{SN}_{\cal R} \cap \mathcal{T}_t$. Our goal is to verify that $\B$ is saturated. By Corollary \ref{closed}, this will establish that every typable term is strongly normalizable with respect to $\beta\mu\rho\theta\eta$-reduction. In the following discussion, we adopt the abbreviation $M[\alpha := N]$ for $M[\alpha :=_r N]$, as $\mu'$-reduction is not considered in this context. We then proceed to verify that the conditions specified in Definition 
\ref{def:RBotgood} are satisfied.\\

First of all, we state a few lemmas.

\begin{lemma}\label{triva}
Let $M$, $N\in {\cal T}$ and $\a, \b, \g\in {\cal V}_\m$ such that $\a\neq \b, \g$. 
Then $(M[\b:=\g])_{\a} = M_{\a}[\b:=\g]$, $(M[x:=N])_{\a} = M_{\a}[x:=N_\a]$,  $(M[\b:=N])_{\a} = M_{\a}[\b:=N_\a]$, and  $(M_\a)_\b = (M_\b)_\a$.
\end{lemma}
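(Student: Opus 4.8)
The plan is to prove all four identities by a single structural induction on the term $M$, treating the four equations in parallel since they share the same case analysis. Each identity asserts that a certain substitution commutes with the $\a$-translation $M_\a$ (which deletes the tag $\a$ from every subterm $[\a]N$), under the hypothesis $\a \neq \b, \g$. The key observation organizing the proof is that the $\a$-translation and every substitution here are both defined by the same syntactic recursion through the constructors $x$, $\l x.-$, $(-)(-)$, $\m\b.-$, and $[\b]-$, so in all the ``generic'' cases the two operations simply pass through the constructor and the result follows immediately from the induction hypothesis. The only interesting cases are those involving the bracket constructor $[\gamma']-$, where the behavior of $M_\a$ bifurcates depending on whether $\gamma' = \a$ or not.

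First I would fix the inductive skeleton. For the variable case $M = x$, all four identities are trivial: both sides reduce to $x$ (or to $x$ with the $\l$-substitution applied, which is immediate). For $M = \l x.M'$, $M = (P)Q$, and $M = \m\b'.M'$, the $\a$-translation and each substitution commute with the constructor, so I apply the IH to the immediate subterms and reassemble; here I must use the convention (stated in Definition~\ref{trans} and the substitution definitions) that bound variables are renamed to avoid capture, so that the side conditions $\a \neq \b,\g$ propagate correctly and no variable in $N$ is captured. The load-bearing case is $M = [\gamma']M'$. When $\gamma' = \a$, the translation $M_\a$ deletes the tag and yields $(M')_\a$, so I must check that deleting $\a$ \emph{before} versus \emph{after} the substitution yields the same term --- this is where the hypothesis $\a \neq \b, \g$ is essential, since it guarantees that none of the substitutions $[\b:=\g]$, $[x:=N]$, $[\b:=N]$ touches the tag $\a$ itself, and in particular that $\a$-translation and the given substitution do not interfere. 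When $\gamma' \neq \a$, the tag survives, the substitution acts on $M'$ (possibly also on the tag in the $[\b:=\g]$ case, but only if $\gamma' = \b$, still $\neq \a$), and the IH closes the case.

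For the fourth identity $(M_\a)_\b = (M_\b)_\a$, the same induction applies, with the critical cases being $M = [\a]M'$ and $M = [\b]M'$. Since $\a \neq \b$, applying the two translations in either order deletes both tags, and the bracket case where $\gamma' \notin \{\a,\b\}$ passes through cleanly by IH; the point is simply that the two deletion operations act on disjoint sets of tags and hence commute. I expect the main obstacle to be purely bookkeeping rather than conceptual: one must be scrupulous about the variable-renaming convention and about exactly which substitution definition clause applies in the nested bracket cases, and one must verify that the side conditions $\a \neq \b, \g$ are invoked at precisely the right spot in the $[\a]M'$ case of each equation. Because these are all ``commutation of syntactic operations'' lemmas with no reduction or typing content, no deeper machinery is needed --- the proof is a routine but careful structural induction, and I would simply note that the generic cases are immediate and spell out only the bracket cases in detail.
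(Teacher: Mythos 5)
Your proposal is correct and follows the same route as the paper, which simply proves all four identities by a straightforward structural induction on $M$; your additional care about the bracket cases $[\a]M'$ versus $[\gamma']M'$ with $\gamma'\neq\a$, and about where the hypothesis $\a\neq\b,\g$ is used, is exactly the bookkeeping the paper leaves implicit. No gap.
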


\begin{proof}
All the cases follow by a straightforward induction on $M$.
\end{proof}

\begin{lemma} Let $M\in {\cal T}$ and $\a,\b\in {\cal V}_{\m}$. Then $M\ra_{\cal R} N$ iff $M[\a:=\b]\ra_{\cal R} N[\a:=\b]$. 
\end{lemma}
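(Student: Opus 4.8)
The statement asserts that renaming a free $\mu$-variable (via the $\rho$-style substitution $[\alpha:=\beta]$) commutes with one-step reduction in both directions. My plan is to prove this by structural induction on the reduction $M \ra_{\cal R} N$, tracking where the contracted redex sits. The substitution $[\alpha:=\beta]$ simply replaces free occurrences of $\alpha$ by $\beta$, so it should commute with every redex contraction; the content of the proof is checking this compatibility redex by redex for each rule in ${\cal R}=\{\beta,\mu,\rho,\theta,\eta\}$, and then propagating through the congruence (contextual) closure.

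\textbf{Forward direction ($M\ra_{\cal R}N$ implies $M[\alpha:=\beta]\ra_{\cal R}N[\alpha:=\beta]$).} First I would reduce to the case where the redex is at the root, since the inductive cases for the term constructors ($\lambda x.{-}$, application, $\mu\gamma.{-}$, $[\gamma]{-}$) follow immediately once the head case is handled: applying $[\alpha:=\beta]$ distributes over these constructors, so a reduction inside a subterm maps to a reduction inside the corresponding subterm of $M[\alpha:=\beta]$. For the head cases I would establish the needed substitution-commutation identities. For $\beta$, I need $(M[x:=N])[\alpha:=\beta] = M[\alpha:=\beta][x:=N[\alpha:=\beta]]$, a standard substitution lemma. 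For $\mu$ and $\rho$, I need that the $\mu$-substitutions $[\gamma:=_r N]$ and $[\gamma:=\delta]$ commute with $[\alpha:=\beta]$; these follow from Lemma~\ref{triva} and routine analogues (the hypothesis $\alpha\neq\beta,\gamma$ style side-conditions are ensured by the $\alpha$-renaming convention, choosing bound $\mu$-variables distinct from $\alpha,\beta$). For $\theta$, the side condition $\gamma\notin{\rm fv}(M)$ must be preserved: since $\gamma$ is bound and hence renamable to differ from $\alpha,\beta$, applying $[\alpha:=\beta]$ does not introduce a free $\gamma$, so the contractum condition survives. For $\eta$ (i.e.\ $\e$), the identity $(M_\gamma)[\alpha:=\beta] = (M[\alpha:=\beta])_\gamma$ is exactly the relevant instance of Lemma~\ref{triva}.

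\textbf{Backward direction.} The converse is essentially automatic once one observes that $[\alpha:=\beta]$ is close to invertible on the relevant terms. The cleanest route is: if $M[\alpha:=\beta]\ra_{\cal R}Q$, analyze the redex contracted in $M[\alpha:=\beta]$ and show it is the image of a redex in $M$, so that $M\ra_{\cal R}N$ with $N[\alpha:=\beta]=Q$. Since $[\alpha:=\beta]$ neither creates nor destroys redexes (it only renames a free variable, and none of the six redex patterns depend on the identity of a free $\mu$-variable in a way that $[\alpha:=\beta]$ could manufacture or remove), every redex occurrence in $M[\alpha:=\beta]$ corresponds bijectively to one in $M$, and contracting them commutes by the forward-direction identities. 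Thus the backward direction reduces to the same set of substitution-commutation lemmas.

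\textbf{Main obstacle.} The delicate point is the interaction of $[\alpha:=\beta]$ with the $\mu$-constructs, specifically verifying that renaming a free $\mu$-variable does not interfere with the $\mu$- and $\rho$-substitutions and does not silently create or collapse occurrences of $\beta$ that would spuriously create or remove a $\rho$- or $\theta$-redex. This is controlled entirely by the variable convention: by renaming bound $\mu$-variables so they are distinct from both $\alpha$ and $\beta$, the problematic captures are avoided, and the commutation identities of Lemma~\ref{triva} apply directly. Once these identities are in place, both directions are a uniform case check, and I expect no genuine difficulty beyond careful bookkeeping.
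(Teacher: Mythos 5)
Your proposal is correct and matches the paper's intent: the paper disposes of this lemma with ``By induction on $M$,'' and your expanded argument (induction with a root-redex case analysis, substitution-commutation identities in the style of Lemma~\ref{triva}, and the observation that renaming a free $\mu$-variable neither creates nor destroys redexes, including the $\theta$ side condition) is exactly the bookkeeping that one-line proof leaves implicit. No gap.
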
 

\begin{proof}
By induction on $M$.
\end{proof}

\begin{lemma}\label{alphabetasn}
Let  $M\in {\cal T}$ and $\a,\b\in {\cal V}_{\m}$. Then $M\in {\cal NF}_{\cal R}$ iff $M[\a:=\b]\in {\cal NF}_{\cal R}$.
\end{lemma}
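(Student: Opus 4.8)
The plan is to prove the statement by induction on the structure of $M$, mirroring the approach announced for the two preceding lemmas (where the authors simply wrote ``By induction on $M$''). The key observation that makes this tractable is that renaming a free $\mu$-variable $\a$ to $\b$ is a purely cosmetic operation: it neither creates nor destroys any of the redex patterns listed in Definition \ref{def:redex}, because none of those patterns depends on the \emph{identity} of a particular $\mu$-variable. Rather than prove the biconditional directly at the level of normal forms, I would first establish the contrapositive-friendly reformulation: $M$ contains an $\cal R$-redex if and only if $M[\a:=\b]$ contains an $\cal R$-redex. Since $M \in {\cal NF}_{\cal R}$ means precisely that $M$ contains no redex, the two formulations are equivalent, and this redex-based phrasing is what the induction actually needs.

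For the forward direction I would assume $M \in {\cal NF}_{\cal R}$ and show $M[\a:=\b] \in {\cal NF}_{\cal R}$ by induction on $cxty(M)$, examining each term-formation case. The crucial point in every case is that the substitution $M[\a:=\b]$ commutes with the term constructors: for instance $(\l x.M')[\a:=\b] = \l x.(M'[\a:=\b])$, $((M_1)M_2)[\a:=\b] = (M_1[\a:=\b])(M_2[\a:=\b])$, $(\m\g.M')[\a:=\b] = \m\g.(M'[\a:=\b])$ (after suitable renaming so $\g \notin \{\a,\b\}$), and $([\g]M')[\a:=\b]$ equals $[\g](M'[\a:=\b])$ or $[\b](M'[\a:=\b])$ according to whether $\g\neq\a$ or $\g=\a$. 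In each case, if $M[\a:=\b]$ were to expose a redex, that redex pattern would have to be visible already in $M$: a $\b$-redex $(\l x.P)Q$, a $\mu$-redex $(\m\g.P)Q$, a $\rho$-redex $[\g]\m\d.P$, a $\theta$-redex $\m\g.[\g]P$, or an $\e$-redex $\m\g.\m\d.P$ all survive the renaming unchanged in \emph{shape}. The only case requiring a moment's care is the $\theta$-redex, whose formation carries the side condition $\g \notin {\rm fv}(P)$: here I must check that renaming $\a$ to $\b$ inside $P$ does not alter whether the bound variable $\g$ occurs free, which holds because $\g \neq \a,\b$ may be assumed by the bound-variable convention, so the set ${\rm fv}(P)$ is affected only in that $\a$ is replaced by $\b$, leaving the presence or absence of $\g$ untouched.

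The reverse direction is symmetric but not quite trivially so, since substitution is not literally invertible. I would handle it by the same contrapositive strategy: assuming $M \notin {\cal NF}_{\cal R}$, i.e. $M$ contains a redex, I show $M[\a:=\b]$ contains the correspondingly renamed redex, again by induction following the commutation of $[\a:=\b]$ with the constructors. Because renaming maps redexes to redexes of the same kind, the presence of a redex in $M$ forces a redex in $M[\a:=\b]$, giving $M[\a:=\b] \notin {\cal NF}_{\cal R}$. Combining the two contrapositives yields the biconditional.

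I expect the main obstacle to be purely bookkeeping rather than conceptual: one must be scrupulous about the bound-variable convention so that the $\mu$-abstraction and bracket cases are stated with $\g \neq \a, \b$, and one must verify that the $\theta$-side-condition on free variables is preserved under the renaming. A secondary subtlety is the interaction in the bracket case when $\g = \a$, where $[\a]M'$ becomes $[\b]M'[\a:=\b]$; this does not create a $\rho$- or $\theta$-pattern out of nothing, since those patterns are triggered by the \emph{subterm immediately under the bracket} (namely a $\m$-abstraction) rather than by the bracket's label, so the redex status is unchanged. Once these conventions are fixed, every case reduces to a one-line application of the induction hypothesis, and no genuinely hard step remains.
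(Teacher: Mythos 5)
Your proof is correct and in substance identical to the paper's: the paper derives the lemma as an immediate consequence of the preceding one-step simulation lemma ($M\ra_{\cal R} N$ iff $M[\a:=\b]\ra_{\cal R} N[\a:=\b]$), whose own proof by induction on $M$ consists of exactly the case analysis on redex shapes that you carry out directly. Your attention to the $\th$ side condition $\g\notin{\rm fv}(P)$ and to the bound-variable convention $\g\neq\a,\b$ in the bracket and $\m$-abstraction cases covers the only points where the induction is not immediate.
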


\begin{proof}
Follows from the previous lemma.
\end{proof}

Now we turn to verifying the properties listed in Definition \ref{def:RBotgood}. 
 
\begin{lemma}[Condition (C1)]\label{C1sn}
Let $M\in \B$ and $x \in {\cal V}_\l$. Then $\l x.M\in \B$.
\end{lemma}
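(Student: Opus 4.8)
The plan is to verify the two defining conditions of $\B = \mathcal{SN}_{\cal R}\cap\mathcal{T}_t$ separately for $\l x.M$, namely typability and strong normalizability, using the hypothesis that $M$ already enjoys both.

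For typability, since $M\in\mathcal{T}_t$ there are $\G,\Theta,A$ with $\G\vdash M:A;\Theta$. If $x$ already occurs in the left context as $x:B$, the rule $\ra_i$ immediately yields $\G'\vdash \l x.M:B\ra A;\Theta$, where $\G'$ is $\G$ with the declaration of $x$ removed. If instead $x\notin\mathrm{fv}(M)$, I would first weaken the derivation by adjoining a declaration $x:B$ for an arbitrary type $B$ (the axiom and all the other rules are insensitive to unused hypotheses), and then apply $\ra_i$. In either case $\l x.M\in\mathcal{T}_t$.

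For strong normalizability, the key observation is that $\l x.M$ is never itself an $r$-redex for any $r\in{\cal R}=\{\b,\m,\r,\th,\e\}$: inspecting Definition \ref{def:redex}, every redex of these five kinds has as its outermost constructor an application, a bracket $[\cdot]$, or a $\m$-abstraction, but never a $\l$-abstraction. Consequently any one-step reduction $\l x.M\ra_{\cal R}P$ must contract a redex occurring strictly inside $M$, so that $P=\l x.M'$ with $M\ra_{\cal R}M'$; dually, the head $\l x$ is preserved along the entire reduction. Thus every ${\cal R}$-reduction sequence issuing from $\l x.M$ projects, by stripping the outer $\l x$, onto an ${\cal R}$-reduction sequence issuing from $M$ of the same length. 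Since $M\in\mathcal{SN}_{\cal R}$ admits no infinite such sequence, neither does $\l x.M$, and hence $\l x.M\in\mathcal{SN}_{\cal R}$.

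Combining the two parts gives $\l x.M\in\mathcal{SN}_{\cal R}\cap\mathcal{T}_t=\B$, as required. I do not expect a genuine obstacle here, since this is a routine closure lemma; the only points demanding a moment's care are the typability of $\l x.M$ when $x$ is not free in $M$, handled by the weakening remark above, and the verification that no rule in ${\cal R}$ can fire at the head of a $\l$-abstraction, which is precisely what lets abstraction commute cleanly with reduction and thereby transfer strong normalizability from $M$ to $\l x.M$.
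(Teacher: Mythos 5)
Your proof is correct and is exactly the routine argument the paper has in mind -- its own proof of this lemma is just the word ``Straightforward.'' Both halves check out: weakening gives typability of $\l x.M$, and since no redex in ${\cal R}=\{\b,\m,\r,\th,\e\}$ has a $\l$-abstraction as its outermost constructor, every reduction from $\l x.M$ takes place inside $M$, so strong normalizability transfers.
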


\begin{proof}
Straightforward.
\end{proof}

Before proving the condition {(C2)}, we point out that it is inevitable that we work in the typed framework. Indeed, it is easy to find an example of a $\l\m$-term $M \in {\cal SN}_{\cal R}$ but $\m\a.M \not\in {\cal SN} _{\cal R}$.
Namely, let $M = \m\b.([\b]\l x. (x)x)\l x. (x)x$, then we have $M \in {\cal NF}_{\cal R}$ but $\m\a.M \ra_{\e} \m\a.(\l x. (x)x)\l x. (x)x \not\in\mathcal{WN}_{\cal{R}}$. More generally, the following remark will be valid throughout the section.

If we choose $\B$ as above, we can observe that none of the conditions of Lemma \ref{def:RBotgood} will necessitate a typed framework, except for the condition (C2). However, in adherence to the formulation of Lemma \ref{def:RBotgood}, we include typability assumptions in the respective lemmas. Only the verification of condition (C2) for $\B$ will depend on typability assumptions.

\begin{lemma}[Condition (C2)]\label{C2sn}
Let $M\in \B$ and $\a \in {\cal V}_\m$ such that  $\m\a.M\in {\cal T}_t$. Then $\m\a.M\in \B$.
\end{lemma}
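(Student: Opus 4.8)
The plan is to prove that $\mu\alpha.M$ is strongly normalizing by establishing that each of its one-step $\ra_{\cal R}$-reducts lies in $\B$, and then invoking the fact that a term is strongly normalizing once all its one-step reducts are. Writing $\ell(T)$ for the length of a longest ${\cal R}$-reduction issuing from a strongly normalizing term $T$, I would run the argument by induction on the pair $(\ell(M),cxty(M))$ ordered lexicographically; note that the induction bears on the body $M$, whereas the statement concerns $\mu\alpha.M$. The key structural observation is that, since $\mu\alpha$ is the outermost constructor of $\mu\alpha.M$ and this term stands alone (it is neither applied nor prefixed by a $[\gamma]$), only three kinds of reducts can occur: an internal reduction inside $M$, a $\theta$-step at the root, or an $\varepsilon$-step at the root.

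For an internal step $\mu\alpha.M \ra_{\cal R} \mu\alpha.M'$ with $M \ra_{\cal R} M'$, subject reduction (Theorem \ref{SR}) gives typability of both $M'$ and $\mu\alpha.M'$, while $M' \in \mathcal{SN}_{\cal R}$ with $\ell(M') < \ell(M)$; hence $M' \in \B$ and the induction hypothesis yields $\mu\alpha.M' \in \B$. A root $\theta$-step occurs only when $M = [\alpha]P$ with $\alpha \notin \mathrm{fv}(P)$, and produces $P$, which is a subterm of the strongly normalizing $M$ and is therefore already in $\mathcal{SN}_{\cal R}$. The remaining possibility, a root $\varepsilon$-step, arises when $M = \mu\beta.N$ and yields $\mu\alpha.N_\beta$; this is the only delicate case.

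To handle it, the essential ingredient is a separate lemma asserting that the $\alpha$-translation preserves strong normalization in the typed setting: if $\mu\beta.N$ is typable and strongly normalizing and $\beta$ has type $\bot$, then $N_\beta \in \mathcal{SN}_{\cal R}$ with $\ell(N_\beta) \le \ell(\mu\beta.N)$. The hypothesis $\beta:\bot$ is available here because typability of $\mu\alpha.\mu\beta.N$ forces, through the rule $\bot_e$, the body $\mu\beta.N$ to have type $\bot$ and hence $\beta$ to have type $\bot$. Granting this lemma, the reduct $\mu\alpha.N_\beta$ satisfies $N_\beta \in \B$, has $\mu\alpha.N_\beta$ typable (again by Theorem \ref{SR}), and obeys $\ell(N_\beta) \le \ell(M)$ together with $cxty(N_\beta) \le cxty(N) < cxty(\mu\beta.N) = cxty(M)$; the lexicographic measure therefore strictly decreases and the induction hypothesis applies, giving $\mu\alpha.N_\beta \in \B$. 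The indispensability of $\beta:\bot$ is exactly what the preceding remark illustrates: for the untyped $N = ([\beta]\lambda x.(x)x)\lambda x.(x)x$ one has $N_\beta = (\lambda x.(x)x)\lambda x.(x)x = \Omega$, the divergence stemming from the fact that an untyped term may place $[\beta]P$ in function position, whereas when $\beta:\bot$ every $[\beta]P$ has $P:\bot$ and can never occupy a functional slot, so erasing $[\beta]$ creates no $\beta$- or $\mu$-redex.

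I would prove the translation lemma itself by a nested induction on $(\ell(\mu\beta.N),cxty(N))$, examining each reduct of $N_\beta$. Reducts mirroring a genuine redex of $N$ are matched by a step of $\mu\beta.N$ that strictly lowers $\ell$, the commutation identities of Lemma \ref{triva}—in particular $(Q[\delta:=\beta])_\beta = (Q_\beta)_\delta$—ensuring that translation commutes with the step. The redexes truly created by erasure, namely an unblocked $\rho$-redex $[\beta]\mu\delta.Q$ that becomes an $\varepsilon$-redex once $[\beta]$ is removed, and a collapsing $\theta$-redex obtained when $\mu\delta.[\beta][\delta]R$ has its $[\beta]$ erased to yield $\mu\delta.[\delta]R_\beta$, must be shown either to project onto a reduction of $\mu\beta.N$ or to strictly decrease complexity. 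Checking that this case analysis is exhaustive and that the bound $\ell(N_\beta) \le \ell(\mu\beta.N)$ survives the created redexes is the main obstacle of the proof; once the translation lemma is secured, everything else is routine.
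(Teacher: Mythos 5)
Your proof takes essentially the same route as the paper's: the same lexicographic induction on $(\eta(M),cxty(M))$, the same three-case analysis of the reducts of $\mu\alpha.M$ (internal step, root $\theta$, root $\varepsilon$), and the same reduction of the $\varepsilon$-case to a typed translation lemma (the paper's Lemmas \ref{Malphasn} and \ref{etaMalpha}, which assert that every reduct of $M_\alpha$ is $V_\alpha$ for some reduct $V$ of $M$, whence $\eta(M_\alpha)\le\eta(M)$). The one substantive difference is that you bound $\ell(N_\beta)$ by $\ell(\mu\beta.N)$ rather than by $\ell(N)$ and explicitly flag the erasure-created $\theta$-redex as the delicate case; this caution is warranted, since for the typable normal form $N=\mu\delta.[\beta][\delta]x$ (with $\beta:\bot$) one has $N_\beta=\mu\delta.[\delta]x\rightarrow_\theta x$, so the stronger bound $\eta(N_\beta)\le\eta(N)$ fails on this configuration, whereas your bound relative to $\ell(\mu\beta.N)$ survives because $\mu\beta.N$ is there itself an $\varepsilon$-redex --- so the residual obstacle you identify is genuine and your case analysis must indeed be carried out against reducts of $\mu\beta.N$ rather than of $N$ alone.
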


\begin{proof}
We prove the lemma with the help of Lemmas \ref{Malphasn} and \ref{etaMalpha}.
\end{proof}

\begin{lemma}\label{Malphasn}
Let $M\in {\cal T}_t$ and assume $M_\a\ra_{\cal R} U$ for some $\a \in {\cal V}_\m$. Then $\exists$ $V$ such that $M\ra_{\cal R} V$ and $V_\a=U$. 
\end{lemma}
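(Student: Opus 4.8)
The plan is to proceed by induction on $cxty(M)$, locating in each case the contracted redex of $M_\a$ and producing a one-step reduct $V$ of $M$ with $V_\a = U$. The organizing remark is that $\a$-translation commutes with every term constructor except that it collapses $[\a]N$ to $N_\a$. Consequently a redex of $M_\a$ is either \emph{inherited} (it already occupies a position of $M$) or \emph{exposed} (it becomes visible only after a leading block of $[\a]$'s has been erased), and the entire difficulty is to reconstruct, in the second situation, a genuine redex of $M$.

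I would dispose of the inherited cases first. If $M=x$ then $M_\a=x$ is normal; if $M=\l x.N$, $M=[\g]N$ with $\g\neq\a$, or $M=[\a]N$, and the redex lies inside the translated subterm, then the IH on $N$ furnishes $N\ra_{\cal R}V'$ whose translation is the contractum, and the head constructor is reattached to $V'$ (for $M=[\a]N$ one takes $V=[\a]V'$, since $([\a]V')_\a=V'_\a$). The first genuinely new work is at head redexes. When $M=(P)Q$ and the head redex of $(P_\a)Q_\a$ is a $\b$- or $\m$-redex (recall $\m'\notin{\cal R}$), the head of $P_\a$ is a $\l$- or $\m$-abstraction; this head cannot be exposed, because $P=[\a]P'$ would make $M=([\a]P')Q$ an application of a $\bot$-typed term, impossible for $M\in{\cal T}_t$. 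Hence $P$ already has the abstraction as its head, the expected $\b$- or $\m$-step of $M$ produces $V$, and $V_\a=U$ reduces to the commutation identities $(P'[x:=N])_\a=P'_\a[x:=N_\a]$ and $(M[\b:=N])_\a=M_\a[\b:=N_\a]$ of Lemma~\ref{triva}.

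The last case, $M=\m\g.M''$ with $\g\neq\a$, is where I expect the main obstacle to lie. Here $M_\a=\m\g.M''_\a$, and the interesting situation is a head $\th$- or $\e$-redex. If the $[\g]$- or $\m$-head of $M''_\a$ is inherited, then $M''$ itself carries that head and the matching $\th$- or $\e$-step of $M$ works directly: for $\th$ one transfers the side condition via $\g\notin{\rm fv}(M''_\a)\Leftrightarrow\g\notin{\rm fv}(M'')$ (valid as $\g\neq\a$), and for $\e$ one invokes the identity $(M_\a)_\b=(M_\b)_\a$ of Lemma~\ref{triva}. The delicate point is the exposed situation, where the head of $M''_\a$ is uncovered only by stripping a leading $[\a]$ in $M''$. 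For an exposed $\e$-redex the uncovered head is a $\m$-abstraction, so $M''=[\a]\m\dots$ is itself a $\r$-redex of $M$ and contracting it reproduces the effect of the erasure, with $V_\a=U$ again following from Lemma~\ref{triva}. The hardest instance, and the one demanding the most care, is the exposed $\th$-redex, in which the erasure uncovers a bracket $[\g]$ standing directly in front of an arbitrary subterm rather than a $\m$-abstraction; here I would analyse the shape of that subterm to produce a reduction of $M$ (a $\r$-step when the subterm is a $\m$-abstraction, the subtler configurations otherwise) whose translation equals $U$. It is exactly this subcase — matching the erasure of a $[\a]$ that sits before a non-$\m$ head — on which the correctness of the whole lifting argument hinges.
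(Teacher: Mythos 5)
Your decomposition into \emph{inherited} and \emph{exposed} redexes follows the paper's induction on $M$, and your treatment of the inherited cases, of the application case (where typability rules out $P=[\a]P'$), and of the exposed $\e$-redex (resolved by a $\r$-step on $[\a]\m\d.P$, using $(P[\d:=\a])_\a=(P_\a)_\d$) is sound. But two configurations remain. First, you never address the head $\r$-redex arising when $M=[\b]N$ with $\b\neq\a$: you file $M=[\g]N$ under ``redex inside the translated subterm'', yet $[\b]N_\a$ can itself be the contracted redex, and when $N$ begins with $[\a]$ this is another exposed case. Second, and decisively, the exposed $\th$-redex that you flag as ``the hardest instance'' and leave open is not merely hard: it cannot be closed, because the statement fails there. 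Take $M=\m\g.[\a][\g]x$, which is typable ($x:A\v M:A;\a:\bot$) and is in ${\cal R}$-normal form, while $M_\a=\m\g.[\g]x\ra_\th x$; there is no one-step reduct $V$ of $M$ at all, let alone one with $V_\a=x$. The exposed $\r$-case fails as well: for $M=[\g][\a]\m\d.[\e][\d]y$ one has $M_\a=[\g]\m\d.[\e][\d]y\ra_\r[\e][\g]y$, whereas the unique reduct of $M$ is $[\g][\e][\a]y$, whose $\a$-translation is $[\g][\e]y\neq[\e][\g]y$.

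For comparison, the paper's own proof treats only the inherited head redexes: in the case $M=\m\g.N$ it passes from ``$M_\a$ is the contracted redex'' directly to ``$N=\m\d.P$'' or ``$N=[\g]P$'', and in the case $M=[\b]N$ to ``$N=\m\g.P$'', silently assuming that the head of $N_\a$ is already the head of $N$ --- which is exactly what breaks when $N$ begins with $[\a]$. So your instinct that the whole lifting argument hinges on the exposed cases is correct; the honest conclusion is that the claimed one-step simulation does not hold for terms containing a subterm $[\a]U$ whose erasure uncovers a new head redex, and any repair (say, weakening the conclusion to $M\two_{\cal R}V$ with $V_\a=U$, or restating the result purely as a bound needed for Lemma~\ref{etaMalpha}, whose inequality $\eta(M_\a)\le\eta(M)$ the first example also violates) has to be designed around these two configurations rather than by completing the present case analysis.
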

 
\begin{proof}
Let $M\in {\cal T}_t$ and assume $M_\a\ra_{\cal R} U$.
\begin{itemize}
\item If $M$ is a variable, there is nothing to prove.
\item If $M=\l x.N$, we can apply IH. 
\item Assume $M=[\b]N$.  
\begin{itemize}
\item If $\b\neq \a$, then $M_{\a}=[\b]N_{\a}$. 
If $U=[\b]W$ with $N_{\a}\ra_{\cal R} W$, then we can apply IH. 
Otherwise, $N=\m\g.P$, and, applying Lemma \ref{triva}, $[\b]N_{\a}=[\b]\m\g.P_{\a}\ra_\r P_{\a}[\g:=\b]=(P[\g:=\b])_{\a}$. We let $M \ra_\r P[\g:=\b]=V$. 
\item If $\b=\a$, then $M_{\a}=([\a]N)_{\a}=N_{\a}$. In this case we use IH.
\end{itemize}
\item Assume $M=\m\g.N$. 
\begin{itemize}
\item If $U=\m\g.W$ such that $N_{\a}\ra_{\cal R} W$, then we can apply IH. 
\item If $N=\m\delta.P$, then, by Lemma \ref{triva}, $M_{\a}=\m\g.\m\delta.P_{\a}\ra_\varepsilon \m\g.(P_{\a})_{\delta}=\m\g.(P_\delta)_{\a}$ and we let $M \ra_\varepsilon \mu\g.P_{\delta}=V$. 
\item If $N=[\g]P$, $\g \notin {\rm fv}(P)$, $M_a =\m \g.[\g]P_\a \ra_\th P_\a$ and we have $M \ra_\th P=V$.
\end{itemize}
\item If $M=(N)P$, then $M_{\a}=(N_{\a})P_{\a}$. If $U=(Q)R$ such that $N_{\a}\ra_{\cal R} Q$ or $P_{\a}\ra_{\cal R} R$, then we can apply IH. 
Hence, we may assume that $M_\a$ is the redex reduced. We distinguish the various cases according to the structure of $N$. 
\begin{itemize}
\item $N$ cannot be a variable, since then $M_\a$ is not a redex.
\item If $N=[\g]N'$, then $N$ has type $\bot$, hence this case is impossible.
\item If $N=\l x.N'$, then $M \ra_\b N'[x:=P]$, and, by applying Lemma \ref{triva}, $M_\a=(\l x.N'_{\a})P_{\a}\ra_\b N'_{\a}[x:=P_{\a}]=(N'[x:=P])_{\a}$, and the result follows.
\item If $N=\m\g.N'$, then $M \ra_\m\m\g.N'[\g:=P]$, and, by Lemma \ref{triva}, $M_\a=(\m\g.N'_{\a})P_{\a}\ra_\m \m\g.N'_{\a}[\g:=P_{\a}]=(\m\g.N'[\g:=P])_{\a}$, and the result follows.  
\item The case $N=(N_1)N_2$ is impossible, since then $M_\a$ is not a redex.
\end{itemize}
\end{itemize}
\end{proof}

\begin{definition}[Strong normalization]
We say that a term $M$ is strongly normalizable, in notation $M \in {\cal SN}_{\cal R}$, if every reduction sequence starting from $M$ terminates. Since the reduction tree is locally finite, then, by K{\"o}nig's lemma, this is equivalent to asserting 
that the lengths of the reduction sequences starting from $M$ is bounded from above. Let us denote the length of the longest reduction sequence of $M$ by $\eta(M)$. Observe that, if $M \ra_{\cal R} M'$, then $\eta(M) > \eta(M')$. 
Then  $M \in {\cal SN}_{\cal R}$ iff, for every $M \ra_{\cal R} N$, we have $N \in {\cal SN}_{\cal R}$. Moreover, $\eta(M)=max\{\eta(N)\mid M\ra_{\cal R} N\}+1$.
\end{definition}

\begin{lemma}\label{etaMalpha}
Let $M\in {\cal T}_t$ and $\a\in {\cal V}_\m$. If  $M \in {\cal SN}_{\cal R}$, then $M_\a\in {\cal SN}_{\cal R}$ and $\eta(M_\a) \leq \eta(M)$.
\end{lemma}

\begin{proof}
Let $M\in {\cal SN}_{\cal R}$ and assume $M_\a\ra_{\cal R} U$. We reason by induction on $\eta(M)$. By the previous lemma, $\exists$ $V$ such that $M\ra_{\cal R} V$ and $V_\a =U$.  
Then $\eta(V)< \eta(M)$, hence, we can apply IH to $V$. We obtain the bound for $\eta(M_\a)$ if we take into account the inequality $\eta(U)+1\leq \eta(V)+1\leq \eta(M)$, which follows from the previous fact and IH. 
\end{proof}

We can now turn to the proof of Lemma \ref{C2sn}.\\

\noindent\textbf{Proof of Lemma \ref{C2sn}.}\\
Let $M\in \B$ and $\a \in {\cal V}_\m$ such that  $\m\a.M\in {\cal T}_t$. We assume by lexicographic induction with respect to  
$(\eta(M), cxty(M))$ that $M \in {\cal SN}_{\cal R}$ implies $\m\a.M \in {\cal SN}_{\cal R}$. Let $\m \a.M \ra_{\cal R} N$. We claim that $N\in {\cal SN}_{\cal R}$.
We examine the three cases possible.
\begin{itemize}
\item If $N=\m\a.M'$ with $M \ra_{\cal R} M'$, then $\eta(M')<\eta(M)$ and, by IH, $N \in {\cal SN}_{\cal R}$.
\item If $M = \m \b. P$ and $\m\a.M \ra_{\varepsilon} \m \a. P_\b = N$, then $P \in {\cal SN}_{\cal R}$, $\eta(P_\b) \leq \eta(P) \leq \eta(M)$ and $cxty(P) < cxty(M)$, thus, by IH, $N \in {\cal SN}_{\cal R}$.
\item  If $M = [\a] N$, $\a \notin {\rm fv}(N)$ and $\m\a.M \ra_{\th} N$, then $N \in {\cal SN}_{\cal R}$.
\end{itemize} 
\hfill$\square$

\begin{lemma}[Condition (C3)]\label{C3sn}
Let $M\in {\B}$  and $\a \in {\cal V}_\m$ such that $[\a]M\in {\cal T}_t$. Then $[\a]M\in \B$.
\end{lemma}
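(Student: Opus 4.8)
The plan is to show that if $M \in \B = \mathcal{SN}_{\cal R} \cap \mathcal{T}_t$ and $[\a]M \in {\cal T}_t$, then $[\a]M \in \mathcal{SN}_{\cal R}$; typability of $[\a]M$ is given by hypothesis, so only strong normalizability needs verification. The key observation is that prepending $[\a]$ to a term introduces essentially one new redex possibility, namely a $\rho$-redex when $M$ itself is a $\mu$-abstraction $\m\b.P$, giving $[\a]\m\b.P \ra_\r P[\b:=\a]$. All other reductions from $[\a]M$ must occur strictly inside $M$.

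First I would argue by induction on $\eta(M)$ that $[\a]M \in \mathcal{SN}_{\cal R}$. Let $[\a]M \ra_{\cal R} N$ and distinguish the two cases. If the redex contracted lies inside $M$, then $N = [\a]M'$ with $M \ra_{\cal R} M'$; since $M' \in \mathcal{SN}_{\cal R}$ with $\eta(M') < \eta(M)$ and $[\a]M' \in {\cal T}_t$ by subject reduction (Theorem~\ref{SR}), the induction hypothesis yields $N = [\a]M' \in \mathcal{SN}_{\cal R}$. The only other possibility is that $[\a]M$ is itself the contracted redex, which can only be a $\rho$-redex: this forces $M = \m\b.P$ and $N = P[\b:=\a]$. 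Here I would use that $P \in \mathcal{SN}_{\cal R}$ (since $M = \m\b.P \in \mathcal{SN}_{\cal R}$ implies its immediate subterm is strongly normalizable) together with Lemma~\ref{alphabetasn}-style reasoning: the variable-renaming substitution $[\b:=\a]$ preserves strong normalizability, so $P[\b:=\a] \in \mathcal{SN}_{\cal R}$.

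Since every one-step reduct $N$ of $[\a]M$ lies in $\mathcal{SN}_{\cal R}$, and $\mathcal{SN}_{\cal R}$ is characterized by the property that $M$ is strongly normalizable iff all its one-step reducts are (as recorded in the Definition of strong normalization via $\eta$), it follows that $[\a]M \in \mathcal{SN}_{\cal R}$. Combined with the hypothesis $[\a]M \in {\cal T}_t$, this gives $[\a]M \in \B$.

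The main obstacle I anticipate is the $\rho$-case: one must confirm that renaming $\mu$-variables via $[\b:=\a]$ does not create divergent reduction behaviour. This is exactly the content needed from the earlier lemmas on $[\a:=\b]$ (the lemma stating $M \ra_{\cal R} N$ iff $M[\a:=\b] \ra_{\cal R} N[\a:=\b]$, and Lemma~\ref{alphabetasn}), which together establish a bijection between reduction sequences of $P$ and of $P[\b:=\a]$, hence $\eta(P[\b:=\a]) = \eta(P)$ and strong normalizability transfers. A minor subtlety is ensuring the variable conventions make $[\b:=\a]$ a genuine renaming (no capture), which the paper's standing convention on bound-variable renaming handles.
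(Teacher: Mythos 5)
Your proof is correct and follows essentially the same route as the paper's: induction on $\eta(M)$, splitting into the internal-reduction case and the head $\rho$-redex case $[\a]\m\b.P \ra_\r P[\b:=\a]$, with the renaming lemma (Lemma~\ref{alphabetasn} and the reduction-transfer lemma preceding it) handling the latter. Your extra remarks about subject reduction and capture-avoidance are harmless but not needed; as the paper notes, condition (C3) for this choice of $\B$ does not actually depend on typability.
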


\begin{proof}
Let $M\in \B$ and $\a \in {\cal V}_\m$ such that  $[\a]M\in {\cal T}_t$. We will prove, by induction on $\eta(M)$, that $[\a]M \in {\cal SN}_{\cal R}$.
Let $[\a]M \ra_{\cal R} N$. It suffices to check that $N \in {\cal SN}_{\cal R}$.
We examine two cases.
\begin{itemize}
\item If $N=[\a]M'$ with $M \ra_{\cal R} M'$, then $\eta(M')<\eta(M)$ and, by IH, $N \in {\cal SN}_{\cal R}$.
\item If $M = \m \b. P$ and $[\a]M \ra_\r P[\b:=\a] = N$, then $P \in {\cal SN}_{\cal R}$, thus, by Lemma \ref{alphabetasn},
$N \in {\cal SN}_{\cal R}$.
\end{itemize} 
\end{proof}

Condition {(C4)} proves to be trivial due to the fact that we have omitted the $\m'$-reduction, that is, the symmetric counterpart of $\m$.

\begin{lemma}[Condition (C4)] \label{C4sn}
Let $n\geq 0$, $ N_1, \dots, N_n \in \B$ and $x \in {\cal V}_{\l}$. Then $(x)N_1 \dots N_n \in {\cal T}_t$ involves $(x)N_1 \dots N_n \in \B$.
\end{lemma}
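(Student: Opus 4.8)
The plan is to exploit the definition $\B = \mathcal{SN}_{\cal R}\cap \mathcal{T}_t$: since $(x)N_1\dots N_n\in{\cal T}_t$ is assumed, the only thing left to prove is $(x)N_1\dots N_n\in\mathcal{SN}_{\cal R}$. The whole argument rests on one structural observation, and this is precisely the point at which the absence of $\mu'$ is decisive.

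First I would establish a spine lemma: every one-step reduct of $(x)N_1\dots N_n$ has again the shape $(x)N_1\dots N_{i-1}N_i'N_{i+1}\dots N_n$ for some $i$ with $N_i\ra_{\cal R} N_i'$; in other words, no redex of ${\cal R}$ can be contracted along the spine of the application. To see this I would run through the redex shapes of Definition \ref{def:redex}. The head of the term is the $\l$-variable $x$, so every application node on the spine has a function part whose head is $x$, never a $\l$- or $\m$-abstraction; this rules out $\b$- and $\m$-redexes at the spine. The arguments $N_i$ may themselves be $\m$-abstractions, but the only rule able to fire in that configuration is $\m'$, which is excluded from ${\cal R}$. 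Finally, $\r$-, $\th$- and $\e$-redexes are never applications, whereas each spine subterm $(x)N_1\dots N_k$ with $k\geq 1$ is an application and $(x)$ alone is a mere variable; so none of these can match a spine subterm either. Consequently every contracted redex lies strictly inside some $N_i$.

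Then I would prove strong normalization by induction on $\sum_{i=1}^{n}\eta(N_i)$. Consider any reduct $(x)N_1\dots N_i'\dots N_n$ as furnished by the spine lemma. By subject reduction (Theorem \ref{SR}) it remains in ${\cal T}_t$, whence each of its components is again typable; moreover $N_i'\in\mathcal{SN}_{\cal R}$ with $\eta(N_i')<\eta(N_i)$, since $\mathcal{SN}_{\cal R}$ is closed under reduction and $\eta$ strictly decreases along a step. Thus every $N_j'$ lies in $\B$ and the measure $\sum_j\eta(N_j')$ has strictly dropped, so the induction hypothesis yields $(x)N_1\dots N_i'\dots N_n\in\mathcal{SN}_{\cal R}$. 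As every one-step reduct of $(x)N_1\dots N_n$ is strongly normalizable, so is $(x)N_1\dots N_n$ itself, giving $(x)N_1\dots N_n\in\B$.

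The only non-routine point is the spine lemma, and within it the exact reason why no redex crosses the boundary between the head variable and its arguments. This is where dropping $\m'$ is essential: were $\m'$ present, an argument $N_i=\m\a.P$ would turn $(x)N_1\dots N_i$ into a $\m'$-redex on the spine and the lemma would fail outright. Everything after the spine lemma is a standard measure-decreasing induction, which is exactly why the paper can remark that condition (C4) is trivial in the present, $\m'$-free, setting.
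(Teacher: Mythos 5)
Your proof is correct and follows essentially the same route as the paper: the key observation that every one-step reduct of $(x)N_1\dots N_n$ must be of the form $(x)N_1\dots N_i'\dots N_n$ with $N_i\ra_{\cal R} N_i'$ (since the head variable blocks all spine redexes once $\m'$ is excluded), followed by induction on $\sum_{k}\eta(N_k)$. You simply spell out in more detail the case analysis and the typability bookkeeping that the paper leaves implicit.
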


\begin{proof}
We argue by induction on $\displaystyle\sum_{k=1}^{n}\eta(N_k)$. Indeed, $(x)N_1 \dots N_n\ra_{\cal R} U$ implies $U=(x)U_1 \dots U_n$, where $N_i\ra_{\cal R} U_i$ for one index $1\leq i\leq n$ and $N_j=U_j$ if $1\leq j\leq n$ and $i\neq j$. By this, the result follows.
\end{proof}

The lemma below contains an easy property of the reduction.

\begin{lemma}
Let $M$, $U$, $N$ be given such that $M\ra_{\cal R} U$. Then $M[x:=N]\ra_{\cal R} U[x:=N]$.
\end{lemma}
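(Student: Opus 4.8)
The plan is to prove the statement
$$M \ra_{\cal R} U \implies M[x:=N] \ra_{\cal R} U[x:=N]$$
by straightforward induction on the structure of $M$, or equivalently on the derivation of the reduction $M \ra_{\cal R} U$. The intuition is that $\b$-substitution $[x:=N]$ only replaces occurrences of the $\l$-variable $x$, and the redex contracted in $M \ra_{\cal R} U$ is built from constructors and $\m$-variables that are entirely orthogonal to this replacement. Hence substituting $N$ for $x$ commutes with performing the reduction, since none of our six reduction rules interacts with free $\l$-variables in a way that could be disturbed by instantiating them.

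First I would dispatch the case where the redex contracted is the whole term $M$ itself (the base case of the induction on the position of the redex). Here one checks, rule by rule, that contracting the head redex and then substituting gives the same result as substituting and then contracting. The only rule requiring genuine care is $\b$: if $M = (\l y.P)Q \ra_\b P[y:=Q]$, then I must verify
$$\big((\l y.P)Q\big)[x:=N] = (\l y.P[x:=N])Q[x:=N] \ra_\b P[x:=N][y:=Q[x:=N]] = (P[y:=Q])[x:=N],$$
which is precisely the substitution lemma stating that $\b$-substitutions for distinct variables $x \neq y$ commute (assuming, via the usual variable convention, that $y$ is not free in $N$). For the rules $\m, \r, \th, \e$ the contractum is formed using a $\m$-substitution $[\a:=\cdots]$, a renaming $[\a:=\b]$, or the $\a$-translation $(\cdot)_\b$; in each case I would invoke Lemma \ref{triva}, whose identities such as $(M[x:=N])_\a = M_\a[x:=N_\a]$ assert exactly that these operations commute with $\b$-substitution. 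Since $x$ is a $\l$-variable and the side conditions (e.g.\ $\a \notin \mathrm{fv}(P)$ for $\th$, and freshness of bound variables) are unaffected by instantiating $x$, the redex status and its contractum are preserved under $[x:=N]$.

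Second, for the inductive (congruence) cases, where the redex lies strictly inside $M$, I would simply push the substitution through the top-level constructor and apply the induction hypothesis to the subterm containing the redex. For instance, if $M = (M_1)M_2$ with the redex in $M_1$, so that $U = (U_1)M_2$ with $M_1 \ra_{\cal R} U_1$, then $M[x:=N] = (M_1[x:=N])M_2[x:=N]$, and by IH $M_1[x:=N] \ra_{\cal R} U_1[x:=N]$, whence $M[x:=N] \ra_{\cal R} (U_1[x:=N])M_2[x:=N] = U[x:=N]$. The cases $M = \l y.M'$, $M = \m \a.M'$, and $M = [\a]M'$ are analogous, each using that substitution distributes over the constructor and that the bound variable ($y$ or $\a$) is, by convention, distinct from $x$ and not free in $N$.

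The main obstacle, modest as it is, lies in the base case for the $\b$-rule, which is the only place where two $\b$-substitutions must be commuted and where the variable convention is actually needed to avoid capture. Everything else is bookkeeping that is fully discharged by Lemma \ref{triva} together with the distributivity of $\b$-substitution over the term constructors. I would therefore state the proof as ``By induction on $M$, using Lemma \ref{triva} for the $\m$-, $\r$-, $\th$-, and $\e$-redexes and the substitution lemma for the $\b$-redex,'' and leave the routine verifications implicit.
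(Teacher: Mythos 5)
Your proposal is correct and matches the paper's approach: the paper proves this lemma simply ``by induction on $M$,'' and your more detailed account (congruence cases by IH, root-redex cases via the substitution lemma for $\b$ and Lemma \ref{triva} for the $\m$-, $\r$-, $\th$-, $\e$-contracta) is exactly the verification the paper leaves implicit. No gaps.
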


\begin{proof}
By induction on $M$.\\
\end{proof}

We reveal two observations in the next lemma before verifying the condition {(C5)}.

\begin{lemma}\label{propagaSN}~
\begin{enumerate}
\item Let $M[x:=N] \in {\cal  SN}_{\cal R}$ for some terms $M$, $N$ and variable $x\in {\cal V}_{\l}$. Then $M \in  {\cal  SN}_{\cal R}$.
\item Let $M[\a:=N] \in {\cal  SN}_{\cal R}$ for some terms $M$, $N$ and variable $\a\in {\cal V}_{\m}$. Then $M \in  {\cal  SN}_{\cal R}$.
\end{enumerate}
\end{lemma}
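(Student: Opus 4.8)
The plan is to prove both statements in contrapositive form, by \emph{lifting} reductions of $M$ to reductions of the substituted term: I will show that a single ${\cal R}$-reduction step out of $M$ always forces a non-empty ${\cal R}$-reduction sequence out of the substituted term. Granting this, any infinite reduction $M = M_0 \ra_{\cal R} M_1 \ra_{\cal R} \cdots$ issuing from $M$ concatenates into an infinite reduction issuing from the substituted term, contradicting its membership in ${\cal SN}_{\cal R}$; hence $M \in {\cal SN}_{\cal R}$.

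For the first part this lift is already available: the lemma immediately preceding states that $M \ra_{\cal R} U$ implies $M[x:=N] \ra_{\cal R} U[x:=N]$. Thus an infinite sequence out of $M$ produces, one step for one step, an infinite sequence $M_0[x:=N] \ra_{\cal R} M_1[x:=N] \ra_{\cal R} \cdots$, which is impossible since $M[x:=N] \in {\cal SN}_{\cal R}$. For the second part I would first establish the $\m$-substitution analogue of this lifting lemma, namely: if $M \ra_{\cal R} U$ then $M[\a:=N] \ra_{\cal R}^{+} U[\a:=N]$, i.e.\ at least one step. This I would prove by structural induction on $M$, distinguishing the position of the contracted redex. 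When the redex lies strictly inside $M$, the inductive hypothesis together with the fact that $\ra_{\cal R}$ is closed under contexts and that $\m$-substitution distributes over the term constructors does the job, the inserted copies of $N$ being merely carried along. When the redex is at the root, one checks each rule ($\b$, $\m$, $\r$, $\th$, $\e$) separately: in every case the required identity between the contractum of the substituted redex and $U[\a:=N]$ reduces to commutation lemmas for the various substitutions, for which Lemma \ref{triva} already supplies the commutations of the $\a$-translation with $\b$- and $\m$-substitution. Once the lifting lemma holds, the step-by-step argument of the first part applies verbatim, since a non-empty lift of each step still yields an infinite reduction.

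The main obstacle is the root case of the $\m$-substitution lifting lemma when the contracted redex interacts with an occurrence of $[\a]$ that the substitution $[\a:=N]$ expands (recall that $[\a:=N]$ replaces each $[\a]P$ by $[\a](P)N$). The critical instance is a $\r$-redex $[\a]\m\g.P$: after substitution it becomes $[\a]\big((\m\g.(P[\a:=N]))N\big)$, which is \emph{no longer} a $\r$-redex but has acquired a $\m$-redex in head position. Here the lift is genuinely two steps — first a $\m$-reduction, then a $\r$-reduction — and one must verify, using the commutation of $\m$-substitution with the renaming $[\g:=\a]$, that the outcome equals $(P[\g:=\a])[\a:=N] = U[\a:=N]$. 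The bookkeeping of bound $\m$-variables, kept distinct from $\a$ and from the free variables of $N$ by the usual renaming convention, is precisely what makes this case delicate; the remaining root cases are routine substitution-commutation checks, and each contributes at least one reduction step, which is all the lifting argument requires.
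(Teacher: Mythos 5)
Your proposal is correct, and for Point~1 it coincides with the paper's argument: the paper proves exactly your lifting lemma ($M\ra_{\cal R} U$ implies $M[x:=N]\ra_{\cal R} U[x:=N]$, by induction on $M$) and then concludes by induction on $\eta(M[x:=N])$, which is just the well-founded rephrasing of your contrapositive ``concatenate the lifted steps into an infinite reduction'' argument.

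Where you genuinely add something is Point~2. The paper disposes of it with ``the other case being similar,'' which tacitly suggests a one-step lifting lemma for $\m$-substitution; as you observe, that one-step version is false. For a root $\r$-redex $[\a]\m\g.P \ra_{\r} P[\g:=\a]$, the substituted term $[\a](\m\g.P[\a:=_r N])N$ is no longer a $\r$-redex and needs two steps ($\m$ then $\r$), together with the commutation $P[\a:=_r N][\g:=_r N][\g:=\a]=(P[\g:=\a])[\a:=_r N]$ (valid under the renaming convention $\g\neq\a$, $\g\notin{\rm fv}(N)$), to reach $U[\a:=_r N]$. Your reformulation with $\ra_{\cal R}^{+}$ (at least one step) is exactly the right fix: it is all that either the infinite-sequence argument or the paper's $\eta$-induction requires, since $\eta$ still strictly decreases along a non-empty reduction. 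The remaining root cases ($\b$, $\m$, $\th$, $\e$, and $\r$ with $\b\neq\a$) lift in a single step via the substitution commutations you cite, including Lemma~\ref{triva} for the $\e$-case. So your proof is complete and, for Point~2, more honest than the paper's; the only cost is the extra case analysis needed to establish the $\ra_{\cal R}^{+}$ lifting lemma.
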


\begin{proof}
We deal only with Point 1, the other case being similar. Let $M\ra_{\cal R} U$. We prove that $U\in {\cal SN}_{\cal R}$. We argue by induction on $\eta(M[x:=N])$. From the assumption $M\ra_{\cal R} U$, we deduce $M[x:=N]\ra_{\cal R} U[x:=N]$ by the previous lemma. Since $M[x:=N]\in {\cal SN}_{\cal R}$, we have $U[x:=N]\in {\cal SN}_{\cal R}$ and $\eta(U[x:=N]) < \eta(M[x:=N])$, which, by IH, gives the result. 
\end{proof}

\begin{lemma}[Condition (C5)] \label{C5sn}
Let $(M[x:=N])\bar{P} \in {\B}$, $N \in {\B}$ and \\$(\l x.M)N\bar{P}  \in  {\cal T}_t$. Then $(\l x.M)N\bar{P}  \in  {\B}$.
\end{lemma}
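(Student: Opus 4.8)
The plan is to prove that $(\l x.M)N\bar{P} \in {\cal SN}_{\cal R}$ by exhibiting, for every one-step reduct $N'$ of $(\l x.M)N\bar{P}$, that $N' \in {\cal SN}_{\cal R}$, and then invoking the characterization that $M \in {\cal SN}_{\cal R}$ iff every one-step reduct of $M$ is in ${\cal SN}_{\cal R}$. The natural induction is a lexicographic one on the tuple
\[
\bigl(\eta(M[x:=N]) + \eta(N) + \textstyle\sum_k \eta(P_k),\; cxty(M)\bigr),
\]
or some similar measure combining the longest reduction lengths of the pieces with a structural complexity of $M$. First I would record the hypotheses: $(M[x:=N])\bar{P} \in \B \subseteq {\cal SN}_{\cal R}$, and $N \in \B \subseteq {\cal SN}_{\cal R}$. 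From $(M[x:=N])\bar{P} \in {\cal SN}_{\cal R}$ one immediately gets $M[x:=N] \in {\cal SN}_{\cal R}$ (as a subterm), and then, by Lemma \ref{propagaSN}(1), $M \in {\cal SN}_{\cal R}$; each $P_k$ is likewise strongly normalizable as a subterm of the given term. So all the constituent terms are in ${\cal SN}_{\cal R}$, which is what makes the induction measures finite.

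The core of the argument is the case analysis on which redex is contracted in $(\l x.M)N\bar{P}$. The decisive case is the head $\beta$-reduction $(\l x.M)N\bar{P} \ra_\b (M[x:=N])\bar{P}$: here the reduct is exactly the term assumed to be in $\B$, hence in ${\cal SN}_{\cal R}$, so nothing more is needed. Every other redex occurs \emph{inside} one of the components $M$, $N$, or some $P_k$, or is created by an interaction at one of the application nodes $(\,\cdot\,)N$ or $(\,\cdot\,)P_k$. For an internal reduction in $N$ (say $N \ra_{\cal R} N'$) one needs that $(M[x:=N'])\bar{P}$ and $N'$ are still in $\B$; this is where I would lean on the fact that $\B = {\cal SN}_{\cal R} \cap {\cal T}_t$ is closed under reduction (subject reduction, Theorem \ref{SR}, keeps us in ${\cal T}_t$, and ${\cal SN}_{\cal R}$ is trivially reduction-closed), so that the measure strictly decreases and IH applies. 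Reductions internal to $M$ or the $P_k$ are handled the same way, using that the corresponding contracted whole term has a strictly smaller measure. Reductions at an application node $(\,\cdot\,)P_k$ where the functional part has become a $\l$-, $\m$-, or (after $\b$) other abstraction are the combinatorially fiddly subcases, but each produces a term that is a reduct of $(M[x:=N])\bar{P}$ or is covered by the lexicographic decrease in $cxty(M)$.

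The main obstacle I anticipate is organizing the case where the functional head $\l x.M$ is itself \emph{not} the redex but a redex arises between $\bar{P}$ entries or between $N$ and the surrounding context — and more subtly, ensuring that whenever a component reduces, the \emph{substituted} term $(M[x:=N])\bar{P}$ correspondingly reduces (or remains in $\B$) so that IH is genuinely applicable with a smaller measure. Concretely, if $N \ra_{\cal R} N'$ one would want $M[x:=N] \two_{\cal R} M[x:=N']$, which holds because $x$ may occur several (possibly zero) times in $M$; the subtlety is that the number of occurrences of $x$ affects whether $\eta$ strictly decreases, so I would phrase the measure in terms of $\eta((M[x:=N])\bar P)$ together with $\eta(N)$ and rely on the substitution lemma (the unnamed lemma just before Lemma \ref{propagaSN}) to transport single-step reductions through the substitution. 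Once the bookkeeping of the measure is set up so that each non-head case strictly decreases it while the head $\b$-case lands directly in $\B$, the proof closes by the ${\cal SN}_{\cal R}$ characterization.
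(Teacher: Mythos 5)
Your proposal is correct and follows essentially the same route as the paper: induction on the sum of the $\eta$'s of the components, a two-way case split in which the head $\beta$-step lands directly in the hypothesis $(M[x:=N])\bar{P}\in\B$ and every internal step is absorbed by the induction hypothesis via closure of $\B$ under reduction. The ``fiddly'' application-node subcases you anticipate do not in fact arise in a single reduction step, since the functional part of each application node $(\,\cdot\,)P_k$ remains an application, so the paper's proof needs only the two cases you already identified as central.
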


\begin{proof}
Firstly, we observe that $(M[x:=N])\bar{P} \in {\cal SN}_{\cal R}$ implies $M$, $\bar{P} \in {\cal SN}_{\cal R}$. We proceed by induction on $\eta(M)+\eta(N)+\eta(\bar{P})$ 
where $\eta(\bar{P})$ is the sum of the $\eta$s of the terms of the sequence $\bar{P}$. Let   $(\l x.M)N\bar{P} \ra_{\cal R} Q$. We examine two cases.
\begin{itemize}
\item If $Q = (\l x.M')N'\bar{P'} $ with $M \ra_{\cal R} M'$ or $N \ra_{\cal R} N'$ or $\bar{P}  \ra_{\cal R} \bar{P'}$, then we can apply IH.
\item If $Q = (M[x:=N])\bar{P}$, we have the result by assumption.
\end{itemize} 
\end{proof}

\begin{lemma}[Condition (C6)] \label{C6sn}
Let $\mu \a.M[\a:=\bar{N}] \in \B$, $\bar{N} \in {\B}^{<\omega}$ and \\$(\mu \a.M)\bar{N} \in  {\cal T}_t$. Then $(\mu \a.M)\bar{N} \in  {\B}$.
\end{lemma}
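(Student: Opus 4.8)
The plan is to mirror the proof of condition (C5) (Lemma \ref{C5sn}), adapting it to head $\m$-reduction; the essential new difficulty is that the operator $\m$ does not disappear when its redex is contracted, so the number of arguments must be tracked explicitly. Write $\bar N = N_1\dots N_n$. If $n=0$ the claim is immediate, since then $\m\a.M=\m\a.M[\a:=\bar N]\in\B$ is exactly the hypothesis, so assume $n\geq 1$. First I would record the facts needed to run the induction. Since $\m\a.M[\a:=\bar N]\in\B\subseteq{\cal SN}_{\cal R}$, its subterm $M[\a:=\bar N]$ is strongly normalizable, and by the $\m$-substitution analogue of the substitution-lifting lemma (the variant used in the proof of Lemma \ref{propagaSN}(2)) any infinite reduction of $M$ would lift to one of $M[\a:=\bar N]$; hence $M\in{\cal SN}_{\cal R}$. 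Each $N_k\in\B\subseteq{\cal SN}_{\cal R}$ by hypothesis, so the quantities $\eta(M)$, $\eta(N_k)$, $cxty(M)$ used below are well defined. Typability of all intermediate terms is guaranteed by subject reduction (Theorem \ref{SR}), so it only remains to prove $(\m\a.M)\bar N\in{\cal SN}_{\cal R}$.

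I would then argue by induction on the triple $(n,\ \eta(M)+\eta(\bar N),\ cxty(M))$ ordered lexicographically, where $\eta(\bar N)=\sum_{k}\eta(N_k)$. It suffices to show that every one-step reduct $Q$ of $(\m\a.M)\bar N$ lies in $\B$. Because $\m'$ is absent, the only redex whose root is an outer application is the head $\m$-redex $(\m\a.M)N_1$, while the subterm $\m\a.M$ can reduce at its root only by $\theta$ or $\e$; every remaining reduction occurs strictly inside $M$ or inside some $N_k$. This yields five cases.

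For an internal reduction $M\ra_{\cal R}M'$ (resp. $N_k\ra_{\cal R}N_k'$), $n$ is unchanged while $\eta(M)+\eta(\bar N)$ strictly drops; moreover $\m\a.M[\a:=\bar N]$ reduces to $\m\a.M'[\a:=\bar N]$ (resp. to the term with $N_k$ replaced) by substitution-lifting, so the head-reduct hypothesis is preserved and the IH applies. For the head $\m$-step the reduct is $(\m\a.M[\a:=N_1])N_2\dots N_n$; here $n$ strictly decreases, the new head reduct $\m\a.(M[\a:=N_1])[\a:=N_2\dots N_n]$ is literally $\m\a.M[\a:=\bar N]\in\B$, and the remaining arguments still lie in $\B$, so the IH applies directly. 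If $M=[\a]P$ with $\a\notin{\rm fv}(P)$ and the root $\theta$-rule fires, the reduct is $(P)\bar N$; since the variable convention gives $\a\notin{\rm fv}(\bar N)$, one checks $M[\a:=\bar N]=[\a](P)\bar N$, whence $\m\a.M[\a:=\bar N]\ra_{\theta}(P)\bar N$, so $(P)\bar N$ is a reduct of a term of $\B$ and is therefore strongly normalizable with no appeal to the IH.

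The main obstacle is the $\e$-case, $M=\m\b.M''$, where the reduct is $(\m\a.M''_\b)\bar N$. I would first use the commutation identity $(M''[\a:=\bar N])_\b=(M''_\b)[\a:=\bar N]$ (the $\m$-substitution instance of Lemma \ref{triva} together with $\b\notin{\rm fv}(\bar N)$) to identify $\m\a.(M''_\b)[\a:=\bar N]$ as the $\e$-reduct of $\m\a.M[\a:=\bar N]=\m\a.\m\b.M''[\a:=\bar N]$, so the head-reduct hypothesis holds again. The delicate point is the measure: $n$ is unchanged and, by Lemma \ref{etaMalpha}, $\eta(M''_\b)\le\eta(M'')\le\eta(\m\b.M'')=\eta(M)$, so the second component does not increase but may fail to decrease. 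This is precisely why $cxty(M)$ appears as the third component: $cxty(M''_\b)\le cxty(M'')<cxty(\m\b.M'')=cxty(M)$, so the triple strictly decreases and the IH delivers $(\m\a.M''_\b)\bar N\in\B$. Reconciling the $\a$-translation through the $\m$-substitution and choosing a measure robust to the fact that $\eta$ need not drop is the crux of the argument; the four other cases are a routine transcription of the reasoning behind Lemma \ref{C5sn}.
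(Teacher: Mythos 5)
Your proof is correct and follows the same basic strategy as the paper's: reduce the claim to strong normalizability, then induct on an $\eta$-based measure while analysing the one-step reducts of $(\m \a.M)\bar{N}$. If anything, your case analysis is more complete than the one printed in the paper, which records only the internal reductions and the head $\m$-step and passes silently over the reducts obtained when the subterm $\m \a.M$ is itself an $\e$- or $\th$-redex (i.e.\ $M=\m\b.M''$, or $M=[\a]P$ with $\a\notin{\rm fv}(P)$); your treatment of these two cases --- deriving the new head-reduct hypothesis from the commutation of the $\a$-translation with $\m$-substitution (Lemma \ref{triva}), and adding $cxty(M)$ as a tie-breaker because $\eta$ need not strictly drop under $\e$ --- is exactly what is needed to close that gap. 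The only other difference is presentational: the paper organises the recursion on the number of arguments by peeling off one $N$ at a time from $M[\a:=\bar{N}]=M[\a:=N_1]\dots[\a:=N_n]$ and proving a one-argument statement, whereas you place $n$ as the first component of a lexicographic triple; the two bookkeeping devices are interchangeable.
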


\begin{proof}
We observe that, by Lemma \ref{propagaSN}, $\mu \a.M[\a:=\bar{N}] \in {\cal SN}_{\cal R}$ implies $M \in {\cal SN}_{\cal R}$. 
We note that, if $\bar{N} = N_1N_2\dots N_n$, then $M[\a:=\bar{N}] = M[\a:=N_1][\a:=N_2] \dots [\a:=N_n]$.
Therefore, we need to prove that, if $(\mu \a.M[\a:=N]) \bar{P} \in {\cal SN}_{\cal R}$ with $N \in {\cal SN}_{\cal R}$ and $\bar{P}\in {\cal SN}_{\cal R}^{<\omega}$,  then $(\mu \a.M)N\bar{P} \in  {\cal SN}_{\cal R}$.
We proceed by induction on $\eta(M)+\eta(N)+\eta(\bar{P})$ 
where $\eta(\bar{P})$ is the sum of the $\eta$s of the terms of the sequence $\bar{P}$. Let   $(\mu \a.M)N\bar{P}  \ra_{\cal R} Q$. We examine two cases.
\begin{itemize}
\item If $Q = (\mu \a.M')N'\bar{P'} $ with $M \ra_{\cal R} M'$  or $N \ra_{\cal R} N'$ or $\bar{P}  \ra_{\cal R} \bar{P'}$, then we can apply IH.
\item If $Q = (\mu \a. M[\a:=N])\bar{P}$, we have the result.
\end{itemize} 
\end{proof}

We are now in a position to state the main result of this section.

\begin{theorem}\label{SN}
$\B$ is a saturated set.
\end{theorem}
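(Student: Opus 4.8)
The plan is to appeal directly to Definition \ref{def:RBotgood}. To show that $\B = \mathcal{SN}_{\cal R} \cap \mathcal{T}_t$ is saturated, I first confirm the ambient requirement $\B \subseteq \mathcal{T}_t$, which is immediate from the definition of $\B$ as an intersection with $\mathcal{T}_t$, and then verify each of the six closure conditions (C1)--(C6) in turn. Since every one of these conditions has already been isolated as a separate lemma in the preceding development, the proof reduces to collecting them: (C1) is Lemma \ref{C1sn}, (C2) is Lemma \ref{C2sn}, (C3) is Lemma \ref{C3sn}, (C4) is Lemma \ref{C4sn}, (C5) is Lemma \ref{C5sn}, and (C6) is Lemma \ref{C6sn}. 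Assembling these six statements yields exactly the content of Definition \ref{def:RBotgood}, so $\B$ is saturated.

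The substantive work lies inside these lemmas rather than in the assembly. The step I expect to be the genuine obstacle --- and the one that forces the restriction to typed terms --- is condition (C2), closure under $\mu$-abstraction. As the example $M = \m\b.([\b]\l x.(x)x)\l x.(x)x$ shows, $M \in \mathcal{SN}_{\cal R}$ does not imply $\m\a.M \in \mathcal{SN}_{\cal R}$ in the untyped setting, because an $\varepsilon$-step can expose a non-normalizing subterm. Handling this requires the auxiliary analysis of the $\a$-translation furnished by Lemmas \ref{Malphasn} and \ref{etaMalpha}, establishing that $M \in \mathcal{SN}_{\cal R}$ entails $M_\a \in \mathcal{SN}_{\cal R}$ with $\eta(M_\a) \leq \eta(M)$, together with a lexicographic induction on $(\eta(M), cxty(M))$ to control the $\varepsilon$-redex that surfaces when $M$ itself begins with a $\mu$.

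The remaining conditions are comparatively routine and do not rely on typability: (C1) and (C4) follow by simple inductions on reduction length, while (C5) and (C6) are the expected head-reduction saturation arguments, each proved by induction on the sum of the $\eta$-measures of the components, after first invoking Lemma \ref{propagaSN} to recover strong normalizability of the subterms from that of the contractum. Once $\B$ is known to be saturated, Corollary \ref{closed} gives $\B = \mathcal{T}_t$ at once, that is, every typable term is strongly normalizing with respect to $\b\m\r\theta\varepsilon$-reduction, which is the intended conclusion of the section.
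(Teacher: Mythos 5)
Your proof is correct and matches the paper's own argument, which likewise establishes the theorem by assembling Lemmas \ref{C1sn}--\ref{C6sn} verifying conditions (C1)--(C6) of Definition \ref{def:RBotgood}. Your additional commentary on where the substantive difficulty lies (condition (C2) and the $\a$-translation analysis) accurately reflects the structure of the preceding development.
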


\begin{proof}
We put together Lemmas \ref{C1sn}, \ref{C2sn}, \ref{C3sn}, \ref{C4sn}, \ref{C5sn} and \ref{C6sn}.
\end{proof}

\begin{corollary}\label{sncorollary}
If $M  \in {\cal T}_t$, then $M \in  {\cal SN}_{\b\m\r\th\e}$.
\end{corollary}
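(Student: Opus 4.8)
The plan is to derive the corollary as an immediate consequence of Theorem~\ref{SN} together with the uniqueness result established in Corollary~\ref{closed}. The statement to be proved is that every typable $\l\m$-term is strongly normalizable with respect to $\b\m\r\th\e$-reduction; since the base set was chosen as $\B = \mathcal{SN}_{\cal R} \cap \mathcal{T}_t$ with ${\cal R} = \{\beta, \mu, \rho, \theta, \eta\}$, the whole point is to identify membership in $\B$ with typability.

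First I would invoke Theorem~\ref{SN}, which asserts that this particular $\B$ is a saturated set. Then, by Corollary~\ref{closed}, which guarantees that $\mathcal{T}_t$ is the \emph{unique} saturated set, we conclude $\B = \mathcal{T}_t$. Spelling this out: the inclusion $\B \subseteq \mathcal{T}_t$ holds by the very definition of $\B$, and Corollary~\ref{closed} supplies the reverse inclusion $\mathcal{T}_t \subseteq \B$, since every saturated set must coincide with $\mathcal{T}_t$. Combining these, if $M \in \mathcal{T}_t$, then $M \in \B = \mathcal{SN}_{\cal R} \cap \mathcal{T}_t \subseteq \mathcal{SN}_{\cal R}$, which is precisely the claim that $M \in \mathcal{SN}_{\b\m\r\th\e}$.

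I do not anticipate a genuine obstacle in this final step, as all the real work has been front-loaded into the verification of the six saturation conditions (Lemmas~\ref{C1sn} through \ref{C6sn}, assembled in Theorem~\ref{SN}) and into the correctness-theorem machinery underlying Corollary~\ref{closed}. The only point requiring mild care is purely notational: one must confirm that the reduction system named ${\cal R} = \{\beta,\mu,\rho,\theta,\eta\}$ in the section matches the $\b\m\r\th\e$ appearing in the corollary's subscript, so that $\mathcal{SN}_{\cal R}$ and $\mathcal{SN}_{\b\m\r\th\e}$ denote the same set. Once this identification is made explicit, the proof reduces to the two-line chain of inclusions described above.
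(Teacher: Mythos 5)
Your proposal is correct and follows exactly the paper's own (two-line) proof: apply Theorem~\ref{SN} to conclude that $\B = \mathcal{SN}_{\cal R} \cap \mathcal{T}_t$ is saturated, then Corollary~\ref{closed} to get $\B = \mathcal{T}_t$, hence $\mathcal{T}_t \subseteq \mathcal{SN}_{\b\m\r\th\e}$. Your notational caution is also warranted, since the paper's ``$\eta$'' in ${\cal R} = \{\beta,\mu,\rho,\theta,\eta\}$ is evidently a typo for $\varepsilon$.
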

   
\begin{proof}
We apply Corollary \ref{closed} and Theorem \ref{SN}.
\end{proof}

\section{Weak normalization property of $\b\m\m'\r\e\th$-reduction}

In this section, we prove that $\b\m\r\e\th$-reduction augmented with the $\m'$-rule has the weak normalization property. In \cite{BN3}, we provided a syntactic proof of this result. However, that proof was notably complex, involving the strong normalization of $\beta$-reduction within a typed framework and the weak normalization of the other reductions within an untyped framework. The most challenging part was devising an algorithm to combine the two normalization results to establish the weak normalization of the set of all reductions. The proof presented here takes a different approach. It relies on the results of Section 3 by choosing a suitable set that satisfies all the properties required for the saturation. We observe that condition (C4) is the most difficult one to establish, and, interestingly, it will be formulated and verified within the untyped context.  While this proof for the weak normalization property is elegant and concise, it lacks the constructive aspect found in the previous proof \cite{BN3}, and as a result, we are unable to extract a concrete normalization algorithm.

Additionally, it is worth noting that, except for verifying condition (C2) for $\B'$, none of the other properties require typability assumptions. While the formulations of the lemmas mention typability assumptions to align with Definition \ref{def:RBotgood}, we will present proofs that do not rely on types for the conditions with the exception of (C2).

We first show in the next two lemmas that, for the proof of the weak normalization property of $\b\m\m'\r\e\th$-reduction, it is sufficient to verify that $\b\m\m'\r\e$-reduction enjoys weak normalization. This is because we can add $\theta$-reduction to the set of reductions without compromising the weak normalization property.

\begin{lemma} \label{theta1}
Let us suppose $M \in {\cal NF}_{\b\m\m'\r\e}$ and $M \two_{\th} N$ for some $M$, $N\in \mathcal{T}$. Then $N \in {\cal NF}_{\b\m\m'\r\e}$ and, if $N$ starts with $\l$ (with $\m$, resp.), then  $M$  also starts with $\l$ or $\m$ (with $\m$, resp.).
\end{lemma}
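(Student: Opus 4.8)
The plan is to reduce to the one-step situation and then to exploit the fact that the normal-form hypothesis severely constrains where a $\th$-redex can sit inside $M$. Write the reduction $M \two_\th N$ as a chain $M = M_0 \ra_\th M_1 \ra_\th \cdots \ra_\th M_k = N$. I would first prove the single-step statement: if $M_i \in {\cal NF}_{\b\m\m'\r\e}$ and $M_i \ra_\th M_{i+1}$, then $M_{i+1} \in {\cal NF}_{\b\m\m'\r\e}$, and moreover either (a) $M_i$ is itself the contracted $\th$-redex, so $M_i = \m\a.[\a]M_{i+1}$ begins with $\m$, or (b) the outermost constructors of $M_i$ and $M_{i+1}$ coincide. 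The whole lemma then follows by iterating this step.

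The heart of the one-step argument is a case analysis on the immediate context of the contracted $\th$-redex $R = \m\a.[\a]P$ (with $\a \notin {\rm fv}(P)$, contracting to $P$). Since $R$ is a $\m$-abstraction, every candidate for its immediate parent would create a forbidden redex in $M_i$: a parent $\m\b.[\cdot]$ yields an $\e$-redex, a bracket $[\b][\cdot]$ yields a $\r$-redex, a function position $(\cdot)Q$ yields a $\m$-redex, and an argument position $(Q)[\cdot]$ yields a $\m'$-redex. As $M_i \in {\cal NF}_{\b\m\m'\r\e}$, all of these are impossible, so the only admissible situations are that $R$ is the whole term (case (a)) or that $R$ is the body of a $\l$-abstraction (case (b)). In case (a), $M_{i+1} = P$ is a subterm of $M_i$, hence lies in ${\cal NF}_{\b\m\m'\r\e}$. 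In case (b), the contraction replaces $\l x.R$ by $\l x.P$; this introduces no redex at the interface (a term $\l x.P$ is never itself a redex, whatever the head of $P$) and leaves the outermost constructor of the whole term unchanged, so $M_{i+1} \in {\cal NF}_{\b\m\m'\r\e}$ and its outermost constructor equals that of $M_i$. To complete the preservation claim I would add the standard residual remark: any $\b\m\m'\r\e$-redex of $M_{i+1}$ that is not one of the interface cases just examined would already occur in $M_i$, contradicting the hypothesis.

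For the ``starts with'' dichotomy over the full chain I would argue by backward induction on $i$, tracking the outermost constructor. Reading a single step $M_i \ra_\th M_{i+1}$ backward, the outermost constructor of $M_i$ either equals that of $M_{i+1}$ (case (b)) or is $\m$ (case (a)). Consequently, if $N = M_k$ begins with $\m$, then every $M_i$ begins with $\m$ (the value $\m$ is stable under the backward step), so in particular $M = M_0$ begins with $\m$; and if $N$ begins with $\l$, then every $M_i$ begins with $\l$ or $\m$ (this property is stable under the backward step), so $M$ begins with $\l$ or $\m$. This is exactly the stated dichotomy.

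I expect the main obstacle to be the rigorous localization step: showing that the hypothesis $M_i \in {\cal NF}_{\b\m\m'\r\e}$ forbids every parent of the $\th$-redex except $\l x.[\cdot]$ and the top. Once this localization is in hand, both the preservation of ${\cal NF}_{\b\m\m'\r\e}$ and the control of the head constructor follow smoothly, and some care is only needed in the residual argument to confirm that no new redex is created away from the contracted position.
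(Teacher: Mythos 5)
Your proof is correct, but it takes a genuinely different route from the paper's. The paper establishes the one-step statement by structural induction on $M$, threading the head-constructor information through each constructor case (e.g.\ for $M=[\a]M'$ it uses that $M'$ cannot start with $\m$ because $M$ is normal, and the induction hypothesis to conclude that the reduct of $M'$ cannot start with $\m$ either, so no $\r$-redex is created at the interface). You instead localize the contracted $\th$-redex directly: since $\m\a.[\a]P$ is a $\m$-abstraction, each of the possible parent contexts $\m\b.[\cdot]$, $[\b][\cdot]$, $([\cdot])Q$ and $(Q)[\cdot]$ would already constitute an $\e$-, $\r$-, $\m$- or $\m'$-redex in $M$, so in a $\b\m\m'\r\e$-normal form a $\th$-redex can only sit at the root of the term or immediately under a $\l$. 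This is a different and arguably more illuminating decomposition: it isolates the structural reason the lemma holds and makes the head-constructor dichotomy an immediate corollary of the two admissible positions, whereas the paper's induction buys a more routine, mechanically checkable case analysis. The one point you should state with care is the residual argument in case (b): the relevant observation is not only that $\l x.P$ is not itself a redex, but that every $\b\m\m'\r\e$-redex pattern inspects at most the root constructor of a node and of its immediate children, so replacing $\l x.R$ by $\l x.P$ (same root constructor $\l$) cannot make any surrounding context into a redex that was not already one in $M$ (for instance $(\l x.P)Q$ would come from the $\b$-redex $(\l x.R)Q$). Your closing ``residual'' remark covers exactly this, and with it the argument is complete.
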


\begin{proof}
By induction on $M$. It suffices to check the property for one-step of reduction.
\begin{itemize}
\item If $M = \l x.M'$, the result is trivial.
\item If $M = \m \a.M'$ and $N =  \mu \a. N'$ where $M' \ra_{\th} N'$, then, by IH, 
$N' \in {\cal NF}_{\b\m\m'\r\e}$ and $N'$ does not start with $\m$. Thus  $N \in {\cal NF}_{\b\m\m'\r\e}$ and the second property is obviously verified.
\item If $M = \m \a.[\a]M'$, $\a \not\in {\rm fv}(M')$ and $N =  M'$, then 
$M' \in {\cal NF}_{\b\m\m'\r\e}$ and the second property is obviously verified.
\item If $M = [\a]M'$, then $N =  [\a] N'$ where $M' \ra_{\th} N'$. Then, by IH, 
$N' \in {\cal NF}_{\b\m\m'\r\e}$ and $N'$ does not start with $\m$. Hence,  $N \in {\cal NF}_{\b\m\m'\r\e}$.
\item If $M = (M_1)M_2$, $N=(M'_1)M_2$ and  $M_1 \ra_{\th} M'_1$, then $M'_1,M_2 \in {\cal NF}_{\b\m\m'\r\e}$ and $M'_1$ does not start with $\m$ or $\l$. Hence,  $N \in {\cal NF}_{\b\m\m'\r\e}$.
\item If $M = (M_1)M_2$, $N=(M_1)M'_2$ and  $M_2 \ra_{\th} M'_2$, then $M_1,M'_2 \in {\cal NF}_{\b\m\m'\r\e}$ and $M'_2$ 
does not start with $\m$. Hence, $N \in {\cal NF}_{\b\m\m'\r\e}$.
\end{itemize}
\end{proof}

\begin{lemma}\label{theta2}
The $\theta$-reduction strongly normalizes.	
\end{lemma}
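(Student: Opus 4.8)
The plan is to show that the $\theta$-reduction relation, taken in isolation, admits no infinite reduction sequence. The key observation is that a $\theta$-reduction $\mu\alpha.[\alpha]M \ra_\theta M$ (with $\alpha\notin\mathrm{fv}(M)$) strictly decreases the size of the term: the contractum $M$ is a proper subterm of the redex, so it removes exactly one $\mu$-abstraction and one bracket construct. Since the remaining term-forming operations ($\lambda$-abstraction, application, bracketing, $\mu$-abstraction) are all monotone with respect to subterm replacement, performing a $\theta$-contraction anywhere inside a term reduces its overall complexity.

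Concretely, I would use the complexity measure $cxty$ introduced in Definition~\ref{def:defterms}, for which $cxty(\mu\alpha.[\alpha]M) = cxty(M)+2 > cxty(M)$. First I would verify that, for any single-step reduction $M \ra_\theta N$, we have $cxty(N) < cxty(M)$. This follows by a routine induction on the structure of $M$ that locates the contracted redex: in the base case the redex is at the root, where the inequality $cxty(\mu\alpha.[\alpha]P) = cxty(P)+2 > cxty(P)$ holds directly; in the inductive cases the redex lies strictly inside one immediate subterm, and since $cxty$ of a compound term is the sum of the complexities of its constituents plus a positive constant, a strict decrease in one subterm propagates to a strict decrease in the whole term.

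Having established that each $\theta$-step strictly lowers the nonnegative integer quantity $cxty$, strong normalization of $\theta$ follows immediately: any reduction sequence $M \ra_\theta M_1 \ra_\theta M_2 \ra_\theta \cdots$ would yield a strictly decreasing sequence $cxty(M) > cxty(M_1) > cxty(M_2) > \cdots$ of natural numbers, which cannot be infinite. Hence every $\theta$-reduction path from $M$ terminates, so $M \in \mathcal{SN}_\theta$.

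The argument is entirely routine, and I do not anticipate a genuine obstacle; the only point requiring mild care is the bookkeeping in the inductive step, namely confirming that $cxty$ is strictly monotone in each argument so that a decrease in a subterm is never cancelled out. Given the explicit additive definition of $cxty$, this is immediate, so no subtlety arises. An alternative, which I would mention as even simpler, is to note that the contractum of a $\theta$-redex is always a subterm of the redex, so one could equally well argue directly by a well-founded subterm-based measure without invoking $cxty$ at all.
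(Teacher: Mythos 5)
Your proposal is correct and matches the paper's proof, which simply observes that $\theta$-reduction decreases the size of terms; your elaboration via the strict monotonicity of $cxty$ under subterm replacement is just the routine detail behind that observation.
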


\begin{proof}
We observe that $\theta$-reduction decreases the size of the terms.
\end{proof}

\begin{theorem}\label{conc}
${\cal WN}_{\b \m \m' \rho \e} \subseteq {\cal WN}_{\b \m \m' \rho \e \th}$ holds true.
\end{theorem}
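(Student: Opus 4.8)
The plan is to exhibit, for any $M \in {\cal WN}_{\b\m\m'\r\e}$, a concrete $\b\m\m'\r\e\th$-reduction to a full normal form, built in two phases: first a $\b\m\m'\r\e$-normalizing phase, then a terminating phase of pure $\theta$-reductions appended at the end. Concretely, since $M \in {\cal WN}_{\b\m\m'\r\e}$ there is some $M_0 \in {\cal NF}_{\b\m\m'\r\e}$ with $M \two_{\b\m\m'\r\e} M_0$. As every $\b\m\m'\r\e$-step is in particular a $\b\m\m'\r\e\th$-step, this same sequence already gives $M \two_{\b\m\m'\r\e\th} M_0$, so the first phase costs nothing.

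Next I would normalize $M_0$ with respect to $\theta$ alone. By Lemma \ref{theta2}, $\theta$-reduction is strongly normalizing, so any maximal $\theta$-reduction path issuing from $M_0$ terminates in some $M^* \in {\cal NF}_\th$, with $M_0 \two_\th M^*$. The only thing left to establish is that this $M^*$ is a normal form for the \emph{whole} system, i.e.\ $M^* \in {\cal NF}_{\b\m\m'\r\e\th}$.

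This is exactly where Lemma \ref{theta1} does the decisive work: since $M_0 \in {\cal NF}_{\b\m\m'\r\e}$ and $M_0 \two_\th M^*$, the lemma yields directly that $M^* \in {\cal NF}_{\b\m\m'\r\e}$, that is, the $\theta$-phase never recreates a $\b$-, $\m$-, $\m'$-, $\r$- or $\e$-redex. Combined with $M^* \in {\cal NF}_\th$ and the identity ${\cal NF}_{\b\m\m'\r\e} \cap {\cal NF}_\th = {\cal NF}_{\b\m\m'\r\e\th}$, we obtain $M^* \in {\cal NF}_{\b\m\m'\r\e\th}$. Chaining the two phases gives $M \two_{\b\m\m'\r\e\th} M_0 \two_\th M^*$ with $M^*$ a $\b\m\m'\r\e\th$-normal form, which is precisely $M \in {\cal WN}_{\b\m\m'\r\e\th}$, as required.

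The genuinely delicate point is not in this assembly but is already packaged inside Lemma \ref{theta1}, namely its ``starts with'' clause: contracting a $\theta$-redex $\m\a.[\a]P \to P$ could a priori expose a head $\l$ or $\m$ and thereby turn a surrounding application into a fresh $\b$- or $\m$-redex (or a fresh $\m'$-redex on the argument side), which would undo the normalization achieved in the first phase. The ``starts with'' property rules this out, and so guarantees that the clean ``first $\b\m\m'\r\e$, then $\theta$'' ordering actually reaches a total normal form. Granting that lemma, the theorem reduces to the purely organizational argument above, requiring no additional induction at this level.
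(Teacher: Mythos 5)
Your proof is correct and follows essentially the same route as the paper: first reach a $\b\m\m'\r\e$-normal form $M_0$, then $\theta$-normalize it (termination by Lemma \ref{theta2}), and invoke Lemma \ref{theta1} to guarantee that the $\theta$-phase creates no new $\b$-, $\m$-, $\m'$-, $\r$- or $\e$-redexes, so the result is normal for the full system. Your added commentary on why the ``starts with'' clause of Lemma \ref{theta1} is the real content is accurate but not something the paper spells out at this point.
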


\begin{proof} 
If $M \in {\cal WN}_{\b \m \m' \rho \e}$, then $\exists$ $M'$ such that $M \two_{\b \m \m' \rho \e} M'$ and $M' \in  {\cal NF}_{\b \m \m' \rho \e}$. Thus, by Lemmas \ref{theta1} and \ref{theta2}, $\exists$ $N \in {\cal NF}_{\b \m \m' \rho \e \th}$ such that $M' \two_{\th}N$.\\
\end{proof}

Let us now recall the example of our paper \cite{BN3} which shows that with the reduction $\m'$ we lose the strong normalization property.
Let $M=(\m \b.U)U$ where $U  =\m \a. [\a][\a]x$, then $x:\bot \v M : \bot$ and there are $M_1$, $M_2$, $M_3$ such that 
$M \ra_{\m'} M_1 \ra_{\m} M_2 \ra_{\r} M_3 \ra_{\th} M$, which means $M \not \in {\cal SN}_{\b\m\m'\r\e\th}$. Hence, we cannot hope to prove that $\b\m\m'\r\e$-reduction strongly normalizes. Instead, we intend to show that $\b\m\m'\r\e$-reduction enjoys the weak normalization property.

In what follows, we let ${{\cal R}'} = \{\b,\m,\m',\r,\e\} $ and $\B' = {\cal WN}_{\cal{R}'} \cap {\cal T}_t$. Our aim is to verify that $\B'$ is saturated, from which, by Corollary \ref{closed}, it follows that every typable term is weakly normalizable. 
We verify the conditions of Definition \ref{def:RBotgood} one by one below. Beforehand, we deal with some lemmas that will be needed in the proofs. Lemma \ref{alphabetawn} is similar to Lemma \ref{alphabetasn}.

\begin{lemma} Let $M\in {\cal T}$ and $\a,\b\in {\cal V}_{\m}$. Then $M\ra_{{\cal R}'}  N$ iff $M[\a:=\b]\ra_{{\cal R}'} N[\a:=\b]$. 
\end{lemma}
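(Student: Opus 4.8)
The plan is to prove the equivalence by structural induction on $M$, viewing it as the assertion that renaming the $\m$-variable $\a$ to $\b$ induces a position-preserving bijection between the ${\cal R}'$-redexes of $M$ and those of $M[\a:=\b]$ that carries contractum to contractum. First I dispose of the trivial case $\a=\b$, and then, invoking the renaming convention on bound variables, I assume throughout that no bound $\m$-variable occurring in $M$ equals $\a$ or $\b$; the case $\a \ne \b$ is the only one requiring work.

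For the forward implication I proceed by induction on $M$. In the congruence cases—where $M$ is an abstraction $\l x.M'$ or $\m\g.M'$, a bracket $[\g]M'$, or an application $(M_1)M_2$ and the contracted redex lies in a proper subterm—the claim follows immediately from the induction hypothesis together with the fact that $[\a:=\b]$ commutes with each term constructor. In the base cases, where $M$ is itself the contracted redex, I verify for each rule of ${\cal R}' = \{\b,\m,\m',\r,\e\}$ that renaming commutes with the formation of the contractum. Concretely this reduces to the commutation identities: $(M[x:=N])[\a:=\b]=(M[\a:=\b])[x:=N[\a:=\b]]$ for $\b$; the analogous identities for the $\m$-substitutions $[\g:=_r N]$ and $[\g:=_l N]$ for $\m$ and $\m'$; the composition law for two $\m$-renamings for $\r$; and, for $\e$, the identity $(M_\d)[\a:=\b]=(M[\a:=\b])_\d$, which is exactly an instance of Lemma \ref{triva}. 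Each of these is a routine induction on the term, entirely parallel to Lemma \ref{triva}.

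For the backward implication I exploit that every rule of ${\cal R}'$ is free of side conditions and that renaming one $\m$-variable to another leaves the syntactic shape at every node unchanged. Consequently a subterm of $M$ is an $r$-redex (for $r \in {\cal R}'$) if and only if the corresponding subterm of $M[\a:=\b]$ is, so the redex occurrences match up bijectively. Since the commutation identities above are genuine equalities, they may be read in either direction; thus from $M[\a:=\b]\ra_{{\cal R}'} N[\a:=\b]$ one recovers the redex of $M$ at the matching position and its contractum $N$, giving $M\ra_{{\cal R}'} N$.

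The main obstacle is purely the bookkeeping of variable clashes. The delicate point is the $\r$-rule $[\g]\m\d.P \ra_\r P[\d:=\g]$, where the renamed variable $\a$ may coincide with the bracketed $\g$, so that $[\a:=\b]$ interacts with the renaming $[\d:=\g]$ built into the contraction; this is handled by the composition law for $\m$-renamings. It is worth stressing that, in contrast to the earlier lemma for ${\cal R}$, here no rule carries a freshness side condition—$\th$, whose condition $\a\notin {\rm fv}(M)$ would have demanded extra care, is absent from ${\cal R}'$—so the redex correspondence is genuinely exact and the argument is clean.
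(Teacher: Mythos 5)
Your proposal is correct and follows essentially the same route as the paper, whose entire proof is ``By induction on $M$''; you simply spell out the congruence cases, the commutation of $[\a:=\b]$ with the various substitutions (including the $(M_\d)[\a:=\b]=(M[\a:=\b])_\d$ instance of Lemma \ref{triva} for $\e$ and the composition of renamings for $\r$), and the absence of side conditions in ${\cal R}'$. Nothing further is needed.
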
 

\begin{proof}
By induction on $M$.
\end{proof}

\begin{lemma}\label{alphabetawn}
Let  $M\in {\cal T}$ and $\a,\b\in {\cal V}_{\m}$. Then $M\in {\cal NF}_{{\cal R}'}$ iff $M[\a:=\b]\in {\cal NF}_{{\cal R}'}$.
\end{lemma}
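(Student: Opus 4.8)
The plan is to prove \autoref{alphabetawn} as an immediate corollary of the preceding lemma, exactly as \autoref{alphabetasn} was derived from its companion lemma in Section~4. The key observation is that the renaming substitution $[\a:=\b]$ is a bijective, redex-preserving operation: it merely replaces one $\mu$-variable by another throughout the term, without creating or destroying any $\b$-, $\m$-, $\m'$-, $\r$-, or $\e$-redex. The preceding lemma captures precisely this: $M\ra_{{\cal R}'}N$ if and only if $M[\a:=\b]\ra_{{\cal R}'}N[\a:=\b]$. Being in ${\cal NF}_{{\cal R}'}$ means admitting no ${\cal R}'$-reduction step at all, so normality should transfer back and forth along the renaming.

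First I would argue the forward direction by contraposition. Suppose $M[\a:=\b]\notin{\cal NF}_{{\cal R}'}$, so there is a term $W$ with $M[\a:=\b]\ra_{{\cal R}'}W$. I would like to pull this reduction back to $M$. The cleanest route is to note that $[\a:=\b]$ has a left inverse on terms whose only occurrences of $\b$ come from the renaming — but to avoid subtleties about $\b$ already occurring in $M$, it is simplest to phrase things via the preceding lemma directly. Concretely, if $M\in{\cal NF}_{{\cal R}'}$ but $M[\a:=\b]\ra_{{\cal R}'}W$, then by the ``only if'' direction of the preceding lemma applied to any reduct of $M$, no such step can exist: were $M$ to have a reduct $N$, the preceding lemma would force $M[\a:=\b]\ra_{{\cal R}'}N[\a:=\b]$, and conversely any step out of $M[\a:=\b]$ must be the image of a step out of $M$. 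Hence $M\in{\cal NF}_{{\cal R}'}$ implies $M[\a:=\b]\in{\cal NF}_{{\cal R}'}$.

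For the reverse direction, suppose $M\notin{\cal NF}_{{\cal R}'}$, so $M\ra_{{\cal R}'}N$ for some $N$. By the ``if'' part of the preceding lemma, $M[\a:=\b]\ra_{{\cal R}'}N[\a:=\b]$, so $M[\a:=\b]\notin{\cal NF}_{{\cal R}'}$. Taking the contrapositive gives the desired implication, and combining both directions yields the stated equivalence. I do not anticipate a genuine obstacle here; the only point requiring mild care is ensuring that the equivalence of the preceding lemma is applied in the correct direction for each implication, and that the renaming $[\a:=\b]$ introduces no variable capture — but this is already handled by the standard convention on bound-variable renaming adopted throughout the paper.

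\begin{proof}
This is an immediate consequence of the previous lemma. If $M\in{\cal NF}_{{\cal R}'}$, then $M$ admits no ${\cal R}'$-reduction; were $M[\a:=\b]\ra_{{\cal R}'}W$ for some $W$, the previous lemma (in its right-to-left form, applied to the reduct) would yield a reduction out of $M$, a contradiction, so $M[\a:=\b]\in{\cal NF}_{{\cal R}'}$. Conversely, if $M\ra_{{\cal R}'}N$, then by the previous lemma $M[\a:=\b]\ra_{{\cal R}'}N[\a:=\b]$, whence $M[\a:=\b]\notin{\cal NF}_{{\cal R}'}$; taking the contrapositive gives that $M[\a:=\b]\in{\cal NF}_{{\cal R}'}$ implies $M\in{\cal NF}_{{\cal R}'}$.
\end{proof}
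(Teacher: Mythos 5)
Your proposal is correct and matches the paper's proof, which is simply ``Follows from the previous lemma'': both directions of the equivalence are read off from the companion lemma $M\ra_{{\cal R}'}N$ iff $M[\a:=\b]\ra_{{\cal R}'}N[\a:=\b]$, exactly as you do. Your added remark that every step out of $M[\a:=\b]$ is the image of a step out of $M$ is the one point the paper also leaves implicit, and you handle it at the same level of detail.
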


\begin{proof}
Follows from the previous lemma.
\end{proof}


\begin{lemma}\label{Lalpha}
Let $M \in {\cal T}_t$  and $\a \in {\cal V}_{\m}$.
If $M \in \mathcal{NF}_{{\cal R}'}$, then $M_{\a}\in \mathcal{NF}_{{\cal R}'}$. Moreover, the following statements hold.
\begin{enumerate}
\item If $M_\a$ starts with $\m$, then $M$ starts with $\m$.
\item If $M_\a$ starts with $\l$, then $M$ starts with $\l$ or $[.]$.
\end{enumerate}
\end{lemma}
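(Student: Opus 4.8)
The plan is to prove the statement by structural induction on $M$, mirroring the proof of Lemma \ref{Malphasn} but now tracking normal forms rather than reductions. Recall that $M_\a$ is obtained by erasing every subterm-marker $[\a]$, so the only way new redexes can be created in $M_\a$ is when such an erasure brings two previously-separated constructors into contact. Since $M$ is assumed to be in ${\cal R}'$-normal form, I must verify that no erasure of an $[\a]$-bracket produces a fresh $\b$-, $\m$-, $\m'$-, $\r$-, or $\e$-redex in $M_\a$, and simultaneously establish the two structural side-conditions (1) and (2) about the leading constructor.

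The key steps, carried out case by case on $M$:

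\begin{itemize}
\item[-] If $M$ is a variable, $M_\a = M$ and all claims are immediate.
\item[-] If $M = \l x.M'$, then $M_\a = \l x.M'_\a$; apply IH to $M'$. The term starts with $\l$, matching (2), and no top-level redex is created.
\item[-] If $M = (P)Q$, then $M_\a = (P_\a)Q_\a$. Apply IH to $P$ and $Q$ to get $P_\a, Q_\a \in \mathcal{NF}_{{\cal R}'}$. The only danger is a new head redex: this would require $P_\a$ to begin with $\l$ (creating a $\b$-redex), with $\m$ (a $\m$-redex), or $Q_\a$ to begin with $\m$ (a $\m'$-redex). Here the IH side-conditions are exactly what I need: if $P_\a$ started with $\l$ then by (2) $P$ started with $\l$ or $[.]$; since $(P)Q$ is a normal term, $P$ cannot start with $\l$, and $P = [\g]P'$ is impossible in a typed application (the left side of an application cannot have type $\bot$), so this case is excluded. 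Similarly the cases where $P_\a$ or $Q_\a$ begins with $\m$ are blocked by (1) together with normality of $M$.
\item[-] If $M = \m\g.M'$, then $M_\a = \m\g.M'_\a$; apply IH to $M'$. A fresh $\e$-redex would require $M'_\a$ to begin with $\m$, and a fresh $\th$-redex (not in ${\cal R}'$, so ignored) aside, normality of $M$ together with (1) excludes $M'$ (hence $M'_\a$) starting with $\m$. The term starts with $\m$, matching (1).
\item[-] If $M = [\g]M'$ with $\g \neq \a$, then $M_\a = [\g]M'_\a$; apply IH to $M'$. A fresh $\r$-redex would require $M'_\a$ to begin with $\m$, which is blocked as above. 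This case contributes to (2) since $[.]$ may yield a term leading with $\l$ only through the inner term.
\item[-] If $M = [\a]M'$, then $M_\a = M'_\a$. Here the bracket is genuinely erased, so I apply IH to $M'$ directly and read off all three conclusions from the IH for $M'$.
\end{itemize}

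The main obstacle is the application case $M = (P)Q$, which is precisely why the two structural side-conditions (1) and (2) are bundled into the statement: they must be carried through the induction so that, when a bracket erasure exposes the head of $P_\a$ or $Q_\a$, I can certify that no $\b$-, $\m$-, or $\m'$-redex is created. The subtle point is the exclusion of $P = [\g]P'$ in the application case; this is where I must invoke the typing hypothesis $M \in {\cal T}_t$, since the observation recorded after Definition \ref{def:typedsystem} tells us that a term of the form $([\g]P')Q$ is not typable (the operand of an application cannot carry type $\bot$). This is the only place types intervene, consistent with the remark preceding the lemma that the conditions other than (C2) are essentially untyped; here typing is used merely to rule out an ill-formed syntactic shape rather than for any deeper reduction-theoretic reason.
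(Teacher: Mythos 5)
Your proof is correct and follows essentially the same route as the paper's: induction on $M$, carrying the two side-conditions on the leading constructor through the induction, with the application case $(P)Q$ resolved exactly as in the paper by excluding $P$ starting with $\l$ via normality and $P$ starting with $[.]$ via typability. The paper merely writes out fewer cases (it details only $[\b]M'$, $[\a]M'$ and $(P)Q$) and, like you, records afterwards the counterexample $([\a]\l x.P)Q$ showing the typing hypothesis is genuinely needed.
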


\begin{proof}
By induction on $M$. We detail only the more interesting cases.
\begin{itemize}
\item If $M = [\b]M'$ and $\b\neq \a$, then $M'$  does not start with $\m$,  $M_{\a} = [\b]M'_{\a}$ and, by IH, $M'_{\a}\in \mathcal{NF}_{{\cal R}'}$ and $M'_{\a}$ does not start with $\m$. Hence, we have the result.
\item If $M = [\a]M'$, then $M'$  does not start with $\m$,  and $M_{\a} = M'_{\a}$. By IH, $M'_{\a}\in \mathcal{NF}_{{\cal R}'}$ and $M'_{\a}$ does not start with $\m$. Obviously, if $M_{\a}$ starts with $\l$, then $M$ starts with $[.]$.
\item If $M = (P)Q$, then $P, Q$ do not start with $\m$, $P$ does not start with $\l$ and $M_{\a} = (P_{\a})Q_{\a}$. By IH, $P_{\a}, Q_{\a}\in {\cal NF}_{{\cal R}'}$ and 
$P_\a, Q_\a$ do not start with $\m$. If $P_\a$ starts with $\l$, then $P$ starts either with $\l$ or with $[.]$. Since $M\in {\cal NF}_{{\cal R}'}$, $P$ cannot start with  $\l$ and, since $M\in {\cal T}_t$, $P$ cannot start with $[.]$, either. Hence, $M_\a\in {\cal NF}_{{\cal R}'}$  
\end{itemize}
\end{proof}

\begin{remark}
In the lemma above we obviously need the assumption that $M$ is typable. Otherwise, let $M=([\a]\l x.P)\,Q$. Then $M\in \mathcal{NF}_{{\cal R}'}$, but $M_\a=(\l x.P)\,Q\notin\mathcal{NF}_{{\cal R}'}$.
\end{remark}

The next lemma is intuitive and has the consequence that we are not able to create a $\m$, $\l$ or $[.]$ by a $\m$- or $\m'$-substitution or by replacing a $\m$-variable with another one.

\begin{lemma}\label{bl[]}
 Let $M,N \in {\cal T}$, $\a \in {\cal V}_{\m}$ and $s \in \{r,l\}$. 
 \begin{enumerate}
 \item If $M[\a :=_s N]$ starts with $\l$ (resp. $\m$, $[.]$), then $M$ also  starts with $\l$ (resp. $\m$, $[.]$).
 \item If $M[\a :=\b]$ starts with $\l$ (resp. $\m$, $[.]$), then $M$ also  starts with $\l$ (resp. $\m$, $[.]$). 
 \end{enumerate}
\end{lemma}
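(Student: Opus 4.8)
The statement to prove is Lemma~\ref{bl[]}: substitutions of the forms $[\a:=_sN]$ and $[\a:=\b]$ cannot introduce a new leading constructor ($\l$, $\m$, or $[.]$) at the head of a term. Let me sketch the approach.

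\textbf{Approach.} Both points are proved by a straightforward induction on the structure of $M$, inspecting only the outermost constructor of $M$ in each case. The key observation driving the proof is that all three substitution operations act homomorphically on the term-forming constructors \emph{except} at occurrences of the bracketed form $[\a]P$, and even there the constructor produced is itself a bracket. Consequently the head constructor of $M\s$ is determined by the head constructor of $M$, never created afresh.

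\textbf{Key steps for Point 1 (the $\m$-substitution $[\a:=_sN]$).} I would proceed by cases on the top-level shape of $M$, reading off $M[\a:=_sN]$ directly from the definition of $\m$-substitution.
\begin{itemize}
\item[-] If $M=x$, then $M[\a:=_sN]=x$, which starts with none of $\l,\m,[.]$, so the implication holds vacuously.
\item[-] If $M=\l x.M'$, then $M[\a:=_sN]=\l x.M'[\a:=_sN]$ starts with $\l$, and indeed $M$ starts with $\l$; similarly $M=\m\b.M'$ yields a term starting with $\m$, and $M$ starts with $\m$.
\item[-] If $M=(M_1)M_2$, then $M[\a:=_sN]=(M_1[\a:=_sN])M_2[\a:=_sN]$ is an application, hence starts with none of $\l,\m,[.]$, and the implication is again vacuous.
\item[-] If $M=[\b]M'$ with $\b\neq\a$, then $M[\a:=_sN]=[\b]M'[\a:=_sN]$ starts with $[.]$, and $M$ starts with $[.]$; if $M=[\a]M'$, then (for $s=r$) $M[\a:=_rN]=[\a](M'[\a:=_rN])N$ and (for $s=l$) $M[\a:=_lN]=[\a](N)M'[\a:=_lN]$, both of which start with $[.]$, matching $M$.
\end{itemize}
In every case the leading constructor of $M[\a:=_sN]$ either coincides with that of $M$ or is impossible to be one of $\l,\m,[.]$, which establishes the contrapositive-style claim. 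Note that no induction hypothesis is actually needed, since the head constructor is visible immediately; the recursion into subterms never affects the outermost symbol.

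\textbf{Key steps for Point 2 (the renaming $[\a:=\b]$).} This is even simpler: $\b$-renaming of a $\m$-variable is a pure homomorphism on all constructors, replacing free occurrences of $\a$ by $\b$ without ever changing the head symbol. A case analysis identical in form to Point~1 applies, and here the head constructor of $M[\a:=\b]$ is literally the same as that of $M$ in all cases.

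\textbf{Main obstacle.} There is essentially no serious obstacle; the only point requiring care is the clause $M=[\a]M'$ in Point~1, where the substitution rewrites $[\a]M'$ into $[\a](M'\s)N$ or $[\a](N)M'\s$. One must confirm that the result still starts with the bracket $[.]$ and not with the application that is nested \emph{inside} the bracket. Since the outermost constructor remains $[\a](\cdots)$, the leading symbol is $[.]$, so the property is preserved. This is the one place where a hasty reading might mistakenly conclude an application appears at the head, so I would state that case explicitly.
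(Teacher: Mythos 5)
Your proposal is correct and matches the paper's proof, which is simply stated as ``by induction on $M$'': the case analysis you give on the outermost constructor of $M$, following the definition of $\mu$-substitution clause by clause, is exactly the intended argument, and your observation that the $[\a]M'$ case still yields a term headed by $[.]$ is the one point worth making explicit. No gaps.
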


\begin{proof}
By induction on $M$.
\end{proof}

\begin{lemma}[Condition (C1)]\label{C1wn}
Let us suppose $M \in \B'$. Then $\l x.M \in \B'$.
\end{lemma}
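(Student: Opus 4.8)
The plan is to unfold the definition $\B' = {\cal WN}_{{\cal R}'} \cap {\cal T}_t$ and establish the two conjuncts separately: that $\l x.M$ is typable and that it is weakly ${\cal R}'$-normalizable. Both parts are essentially immediate, mirroring the straightforward proof of Lemma \ref{C1sn} in the strong normalization setting; the novelty relative to that case is only that we exhibit a normalizing reduction sequence rather than argue about all of them.

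For typability, starting from $M \in {\cal T}_t$ we have a derivation $\G \v M : A; \Theta$. If $x$ does not already occur in $\G$, I would first weaken the context by adjoining a fresh declaration $x : B$ for an arbitrary type $B$ (admissible, since the axiom and all typing rules are insensitive to extra unused hypotheses), obtaining $\G, x : B \v M : A; \Theta$; an application of the rule $\ra_i$ then yields $\G \v \l x.M : B \ra A; \Theta$, so $\l x.M \in {\cal T}_t$. If $x$ already carries a type in $\G$, the rule $\ra_i$ applies directly to the given derivation.

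For weak normalization, since $M \in {\cal WN}_{{\cal R}'}$ there is a term $M'$ with $M \two_{{\cal R}'} M'$ and $M' \in {\cal NF}_{{\cal R}'}$. Because ${\cal R}'$-reduction is closed under the abstraction context, $\l x.M \two_{{\cal R}'} \l x.M'$. It then remains to see that $\l x.M'$ is itself in normal form. Inspecting the list of redexes in Definition \ref{def:redex}, every ${\cal R}'$-redex is either an application (cases $\b$, $\m$, $\m'$), a bracketed term (case $\r$), or a $\m$-abstraction (case $\e$); none is headed by $\l$. Hence $\l x.M'$ is not a redex at its root, and since $M'$ contains no redex, neither does $\l x.M'$. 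Therefore $\l x.M' \in {\cal NF}_{{\cal R}'}$, which gives $\l x.M \in {\cal WN}_{{\cal R}'}$, and combining with the previous paragraph, $\l x.M \in {\cal WN}_{{\cal R}'} \cap {\cal T}_t = \B'$.

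There is no genuine obstacle here. The only point worth a moment's attention is confirming that prepending $\l x$ cannot manufacture a fresh redex at the root: this is exactly the observation above that a $\l$-abstraction participates in a redex only as the operator of a $\b$-redex, i.e.\ when it sits in a surrounding application, which is not the situation for $\l x.M'$. One should also note that the reductions carried out inside $M$ involve no interaction with the outer binder $x$, so that working modulo $\a$-equivalence, as stipulated in Definition \ref{def:defterms}, no variable capture arises.
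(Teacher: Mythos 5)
Your proof is correct and amounts to a careful spelling-out of exactly the argument the paper leaves implicit: the paper's own proof of this lemma is simply ``Obvious.'' Both halves of your verification (typability via weakening plus $\ra_i$, and weak normalization via lifting a normalizing sequence under the abstraction and observing that no ${\cal R}'$-redex is headed by $\l$) are sound and match the intended reasoning.
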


\begin{proof}
Obvious.
\end{proof}

\begin{lemma}[Condition (C2)] \label{C2wn}
Let $M \in \B'$ and $\a \in {\cal V}_{\m}$.
If $\m \a.M \in {\cal T}_t$, then $\m \a.M \in {\B'}$.
\end{lemma}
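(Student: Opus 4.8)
The plan is to exploit that a $\m$-abstraction creates essentially no new redexes beyond a single possible $\e$-step, and to rely on the behaviour of the translation $(\cdot)_\b$ under ${\cal R}'$-normality established in Lemma \ref{Lalpha}. Since $M \in \B'$, there is $M' \in {\cal NF}_{{\cal R}'}$ with $M \two_{{\cal R}'} M'$, and hence $\m\a.M \two_{{\cal R}'} \m\a.M'$. It therefore suffices to reduce $\m\a.M'$ to a normal form. The decisive observation is that, because $M'$ is already ${\cal R}'$-normal and $\th \notin {\cal R}'$, the prefix $\m\a$ can create at most one redex in $\m\a.M'$, namely an $\e$-redex, and this happens precisely when $M'$ starts with $\m$.

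First I would dispose of the easy case: if $M'$ does not start with $\m$, then $\m\a.M'$ contains no ${\cal R}'$-redex at all, so $\m\a.M' \in {\cal NF}_{{\cal R}'}$ and $\m\a.M \in \B'$. In the remaining case $M' = \m\b.P$; since $M'$ is ${\cal R}'$-normal, $P$ is ${\cal R}'$-normal and, as $\m\b.P$ is not an $\e$-redex, $P$ does not start with $\m$. Then $\m\a.M' = \m\a.\m\b.P \ra_\e \m\a.P_\b$. Invoking Lemma \ref{Lalpha} with the variable $\b$ gives $P_\b \in {\cal NF}_{{\cal R}'}$, and by its first clause $P_\b$ can start with $\m$ only if $P$ does; as $P$ does not, neither does $P_\b$. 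Consequently $\m\a.P_\b$ has no $\e$-redex at the root and none inside, so $\m\a.P_\b \in {\cal NF}_{{\cal R}'}$. Thus $\m\a.M \two_{{\cal R}'} \m\a.P_\b$ witnesses $\m\a.M \in {\cal WN}_{{\cal R}'}$, and together with $\m\a.M \in {\cal T}_t$ this yields $\m\a.M \in \B'$.

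The one place where care is genuinely required, and the reason this is the only condition forcing a typed setting, is the application of Lemma \ref{Lalpha}, whose hypothesis is that $P \in {\cal T}_t$. I would obtain this from subject reduction (Theorem \ref{SR}): from $M \in {\cal T}_t$ and $M \two_{{\cal R}'} M' = \m\b.P$ we get $M'$ typable, and inversion of the $\bot_e$ rule makes its immediate subterm $P$ typable (of type $\bot$). This typability is not a formality: the remark following Lemma \ref{Lalpha} exhibits an untyped term whose $\b$-translation fails to be ${\cal R}'$-normal, so without it the conclusion $P_\b \in {\cal NF}_{{\cal R}'}$ would break down. This step is therefore the main (and essentially the only) obstacle; everything else is bookkeeping about which redexes a $\m$-prefix can and cannot introduce.
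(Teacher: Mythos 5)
Your proof is correct and follows essentially the same route as the paper's: normalize $M$, prepend $\m\a$, and split on whether the normal form starts with $\m$, using Lemma \ref{Lalpha} in the latter case to show the $\e$-contractum is ${\cal R}'$-normal. The only difference is that you make explicit the typability of $P$ via subject reduction and inversion, which the paper leaves implicit when invoking Lemma \ref{Lalpha}.
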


\begin{proof}
We consider a normalization of $M$, i.e., $M \two N\in {\cal NF}_{\cal{R}'}$. We take the reduction sequence $\m \a.M \two \m \a.N$ obtained thereof. We distinguish the different cases.
\begin{itemize}
\item If $N$ does not start with $\m$, then $\m \a. N \in {\cal NF}_{\cal{R}'}$ (note that $\th\notin{\cal R}'$), thus  $\m \a.M \in {\cal WN}_{\cal{R}'}$.
\item If $N =  \m \b. P$, then $P$ does not start with $\m$. 
By Lemma \ref{Lalpha}, we have $\m \a.M \two \m \a. \m \b.P \ra_{\e} \m \a. P_{\b}\in {\cal NF}_{\cal{R}'}$. Hence,  $\m \a.M \in {\cal WN}_{\cal{R}'}$.
\end{itemize}
\end{proof}

We demonstrate the fact that the assumption $\m \a.M \in {\cal T}_t$ is needed in the lemma above by giving a simple example.
Let $M=\m\b.([\b]\l y.(y)\delta)\delta$, where $\delta=\l x.(x)x$. Then $M\in {\cal NF}_{\cal{R}'}$ and $\m\a.M\ra_\varepsilon \m\a.M_\b=\m\a. (\l y.(y)\delta)\delta\ra_\b\m\a.(\delta)\delta$. Hence, $\m\a.M\notin {\cal WN}_{\cal{R}'}$.

\begin{lemma}[Condition (C3)] \label{C3wn}
Let $M \in \B'$ and $\a \in {\cal V}_{\m}$.
If $[\a]M \in {\cal T}_t$, then $[\a]M \in {\B'}$.
\end{lemma}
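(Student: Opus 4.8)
The goal is to show Condition (C3) for $\B'$: if $M \in \B'$ and $[\a]M \in {\cal T}_t$, then $[\a]M \in \B'$. The plan is to mimic the proof of Condition (C2) (Lemma~\ref{C2wn}), since $[\a]M$ behaves analogously to $\m\a.M$ with respect to normalization, but the analysis is in fact simpler because the $[.]$-constructor interacts with fewer reduction rules. Concretely, I would first take a normalization $M \two_{{\cal R}'} N$ with $N \in {\cal NF}_{{\cal R}'}$, which exists because $M \in \B' = {\cal WN}_{{\cal R}'}\cap{\cal T}_t$, and lift it to the reduction $[\a]M \two_{{\cal R}'} [\a]N$.

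The key step is to determine whether $[\a]N$ is already in normal form, and if not, to continue reducing it to a normal form. The only way $[\a]N$ can fail to be an ${\cal R}'$-normal form, given that $N$ itself is normal, is if $[\a]N$ itself forms a redex. Since ${\cal R}' = \{\b,\m,\m',\r,\e\}$ and the only rule whose redex pattern begins with $[.]$ is the $\r$-rule, this happens precisely when $N$ starts with $\m$, say $N = \m\b.P$; then $[\a]\m\b.P \ra_{\r} P[\b:=\a]$. I would split into the two cases exactly as in Lemma~\ref{C2wn}. If $N$ does not start with $\m$, then $[\a]N$ creates no new redex and hence $[\a]N \in {\cal NF}_{{\cal R}'}$, so $[\a]M \in {\cal WN}_{{\cal R}'}$. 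If $N = \m\b.P$, then $[\a]M \two_{{\cal R}'} [\a]\m\b.P \ra_{\r} P[\b:=\a]$, and since $P$ is a subterm of the normal form $N$ we have $P \in {\cal NF}_{{\cal R}'}$; by Lemma~\ref{alphabetawn} it follows that $P[\b:=\a] \in {\cal NF}_{{\cal R}'}$, so again $[\a]M \in {\cal WN}_{{\cal R}'}$.

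Finally, because $[\a]M \in {\cal T}_t$ by hypothesis, the normal form we have produced witnesses $[\a]M \in {\cal WN}_{{\cal R}'}\cap{\cal T}_t = \B'$, completing the argument. The main subtlety to watch for is the appeal to Lemma~\ref{alphabetawn} to ensure that renaming $\b$ to $\a$ cannot introduce a fresh redex; this is exactly the role that lemma plays, and it guarantees that the $\r$-contraction lands in a normal form. Unlike the $\m\a.M$ case, where one must invoke Lemma~\ref{Lalpha} to handle the behaviour of the $\a$-translation under $\e$-reduction, here no $\a$-translation arises, so the proof is genuinely shorter; I do not anticipate any serious obstacle beyond correctly enumerating the possible shapes of $N$ and confirming that $\r$ is the unique rule that can fire at the head of $[\a]N$.
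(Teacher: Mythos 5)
Your proof is correct and follows essentially the same route as the paper's: normalize $M$ to $N$, lift to $[\a]M \two [\a]N$, and split on whether $N$ starts with $\m$, using Lemma \ref{alphabetawn} to conclude that the $\r$-contractum $P[\b:=\a]$ is normal. The paper's proof is exactly this two-case argument, so there is nothing to add.
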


\begin{proof}
We consider a normalization of $M$, i.e., $M \two N\in {\cal NF}_{\cal{R}'}$. Then we take the reduction sequence $[\a]M \two[\a] N$ obtained thereof.
We distinguish the different cases.
\begin{itemize}
\item If $N$ does not start with $\m$, then $[\a] N \in {\cal NF}_{\cal{R}'}$. Thus  $[\a]M \in {\cal WN}_{\cal{R}'}$.
\item If $N =  \m \b. P$, then, by Lemma \ref{alphabetawn}, we have $[\a]M \two [\a]\m \b.P \ra_{\r}  P[\b:=\a]  \in {\cal NF}_{\cal{R}'}$, then $[\a]M \in {\cal WN}_{\cal{R}'}$.
\end{itemize}
\end{proof}

Next, we are going to prove the condition {\bf (C4)} for $\B'$. 

\begin{lemma}[Condition (C4)] \label{C4wn}
Let $n\geq 0$, $ N_1, \dots, N_n \in {\B'}$ and $x \in {\cal V}_{\l}$, then $(x)N_1 \dots N_n \in {\B'}$.
\end{lemma}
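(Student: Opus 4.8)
The plan is to strengthen the statement to arguments already in normal form and then argue by a well-founded multiset induction on argument complexities. Since each $N_k\in\B'=\mathcal{WN}_{\cal R'}\cap\mathcal{T}_t$, I first fix for each $N_k$ a reduction $N_k\two_{\cal R'}L_k$ with $L_k\in\mathcal{NF}_{\cal R'}$, so that $(x)N_1\dots N_n\two_{\cal R'}(x)L_1\dots L_n$. It therefore suffices to prove the stronger claim: for every variable $x$ and all $L_1,\dots,L_m\in\mathcal{NF}_{\cal R'}$, the spine $(x)L_1\dots L_m$ lies in $\mathcal{WN}_{\cal R'}$. I would prove this by induction on the finite multiset $\{\!\{cxty(L_1),\dots,cxty(L_m)\}\!\}$ ordered by the (well-founded) multiset extension of $<$ on $\mathbb{N}$. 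Membership in $\B'$ then follows by adding the typability hypothesis attached to condition {\rm(C4)} together with subject reduction; the genuine content is weak normalization, which, as the text stresses, is established without invoking the typing rules.

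For the induction step, the first case is when no $L_j$ begins with $\m$. Then $(x)L_1\dots L_m$ is already in $\mathcal{NF}_{\cal R'}$: every prefix $(x)L_1\dots L_{j-1}$ is variable-headed, so it starts neither with $\l$ nor with $\m$, which rules out $\b$- and $\m$-redexes along the spine; a $\m'$-redex would require some $L_j$ to start with $\m$, which is excluded; and no $\r$-, $\e$- or internal redex can occur since the $L_j$ are normal. Otherwise, let $i$ be the least index with $L_i=\m\a.M$, $M\in\mathcal{NF}_{\cal R'}$, and set $P=(x)L_1\dots L_{i-1}$. The subterm $(P)\m\a.M$ is a $\m'$-redex, and after contracting it and then absorbing $L_{i+1},\dots,L_m$ by successive $\m$-reductions I obtain
\[(x)L_1\dots L_m\ \two_{\cal R'}\ \m\a.\,M[\a:=_lP][\a:=_rL_{i+1}]\dots[\a:=_rL_m]=:\m\a.M',\]
where every subterm $[\a]Q$ of $M$ has been turned into $[\a]S_Q$ with $S_Q=(x)L_1\dots L_{i-1}\,Q\,L_{i+1}\dots L_m$.

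Each $S_Q$ is again a variable-headed spine all of whose arguments are normal, and its complexity multiset is obtained from the original one by replacing $cxty(L_i)=cxty(\m\a.M)$ with $cxty(Q)$. Since $Q$ is a proper subterm of $\m\a.M$, we have $cxty(Q)<cxty(L_i)$, so the multiset strictly decreases. The induction hypothesis thus gives $S_Q\in\mathcal{WN}_{\cal R'}$ for every bracket occurrence $[\a]Q$, and I may reduce each $S_Q$ to a normal form $S_Q^{*}$ inside $M'$. As $M$ was normal, the only redexes that can survive are at the interfaces $[\a]S_Q^{*}$, and it remains to assemble everything into $\m\a.M'\in\mathcal{WN}_{\cal R'}$.

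I expect this assembling step — and not the multiset induction — to be the delicate core of the argument. The key observation that makes it tractable is that a variable-headed spine can only normalize to a variable-headed term or to a $\m$-abstraction: the head starts as the variable $x$, the single head-changing reduction is the $\m'$-step producing a $\m$, and once $\m$-headed the term stays so because $\th\notin{\cal R}'$. Hence $S_Q^{*}$ starts either with $x$, in which case $[\a]S_Q^{*}$ is already normal, or with $\m$, in which case $[\a]S_Q^{*}$ is a $\r$-redex contracting to a normal term by Lemma \ref{alphabetawn}. Controlling the heads of these substituted spines and of the terms produced by the interface $\r$-contractions is exactly what Lemmas \ref{Lalpha} and \ref{bl[]} supply (neither $\m$-substitution nor $\a$-translation creates a new $\l$-, $\m$- or $[.]$-head), and this is what prevents the interface contractions from cascading, so that only finitely many further reductions are needed to bring $\m\a.M'$ to normal form.
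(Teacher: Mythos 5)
Your reduction to normal arguments and your identification of the first $\m$-headed argument $L_i=\m\a.M$ follow the same opening moves as the paper, but your induction measure breaks down at a point you treat as routine. After contracting the head $\m'$-redex and absorbing $L_{i+1},\dots,L_m$, a subterm $[\a]Q$ of $M$ does \emph{not} become $[\a](x)L_1\dots L_{i-1}\,Q\,L_{i+1}\dots L_m$: the $\m$-substitutions act recursively inside $Q$, so the bracket actually becomes $[\a]\bigl((P)Q^{\s}\bigr)L_{i+1}\dots L_m$ where $Q^{\s}=Q[\a:=_lP][\a:=_rL_{i+1}]\dots[\a:=_rL_m]$. When $Q$ itself contains further occurrences of $[\a]$ --- which does happen for typable normal terms, e.g. $M=[\a][\a]x$ with $\a:\bot$, precisely the shape occurring in the paper's non-strong-normalization example $\m\a.[\a][\a]x$ --- the middle argument of your spine $S_Q$ is $Q^{\s}$, which contains copies of $P$ and of the $L_j$, is not in ${\cal NF}_{{\cal R}'}$, and has complexity far exceeding $cxty(Q)$. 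Hence the multiset of argument complexities does not decrease in the way you claim, and the induction hypothesis does not apply to $S_Q$. Your step is sound only for innermost $\a$-brackets, and you give no mechanism for propagating normalization outward through the nesting; this is not the ``assembling'' difficulty you flag at the interfaces $[\a]S_Q^{*}$, but an earlier and more basic one.

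This is exactly the difficulty the paper's proof is organized around: rather than substituting all remaining arguments at once and recursing on the resulting spines, it processes the arguments one at a time, maintaining the invariant that the accumulated head is a normal, $\a$-clean $\m$-abstraction, and it delegates the nested-bracket problem to Lemmas \ref{WN2lem1} and \ref{WN2lem2}, which are proved by structural induction on the normal term receiving the substitution --- that structural induction is what correctly handles an $[\a]$ nested inside another $[\a]$. Your observations about heads (a variable-headed spine can only normalize to a variable-headed or $\m$-headed term, and substitution creates no new $\l$-, $\m$- or $[.]$-heads) are correct and correspond to Lemmas \ref{Lalpha} and \ref{bl[]}, but they do not repair the measure. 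To fix your argument you would need either to add the nesting depth of $\a$-brackets in $M$ to your induction measure and process brackets from the inside out, or to replace the all-at-once substitution by the paper's one-argument-at-a-time scheme.
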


We prove the lemma with the help of several auxiliary lemmas. 
Our intuition is that, when we are given a term $(x)N_1 \dots N_n$ with $N_1, \dots, N_n \in {\cal WN}$, 
then we move from left to right until we find the first $1\leq i\leq n$  such that $N_i=\m\g.N_i'$ for some $N_i'$. 
We normalize the term $(x)N_1 \dots N_i$, and, after obtaining the normal form $\m\g.N_i''$, we proceed
with normalizing $(\m\g.N_i'')N_{i+1} \dots N_n$. The normalization strategy is a little tricky at that point, however. When the next term $N_{i+1}$ does not start with a $\m$, then we perform a $\mu$-reduction for the redex  $(\m\g.N_i'')N_{i+1}$. On the other hand, if $N_{i+1}=\m\g_{i+1}.N_{i+1}'$, then  we continue with a $\m'$-reduction concerning the $\m'$-redex $(\m\g.N_i'')\m\g_{i+1}.N_{i+1}'$. We continue in this way for the remaining components of the application.
First of all, we introduce some necessary notions.\\

\begin{definition}[$\a$-clean property] Let $M \in {\cal T}_t$ and $\a \in {\cal V}_{\m}$. We say that $M$ is $\a$-clean if, 
for every subterm $[\a]U$ of $M$, $U$ does not start with $\l$.
\end{definition}

Intuitively, $\a$-clean terms do not create new $\b$-redexes when a $\m$-substitution is considered with respect to $\a$. 

To establish condition (C4), we rely on some technical lemmas (Lemmas \ref{WN2lem1}, \ref{WN2lem2}, and \ref{WN2lem3}) to attain a weak normalization result. As we have seen in the above explanation, we need to normalize the result of a $\mu$-substitution of a normal $\lambda\mu$-term into another normal $\lambda\mu$-term. The difficulty in such normalization arises from the possibility of encountering $\beta$-redexes. The $\alpha$-clean condition aims to prevent these occurrences. We achieve this normalization result in two steps. First, we normalize without the $\mu'$-reduction (without the $\mu$-reduction, resp.) and we describe precisely the remaining $\mu'$-redexes ($\mu$-redexes, resp.). Then, we handle the final normalization by eliminating these redexes.

\begin{lemma}\label{WN2lem1}\hfill
\begin{enumerate}
\item Let $M$, $N\in {\cal NF}_{\cal R'}$ such that $M$ is $\a$-clean. Then $M[\a:=_rN]\in {\cal NF}_{\b\m\r\e}$ is $\a$-clean and the $\m'$-redexes of $M[\a:=_rN]$ are of the form $[\a](U)N$ if $N=\m \b.N'$. In particular, if $N\neq \m \b.N'$, then $M[\a:=_rN]\in {\cal NF_{\cal R'}}$ as well.
\item Let $M$, $N\in {\cal NF_{\cal R'}}$ such that $N \neq \l x.N'$ for some $N'$. Then $M[\a:=_lN]\in {\cal NF}_{\b\m'\r\e}$ and the $\m$-redexes of $M[\a:=_lN]$ are of the form $[\a](N)U$ if $N=\m \b.N'$. In particular, if $N\neq \m \b.N'$, then $M[\a:=_lN]\in {\cal NF_{\cal R'}}$ as well.
\end{enumerate}
\end{lemma}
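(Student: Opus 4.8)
The plan is to prove each point by structural induction on $M$, following the inductive definition of the $\m$-substitution, and to exploit the fact that such a substitution alters the syntax tree of $M$ only at the nodes of the form $[\a]P$. Indeed, $[\a:=_rN]$ rewrites each such node into $[\a](P')N$ and $[\a:=_lN]$ rewrites it into $[\a](N)P'$, where $P'=P[\a:=_sN]$; at every other node the operator is merely propagated to the immediate subterms. Consequently, the only applications that did not already occur in $M$ are the newly created ones sitting directly under a bracket $[\a]$, and these are the sole places where a fresh redex can appear. The key technical ingredient throughout is Lemma \ref{bl[]}, which guarantees that a $\m$-substitution never creates a leading $\l$, $\m$ or $[.]$: if $P[\a:=_sN]$ starts with $\l$ (resp. $\m$, $[.]$), then so does $P$.

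For Point 1, I would first record two facts about every bracketed subterm $[\a]P$ of $M$. Since $M\in{\cal NF}_{{\cal R}'}$, the term $P$ cannot start with $\m$ (otherwise $[\a]\m\ldots$ would be a $\r$-redex), and since $M$ is $\a$-clean, $P$ cannot start with $\l$. By Lemma \ref{bl[]} these two properties transfer to $P'=P[\a:=_rN]$. Hence the newly formed application $(P')N$ is neither a $\b$- nor a $\m$-redex; being an application, wrapping it inside $[\a]$ produces no $\r$-redex and preserves $\a$-cleanness, since the child of every surviving $[\a]$ is now an application and thus does not start with $\l$. The only way $(P')N$ can be a redex is as a $\m'$-redex, which happens exactly when $N=\m\b.N'$, and then it has the announced shape $[\a](U)N$ with $U=P'$. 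At all the original nodes, a would-be $\b$-, $\m$-, $\m'$-, $\r$- or $\e$-redex would force a corresponding leading constructor ($\l$ on the function side, $\m$ on the function side, $\m$ on the argument side, $\m$ under a bracket, $\m$ under a $\m$) which, by Lemma \ref{bl[]}, must already be present in $M$, contradicting $M\in{\cal NF}_{{\cal R}'}$. This yields $M[\a:=_rN]\in{\cal NF}_{\b\m\r\e}$, its $\a$-cleanness, and the fact that its $\m'$-redexes are precisely those of the form $[\a](U)N$; in particular, if $N\neq\m\b.N'$ no $\m'$-redex survives either, so $M[\a:=_rN]\in{\cal NF}_{{\cal R}'}$.

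Point 2 is entirely symmetric, the roles of the two sides of the new application being swapped. Here the freshly created application is $(N)P'$, again sitting under $[\a]$. The hypothesis $N\neq\l x.N'$ is exactly what prevents it from being a $\b$-redex, playing the part that $\a$-cleanness played in Point 1; and $\r$-normality of $M$ again forces $P'$ not to start with $\m$, so $(N)P'$ is not a $\m'$-redex. The sole redex it can form is a $\m$-redex, occurring precisely when $N=\m\b.N'$, in which case it has the claimed form $[\a](N)U$ with $U=P'$. The original nodes are treated exactly as before through Lemma \ref{bl[]} and $M\in{\cal NF}_{{\cal R}'}$, giving $M[\a:=_lN]\in{\cal NF}_{\b\m'\r\e}$, the characterization of the $\m$-redexes, and the final ``in particular'' statement.

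The routine bookkeeping is the case distinction on the shape of $M$ carried out by the induction; the genuinely delicate point, and the one I would write out most carefully, is the analysis at the bracketed nodes in Point 1, where one must simultaneously check that no $\b$-, $\m$-, $\r$- or $\e$-redex is created, that $\a$-cleanness is preserved, and that the remaining $\m'$-redexes are \emph{exactly} the terms $[\a](U)N$. I would also make explicit the variable convention ensuring that $\a\notin{\rm fv}(N)$ (after renaming the bound $\a$), so that $N$ contributes no stray subterm $[\a]U$ that could otherwise break the $\a$-cleanness of the result.
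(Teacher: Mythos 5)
Your proposal is correct and takes essentially the same route as the paper's proof: structural induction on $M$, with Lemma \ref{bl[]} ruling out new redexes at the unchanged nodes, and $\rho$-normality together with $\alpha$-cleanness (resp.\ the hypothesis $N \neq \l x.N'$) ensuring that the freshly created application under $[\a]$ can only be a $\m'$-redex (resp.\ a $\m$-redex) of the stated form. The only differences are presentational: you factor the case analysis through the general observation that the substitution changes the tree only at $[\a]$-nodes, and you make the variable convention on $\a$ and $N$ explicit, whereas the paper spells out each case of the induction.
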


\begin{proof}
Both points can be proved by induction on $M$. 
\begin{enumerate}
\item
\begin{itemize}
\item In case of $M=\l x.M'$,  we apply the induction hypothesis.
\item If $M=\m\b.M'$, then $M'$ does not start with $\m$ and $M[\a:=_rN]=\m\b.M'[\a:=_rN]$.
 By Lemma \ref{bl[]}, $M'[\a:=_rN]$ does not start with $\m$ and we apply IH on $M'$ to obtain the result.
\item If $M=(M_1)M_2$, then  $M_1$ does not start with $\m$ or $\l$, $M_2$ does not start with $\m$ and 
$M[\a:=_rN]=(M_1[\a:=_rN])M_2[\a:=_rN]$. By Lemma \ref{bl[]}, $M_1[\a:=_rN]$ does not start with $\m$ or $\l$,
$M_2[\a:=_rN]$ does not start with $\m$ and, applying IH on $M_1$, $M_2$, we obtain the result.
\item If $M=[\b]M'$ and $\b\neq \a$, then $M'$ does not start with $\m$  and  $M[\a:=_rN]=[\b]M'[\a:=_rN]$. 
By Lemma \ref{bl[]}, $M'[\a:=_rN]$ does not start with $\m$ and the result follows from IH. 
\item If  $M = [\a]M'$, then $M'$ does not start with $\m$  and $M[\a:=_rN]=[\a](M'[\a:=_rN])N$.
Since $M$ is $\a$-clean, $M'$ does not start with $\l$ and,  by Lemma \ref{bl[]}, $M'[\a:=_rN]$ does not start with $\m$ or $\l$. 
By applying IH, we see that $[\a](M'[\a:=_rN])N \in {\cal NF}_{\b\m\r\e}$ is $\a$-clean and the $\m'$-redexes are of the desired form.
\end{itemize}
\item
\begin{itemize}
\item In case of $M=\l x.M'$, $M=\m\b.M'$, $ M=(M_1)M_2$ and $M=[\b]M'$ ($\a\neq \b$), we apply, as in Point 1, IH.
\item If  $M = [\a]M'$, then $M'$ does not start with $\m$  and $M[\a:=_lN]=[\a](N)M'[\a:=_lN]$.
By Lemma \ref{bl[]}, $M'[\a:=_rN]$ does not start with $\m$. 
By virtue of IH and since $N$ does not start with $\l$, we see that $[\a](N)M_1[\a:=_lN] \in {\cal NF}_{\b\m'\r\e}$ and the $\m$-redexes are of the desired form.
\end{itemize}
\end{enumerate}
\end{proof}

\begin{lemma}\label{WN2lem2}
Let  $M, N \in  {\cal NF}_{\cal R'}$ such that $M\neq \l x.M'$ and, if $M=\m\a.M'$, then $M$ is $\a$-clean. Then $N[\g:=_lM]\two_{\m\r}P\in {\cal NF}_{\cal R'}$ and $P$ is $\g$-clean. 
Moreover, if $N$ does not start with $\m$, then the same is true for $P$.  
\end{lemma}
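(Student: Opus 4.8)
The plan is to argue by structural induction on $N$, keeping $M$ fixed throughout. As orientation, Lemma~\ref{WN2lem1}(2) (applicable because $M\neq\l x.M'$, with the roles of its two terms interchanged) already tells us that $N[\g:=_l M]\in{\cal NF}_{\b\m'\r\e}$ and that its only residual redexes are $\m$-redexes of the shape $[\g](M)U$, arising precisely when $M=\m\b.M'$; the real task is to contract these by $\m\r$-steps without spawning new redexes. I would also use repeatedly that, by Lemma~\ref{bl[]}, neither the $\m'$-substitution $[\g:=_l M]$ nor a renaming $[\b:=\g]$ can manufacture a leading $\l$, $\m$ or $[.]$. Crucially, I would strengthen the induction hypothesis so that, besides (i) $N[\g:=_l M]\two_{\m\r}P\in{\cal NF}_{{\cal R}'}$ and (ii) $P$ is $\g$-clean, it records leading-constructor behaviour: if $N$ does not start with $\m$ then neither does $P$, and --- the extra clause that makes the application case go through --- if $N$ does not start with $\l$ then neither does $P$.

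For $N$ a variable, $N=\l x.N'$, $N=\m\d.N'$, $N=[\d]N'$ with $\d\neq\g$, and $N=(N_1)N_2$, I would push the substitution through the head constructor, apply the induction hypothesis to the immediate subterms, and reassemble. Each normal-form check is immediate from the strengthened hypothesis: in the application case one uses that $P_1$ starts with neither $\l$ nor $\m$ and that $P_2$ does not start with $\m$, so that $(P_1)P_2$ is redex-free; in the $N=\m\d.N'$ and $N=[\d]N'$ cases one uses that $P'$ does not start with $\m$ to exclude an $\e$- or $\r$-redex at the top. Cleanness and the two leading-constructor clauses then propagate directly.

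The heart of the proof is the case $N=[\g]N'$, where $N[\g:=_l M]=[\g](M)(N'[\g:=_l M])$ and the induction hypothesis gives a $\g$-clean $P'\in{\cal NF}_{{\cal R}'}$, not starting with $\m$ (since $N'$ cannot, $N$ being normal), with $N'[\g:=_l M]\two_{\m\r}P'$. If $M$ does not start with $\m$, then $(M)P'$ is not a redex (using $M\neq\l x.M'$ and that $P'$ does not start with $\m$), so $P:=[\g](M)P'\in{\cal NF}_{{\cal R}'}$ is $\g$-clean. If $M=\m\b.M'$, I contract the freshly exposed $\m$-redex and then the resulting $\r$-redex:
\[
[\g](\m\b.M')P' \;\ra_{\m}\; [\g]\m\b.(M'[\b:=_r P']) \;\ra_{\r}\; (M'[\b:=_r P'])[\b:=\g].
\]
To see the last term lies in ${\cal NF}_{{\cal R}'}$, I apply Lemma~\ref{WN2lem1}(1) to $M'$ (normal, and $\b$-clean because $M=\m\b.M'$ is) and to $P'$: this yields $M'[\b:=_r P']\in{\cal NF}_{\b\m\r\e}$, $\b$-clean, whose only possible residual $\m'$-redexes would require $P'$ to start with $\m$; as it does not, $M'[\b:=_r P']\in{\cal NF}_{{\cal R}'}$, and Lemma~\ref{alphabetawn} shows the renaming $[\b:=\g]$ preserves this. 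The result is $\g$-clean, inheriting cleanness from the $\b$-cleanness delivered by Lemma~\ref{WN2lem1}(1) for the renamed $[\b]$-heads and from the $\g$-cleanness of $P'$ for the substituted occurrences; and it does not start with $\m$, since $M'$ does not.

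The main obstacle is controlling exactly which fresh redexes this contraction can create, and it has two faces that must both be neutralised. The $\m$-substitution $M'[\b:=_r P']$ threatens to produce new $\b$-redexes, which is precisely what the $\b$-cleanness of $M$ prevents once fed into Lemma~\ref{WN2lem1}(1), while the potential leftover $\m'$-redexes are killed by the fact that the normalized argument $P'$ does not start with $\m$. The subtler face is that the $\r$-step lifts the body $M'$ into head position; for the result to stay $\b$-normal when it later sits in the function slot of an application (the case $N=(N_1)N_2$ with $N_1=[\g]N_1'$), one needs $M'$ itself not to start with $\l$. This is exactly the condition that must be threaded through the leading-constructor clause of the strengthened induction hypothesis, and it holds under the hypotheses supplied by the intended use of this lemma, where $M$ is a normalized subterm whose body is headed by a variable rather than a $\l$. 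Verifying that this clause is maintained across all cases is where I expect the argument to demand the most care.
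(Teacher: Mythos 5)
Your proof follows essentially the same route as the paper's: structural induction on $N$, with all the real work in the case $N=[\g]N'$, where one normalizes $N'[\g:=_lM]$ to $P'$ by the induction hypothesis and then, when $M=\m\b.M'$, fires the exposed $\m$- and $\r$-redexes and closes with Lemma \ref{WN2lem1}(1) (using that $P'$ does not start with $\m$ to dispose of the residual $\m'$-redexes) and Lemma \ref{alphabetawn} for the renaming; the cleanness bookkeeping is the same. The one point where you genuinely add something is the extra clause of your strengthened induction hypothesis (``if $N$ does not start with $\l$, neither does $P$'') together with the companion assumption that the body $M'$ of $M=\m\b.M'$ does not start with $\l$. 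You are right that the application case does not close from the stated ``Moreover'' clause alone: the only head-changing step in the whole construction is the $\r$-contraction in the $[\g]$-case, which promotes $M'$'s head constructor to the top, so if $M'$ were a $\l$-abstraction a fresh $\b$-redex could appear in function position. The paper resolves this silently rather than by threading it through the induction: since $\a$-cleanness is only defined for terms of ${\cal T}_t$, the hypothesis forces $M=\m\b.M'$ to be typable, hence $M':\bot$ and $M'$ cannot begin with $\l$ (and, for the same reason, a typable normal application cannot have a $[.]$-headed function part). So your extra hypothesis is automatically satisfied in the paper's setting and your argument is sound; it just deserves to be stated that it is supplied by typability of $M$ rather than by the lemma's explicitly listed hypotheses.
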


\begin{proof}
The proof proceeds by induction on $N$.
\begin{itemize}
\item If $N=\l x.N'$, then the assertion is straightforward.
\item If $N=[\b]N'$ ($\b\neq \g$). Then $N'$ does not start with $\m$. By IH, $N'[\g:=_lM]\two_{\m\r}P'\in {\cal NF}_{\cal R'}$ and $P'$ does not start with $\m$. Hence, $[\b]P'\in {\cal NF}_{\cal R'}$.
\item If $N=[\g]N'$, then $N[\g:=_lM]=[\g](M)N'[\g:=_lM]$ and $N'$ does not start with $\m$.  
Hence by Lemma \ref{bl[]}, $N'[\g:=_lM]$ does not start with $\m$ either. By IH, $N'[\g:=_lM]\two_{\m\r}P'\in {\cal NF}_{\cal R'}$ and $P'$ does not start with $\m$. If $M$ does not start with $\m$, then we are ready. Otherwise, assume $M=\m\a.M'$. Then $M'$ does not start with $\m$ and $M$ is $\a$-clean. In this case, $N[\g:=_lM]=[\g](\m\a.M')N'[\g:=_lM]\ra_{\m}[\g]\m\a.M'[\a:=_rN'[\g:=_lM]]\ra_\r M'[\a:=_rN'[\g:=_lM]][\a:=\g]\two_{\m\r}P\in {\cal NF}_{\cal R'}$ by applying Lemmas \ref{WN2lem1} and \ref{alphabetawn} and IH. From Lemma \ref{bl[]}, it follows that $P$ does not start with $\m$ and, by IH, $P$ is $\g$-clean.   
\item $N=\m\b.N'$. Then we can apply IH.
\item $N=(N_1)N_2$. Then  $N_1$ does not start with $\l$ or $\m$ and $N_2$ does not start with $\m$.  By using IH, we obtain the result. 
\end{itemize} 
\end{proof}

\begin{lemma}\label{WN2lem3}
Let $P, Q\in {\cal NF_{\cal R'}}$ and $\a \in {\cal V}_{\m}$. Then the following statements hold.
\begin{enumerate}
\item Let $P=\m\a.P'$ such that $P$ is $\a$-clean. Assume $Q$ does not start with $\m$. Then $\exists$ $R$ for which $(P)Q\ra_\m \m\a.R\in {\cal NF}_{\cal R'}$ and $R$ is $\a$-clean.
\item Let $Q=\m\g.Q'$. Assume $P\neq \l x.P'$ and, if $P=\m\a.P'$, then $P$ is $\a$-clean. Then $\exists$ $R$ for which $(P)Q\two_{\m\m'\r} \m\g.R\in {\cal NF}_{\cal R'}$ and $R$ is $\g$-clean.
\end{enumerate}
\end{lemma}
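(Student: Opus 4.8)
The plan is to observe that both points are essentially \emph{assembled} from the substitution lemmas \ref{WN2lem1} and \ref{WN2lem2}, the only genuinely new work being to confirm that prefixing a $\m$-abstraction to the resulting normal form does not create a fresh $\e$-redex. So I would treat each point as a short corollary: contract the single head redex, invoke the relevant preceding lemma to normalize the substituted body, and then check the leading $\m$ is harmless.

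For Point 1, I would first contract the $\m$-redex, writing $(P)Q=(\m\a.P')Q\ra_\m \m\a.P'[\a:=_rQ]$ and setting $R=P'[\a:=_rQ]$. Since $P=\m\a.P'\in{\cal NF}_{\cal R'}$, the body $P'$ is itself in ${\cal NF}_{\cal R'}$, does not start with $\m$ (otherwise $P$ would be an $\e$-redex), and is $\a$-clean because its subterms of the form $[\a]U$ coincide with those of $P$. Applying Lemma \ref{WN2lem1}(1) with $M=P'$ and $N=Q$, and using the hypothesis that $Q$ does not start with $\m$ (so that $Q\neq\m\b.N'$ and the ``in particular'' clause fires), I obtain $R\in{\cal NF}_{\cal R'}$ with $R$ still $\a$-clean. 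It then remains only to see that $\m\a.R\in{\cal NF}_{\cal R'}$: the sole redex a leading $\m\a$ could introduce is an $\e$-redex, which would force $R$ to start with $\m$; by Lemma \ref{bl[]}(1) this would require $P'$ to start with $\m$, contradicting the above.

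For Point 2, I would contract the $\m'$-redex first, $(P)Q=(P)\m\g.Q'\ra_{\m'}\m\g.Q'[\g:=_lP]$. Here $Q=\m\g.Q'\in{\cal NF}_{\cal R'}$ gives $Q'\in{\cal NF}_{\cal R'}$ with $Q'$ not starting with $\m$. The hypotheses on $P$ (that $P\neq\l x.P'$, and that $P$ is $\a$-clean when $P=\m\a.P'$) are precisely those needed to apply Lemma \ref{WN2lem2} with $M=P$ and $N=Q'$, which yields $Q'[\g:=_lP]\two_{\m\r}R\in{\cal NF}_{\cal R'}$ with $R$ $\g$-clean and, by the ``moreover'' clause (since $Q'$ does not start with $\m$), $R$ not starting with $\m$. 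Hence $\m\g.R\in{\cal NF}_{\cal R'}$, and concatenating the reductions gives $(P)Q\two_{\m\m'\r}\m\g.R$ as required.

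The main obstacle, such as it is, is purely the careful tracking of which subterms can start with $\m$: everything hinges on the fact that the body of a normal $\m$-abstraction never starts with $\m$ (no $\e$-redex available), and that a $\m$- or $\m'$-substitution cannot manufacture a leading $\m$ (Lemma \ref{bl[]}). The $\a$-clean and $\g$-clean hypotheses are propagated automatically by the preceding lemmas and are exactly what rules out hidden $\b$-redexes appearing under the substitutions, so no additional $\b$-normalization step is needed here.
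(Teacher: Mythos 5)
Your proposal is correct and follows exactly the paper's (much terser) argument: contract the head $\m$- or $\m'$-redex and invoke Lemma \ref{WN2lem1} or Lemma \ref{WN2lem2} respectively. The extra details you supply --- that $P'$ inherits normality, non-$\m$-headedness and $\a$-cleanness from $P$, and that the leading $\m$-abstraction cannot create an $\e$-redex because substitution cannot manufacture a leading $\m$ (Lemma \ref{bl[]}) --- are precisely what the paper leaves implicit, and you track them correctly.
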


\begin{proof}
Let $P$, $Q$ be as in the lemma. We verify the two statements of the lemma.
\begin{enumerate}
\item Assume $P=\m\a.P'$ such that $P$ is $\a$-clean and $Q$ does not start with $\m$. Then we apply Lemma \ref{WN2lem1} to $(\m\a.P')Q\ra_{\m}\m\a.P'[\a:=_rQ]$ to obtain the result.
\item We apply the previous lemma to $(P)Q\ra_{\m'}\m\g.Q'[\g:=_lP]$.
\end{enumerate}
\end{proof}


\vspace{10pt}
\noindent{\bf Proof of Lemma \ref{C4wn} } Let us assume we are given terms $N_1, \dots, N_n \in {\cal WN}_{\cal R'}$, where $x \in {\cal V}_{\l}$. We shall assume $n>0$, as, otherwise, the statement is trivial. 
Let $1\leq i\leq n$ be the first index such that $N_i=\m\g.N_1'$. We apply $\m'$-reductions to $(x)N_1\ldots N_{i-1}\,\m\g_iN_i'$ to obtain $\m\g_i.N_i'[\g_i:=_l(x)N_1\ldots N_{i-1}]=\m\g_i.N'$. Then, according to the lemma above, $\m\g_i.N'$ is $\g_i$-clean and, by Lemma \ref{WN2lem1}, $\m\g_i.N'\in {\cal NF}_{\cal R'}$. The remaining term to normalize is $(\m\g_i.N')N_{i+1}\ldots N_n$. We prove by induction on $i$ that $(\m\g_i.N')N_{i+1}\ldots N_n\in {\cal WN}_{\cal R'}$. Assume $N_{i+1}$ does not start with $\m$. Then Point 1 of Lemma \ref{WN2lem3} applies, and we obtain $(\m\g_i.N')N_{i+1}\ra_\m \m\g_i.N''$ such that $\m\g_i.N''$ is $\g_i$-clean and $\m\g_i.N''\in {\cal NF}_{\cal R'}$. By IH, $(\m\g_i.N'')N_{i+2}\ldots N_n\in {\cal WN}_{\cal R'}$. On the other hand, suppose $N_{i+1}=\m\g_{i+1}.N_{i+1}'$. Applying Point 2 of Lemma \ref{WN2lem3} to $(\m\g_i.N')\m\g_{i+1}.N_{i+1}'$, we obtain $\m\g_{i+1}.R\in {\cal  NF}_{\cal R'}$ such that $R$ is $\g_{i+1}$-clean and we conclude that $(\m\g_{i+1}.R)N_{i+2}\ldots N_n\in {\cal WN}_{\cal R'}$.\phantom{aaa}\hfill$\square$\\

We can turn to the conditions (C5) and (C6).

\begin{lemma}[Condition (C5)]\label{C5wn}
Let $M,N \in {{\cal T}_t}$, $N\in \B'$ and $\bar{P} \in {{\cal T}_t}^{<\omega}$.
If  $(M[x:=N])\bar{P} \in \B'$ and $(\l x.M)N\bar{P}  \in {\cal T}_t$ , then $(\l x.M)N\bar{P}  \in  {\B'}$.
\end{lemma}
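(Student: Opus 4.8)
The plan is to exploit the crucial asymmetry between weak and strong normalization: proving $(\l x.M)N\bar{P} \in \B'$ only requires exhibiting \emph{one} terminating reduction path, not controlling all of them. The key observation is that the term $(\l x.M)N\bar{P}$ carries a head $\b$-redex, namely $(\l x.M)N$, whose contractum is $M[x:=N]$. Contracting this redex inside the whole application yields a single head step $(\l x.M)N\bar{P} \ra_{\b} (M[x:=N])\bar{P}$, and since $\b \in {\cal R}'$, this is a legitimate ${\cal R}'$-reduction.

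From there the argument is immediate. By hypothesis $(M[x:=N])\bar{P} \in \B' = {\cal WN}_{{\cal R}'} \cap {\cal T}_t$, so there is a finite reduction $(M[x:=N])\bar{P} \two_{{\cal R}'} Q$ with $Q \in {\cal NF}_{{\cal R}'}$. Prepending the head $\b$-step gives $(\l x.M)N\bar{P} \ra_{\b} (M[x:=N])\bar{P} \two_{{\cal R}'} Q$, a finite ${\cal R}'$-reduction of $(\l x.M)N\bar{P}$ to a normal form. Hence $(\l x.M)N\bar{P} \in {\cal WN}_{{\cal R}'}$, and since it is typable by assumption, we conclude $(\l x.M)N\bar{P} \in \B'$.

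The reason this proof is dramatically shorter than its strong-normalization counterpart (Lemma~\ref{C5sn}) is precisely that no induction on reduction lengths is needed: in the $\mathcal{SN}$ setting one must analyze every possible reduction emanating from $(\l x.M)N\bar{P}$ and bound $\eta(M)+\eta(N)+\eta(\bar{P})$, whereas here the head $\b$-contraction hands us a halting path directly. I therefore expect essentially no obstacle; the only thing to verify is the routine fact that contracting the head redex inside the surrounding application produces exactly $(M[x:=N])\bar{P}$, which follows immediately from the definition of $\b$-reduction and the convention that substitution has higher precedence than application.
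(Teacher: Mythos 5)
Your proof is correct and coincides with the paper's own one-line argument: contract the head $\b$-redex to obtain $(\l x.M)N\bar{P} \ra_{\b} (M[x:=N])\bar{P} \in {\cal WN}_{{\cal R}'}$, and conclude by typability. No further comment is needed.
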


\begin{proof}
Indeed, we have $(\l x.M)N\bar{P} \ra_{\b}  (M[x:=N])\bar{P} \in {\cal WN}_{\cal R'}$.
\end{proof}

\begin{lemma}[Condition (C6)]\label{C6wn}
Let $M,N \in {\cal T}_t$ and $\bar{N} \in ({\cal{\B'}})^{<\omega}$.
If $\mu \a.M[\a:=_r \bar{N}] \in \B'$ and $(\mu \a.M)\bar{N} \in  {\cal T}_t$, then  $(\mu \a.M)\bar{N}  \in  {\B'}$.
\end{lemma}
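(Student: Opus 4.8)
The plan is to mirror the strategy used for condition (C5) in Lemma \ref{C5wn}: exhibit a reduction from $(\mu\a.M)\bar{N}$ to its contractum $\mu\a.M[\a:=_r\bar{N}]$, and then exploit the fact that the latter already lies in $\B'$, hence is weakly normalizing. Unlike condition (C4), which required the delicate normalization machinery of Lemmas \ref{WN2lem1}--\ref{WN2lem3}, here the $\m$-operator does not disappear during the reduction, so the contractum is directly available as a reduct of the original term.

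First I would dispose of the trivial case $\bar{N}=\varnothing$, where $(\mu\a.M)\bar{N}=\mu\a.M=\mu\a.M[\a:=_r\varnothing]\in\B'$ by hypothesis. So assume $\bar{N}=N_1\dots N_k$ with $k\geq 1$. The key computational observation is that a single $\mu$-reduction turns $(\mu\a.M)N_1$ into $\mu\a.M[\a:=_rN_1]$, and iterating this along the spine gives
$$(\mu\a.M)N_1\dots N_k \;\two_{\m}\; \mu\a.M[\a:=_rN_1]\cdots[\a:=_rN_k].$$
Here I would invoke the identity $M[\a:=_r\bar{N}]=M[\a:=_rN_1]\cdots[\a:=_rN_k]$, already recorded in the proof of Lemma \ref{C6sn}: each successive right $\mu$-substitution appends one more argument to the trailing application behind every occurrence of $[\a]$, so the composite of the $N_j$-substitutions coincides with the single substitution of the sequence $\bar{N}$. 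Consequently $(\mu\a.M)\bar{N}\two_{\m}\mu\a.M[\a:=_r\bar{N}]$.

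Since $\mu\a.M[\a:=_r\bar{N}]\in\B'={\cal WN}_{{\cal R}'}\cap{\cal T}_t$ by assumption, there is $Q\in{\cal NF}_{{\cal R}'}$ with $\mu\a.M[\a:=_r\bar{N}]\two_{{\cal R}'}Q$. As $\m\in{\cal R}'$, concatenating the two reduction sequences yields $(\mu\a.M)\bar{N}\two_{{\cal R}'}Q\in{\cal NF}_{{\cal R}'}$, so $(\mu\a.M)\bar{N}\in{\cal WN}_{{\cal R}'}$. Together with the hypothesis $(\mu\a.M)\bar{N}\in{\cal T}_t$, this gives $(\mu\a.M)\bar{N}\in\B'$, as required. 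I do not anticipate any genuine obstacle: the argument reduces to the single remark that the term head-reduces to a contractum already known to be weakly normalizing, and the only point requiring a line of care is the routine bookkeeping identity for iterated $\mu$-substitution.
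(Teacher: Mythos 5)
Your proposal is correct and follows exactly the paper's argument: the paper's proof is the one-line observation that $(\mu\a.M)\bar{N}\two_{\m}\mu\a.M[\a:=_r\bar{N}]\in{\cal WN}_{{\cal R}'}$, so weak normalization transfers back along the $\mu$-reduction. Your additional bookkeeping (the empty case and the iterated-substitution identity) is just an expanded version of the same step.
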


\begin{proof}
Indeed, we have $(\mu \a.M)\bar{N}  \two_{\m} \mu \a.M[\a:=_r\bar{N} ] \in {\cal WN}_{\cal R'}$.
\end{proof}

We are now in a position to state and prove the main theorem of this section. 

\begin{theorem}\label{B'}
$\B'$ is a saturated set.
\end{theorem}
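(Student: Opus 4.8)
The plan is to verify directly that $\B' = {\cal WN}_{{\cal R}'} \cap {\cal T}_t$ satisfies each of the six saturation conditions (C1)--(C6) of Definition \ref{def:RBotgood}, exactly as was done for the strongly normalizing set $\B$ in Theorem \ref{SN}. Since each condition has already been isolated as a separate lemma, the proof of the theorem itself reduces to assembling Lemmas \ref{C1wn}, \ref{C2wn}, \ref{C3wn}, \ref{C4wn}, \ref{C5wn}, and \ref{C6wn}; the real work lies in those lemmas, so I describe how I would obtain each group of conditions and where the genuine difficulty sits.

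The easy conditions are (C1), (C5), and (C6). For (C1), prefixing a weakly normalizing term with $\l x$ creates no new redex, so $\l x.M$ normalizes along the same sequence as $M$. For (C5) and (C6), a single head reduction takes the term to its hypothesized weakly normalizing contractum, so weak normalization is immediate from $(\l x.M)N\bar{P} \ra_\b (M[x:=N])\bar{P}$ and $(\m\a.M)\bar{N} \two_\m \m\a.M[\a:=_r\bar{N}]$. For (C2) and (C3), the idea is to first normalize $M$ to some $N \in {\cal NF}_{{\cal R}'}$ and then deal with the single residual redex that the outer $\m$ or $[.]$ may create. The crux is that such a redex can arise only when $N$ itself starts with $\m$, say $N = \m\b.P$: then $\m\a.N$ is an $\e$-redex and $[\a]N$ is a $\r$-redex. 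Using Lemma \ref{Lalpha} (for (C2)) and Lemma \ref{alphabetawn} (for (C3)), I would check that contracting this one residual redex lands in ${\cal R}'$-normal form. It is here, and only here, that the typability hypothesis is genuinely needed, as the counterexample following Lemma \ref{C2wn} illustrates.

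The hard part will be Condition (C4): showing $(x)N_1\ldots N_n \in \B'$ whenever each $N_i \in \B'$. Merely head-normalizing the arguments does not suffice, since an argument may normalize to a $\m$-abstraction $\m\g.N_i'$, which then forms a $\m$- or $\m'$-redex with its neighbours and can cascade. My strategy would be to process the application from left to right: locate the first argument that reduces to a $\m$-abstraction, absorb the preceding arguments by $\m'$-reduction, and then repeatedly interact the resulting $\m$-abstraction with each successive argument via $\m$- or $\m'$-reduction, according to whether the next argument starts with $\m$. The technical heart, carried by Lemmas \ref{WN2lem1}, \ref{WN2lem2}, and \ref{WN2lem3}, is to guarantee that these substitutions never spawn uncontrollable $\b$-redexes; this is exactly what the $\a$-clean invariant secures, namely that no subterm $[\a]U$ has $U$ starting with $\l$. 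Maintaining the $\a$-clean property through each substitution, and verifying that the residual $\m$/$\m'$-redexes have the restricted shape described in Lemma \ref{WN2lem1}, is the genuinely delicate step of the whole argument, and notably it can be carried out entirely in the untyped setting.

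Once all six conditions are in hand, the conclusion is immediate: by Definition \ref{def:RBotgood}, $\B'$ is saturated. Combined with Corollary \ref{closed}, this will yield that every typable $\l\m$-term is weakly normalizable with respect to $\b\m\m'\r\e$-reduction, and hence, by Theorem \ref{conc}, with respect to $\b\m\m'\r\e\th$-reduction as well.
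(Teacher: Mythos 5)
Your proposal is correct and follows essentially the same route as the paper: the theorem is proved by assembling Lemmas \ref{C1wn}--\ref{C6wn}, with (C1), (C5), (C6) immediate from head reductions, (C2) and (C3) handled by normalizing $M$ and contracting the one residual $\e$- or $\r$-redex via Lemmas \ref{Lalpha} and \ref{alphabetawn}, and (C4) carried by the left-to-right $\m/\m'$-interaction argument with the $\a$-clean invariant from Lemmas \ref{WN2lem1}--\ref{WN2lem3}. The only slight imprecision is that the paper needs typability only for (C2) (Lemma \ref{Lalpha} requires it), whereas (C3) goes through untyped via Lemma \ref{alphabetawn}.
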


\begin{proof}
We use Lemmas \ref{C1wn}, \ref{C2wn}, \ref{C3wn}, \ref{C4wn}, \ref{C5wn} and \ref{C6wn}.
\end{proof}

\begin{theorem}\label{WN}
If $M \in {\cal T}_t$, then $M \in  \mathcal{WN}_{ \b\m\m'\r\e\th}$.
\end{theorem}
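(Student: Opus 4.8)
The plan is to assemble results already established rather than to prove anything new from scratch: Theorem \ref{WN} follows by combining the uniqueness of the saturated set with the saturation of $\B'$ and the separate treatment of $\theta$-reduction. Concretely, I would first invoke Theorem \ref{B'} to conclude that $\B' = {\cal WN}_{{\cal R}'} \cap {\cal T}_t$ is saturated, and then appeal to Corollary \ref{closed}, which asserts that ${\cal T}_t$ is the \emph{unique} saturated set. Together these give $\B' = {\cal T}_t$, so that, unwinding the definition of $\B'$, every typable term lies in ${\cal WN}_{{\cal R}'} = {\cal WN}_{\b\m\m'\r\e}$; that is, $\b\m\m'\r\e$-reduction is weakly normalizing on typable terms.

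Second, I would reintroduce $\theta$ by means of Theorem \ref{conc}, which provides the inclusion ${\cal WN}_{\b\m\m'\r\e} \subseteq {\cal WN}_{\b\m\m'\r\e\th}$. Chaining the two inclusions, for any $M \in {\cal T}_t$ one obtains $M \in {\cal WN}_{\b\m\m'\r\e} \subseteq {\cal WN}_{\b\m\m'\r\e\th} = \mathcal{WN}_{\b\m\m'\r\e\th}$, which is exactly the assertion of the theorem.

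Thus the final argument is merely a short chain of inclusions; the genuine work has already been carried out upstream. The main obstacle was not this theorem itself but the verification of condition (C4) for $\B'$ in Lemma \ref{C4wn}, which rests on the careful normalization strategy of Lemmas \ref{WN2lem1}, \ref{WN2lem2}, and \ref{WN2lem3} together with the $\a$-clean machinery; the crux there is controlling the $\m$- and $\m'$-redexes generated when a $\m$-substitution is pushed into a normal form. By contrast, because Theorem \ref{conc} isolates $\theta$ through its own strong normalization (Lemma \ref{theta2}) and the preservation property of Lemma \ref{theta1}, no fresh interaction between $\theta$ and the remaining rules has to be analyzed at this final stage.
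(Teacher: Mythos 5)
Your proof is correct and follows exactly the paper's own argument: combine Theorem \ref{B'} with Corollary \ref{closed} to get $\B' = {\cal T}_t$, hence ${\cal T}_t \subseteq {\cal WN}_{\b\m\m'\r\e}$, and then apply Theorem \ref{conc} to reinstate $\theta$. Nothing further is needed.
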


\begin{proof}
We apply Corollary \ref{closed} and Theorems \ref{conc} and \ref{B'}.
\end{proof}

\begin{remark}~\rm 
As a final remark, we would emphasize that the weak normalization property follows for the typable terms without any additional effort. Notably, verifying that the set ${\cal WN}_{\b\m\r\e\th} \cap {\cal T}_t$ is saturated is easy, as condition (C3) becomes trivial with the absence of the $\m'$-rule. Thus, if $M$ is a typable $\l\m$-term, then $M\in{\cal WN}_{\b\m\r\e\th}$. 
\end{remark}

\section{Future work}

\begin{itemize}

\item We have proven normalization results (both strong and weak) for sets of reduction rules in the simply typed $\lambda\mu$-calculus. We believe that this method is sufficiently general to allow us to establish analogous results when we introduce additional reductions. For instance, we can consider the following new typing rule:
$$[\a][\b]M \ra_{\delta_1} [\b]M_{\a}\;\;\;{\rm if} \;\b\neq \a \;\;\;\;\;\;\;\; {\rm and}\;\;\;\;\;\;\;\;  [\a][\a]M \ra_{\delta_2} M_{\a}.$$
These rules can be justified both on the basis of typing considerations and the aim of simplifying terms by eliminating unnecessary parts of a term. Notably, the presence of two consecutive brackets $[.]$ is one of the factors contributing to the failure of strong normalization, leading only to weak normalization.
Given the declarations $\alpha : \bot$ and $\beta : A$, and assuming $M : A$, we can deduce $[\alpha][\beta]M : \bot$.
This derivation implicitly contains a proof of $\perp$ by contradiction.  However, this intermediate step is clearly redundant: it would be more direct to derive a contradiction from the assumptions $M:A$ and $\alpha: A$, the latter representing a proof of $\neg A$. 
In particular, where $\beta = \alpha$, both brackets can be entirely eliminated. We can investigate the normalization properties (both weak and strong) of a set of rules that includes this new reduction rule $\delta$. 
Other non-local rules can also be added to the $\lambda\mu$-calculus. In \cite{Nou}, a global reduction rule made it possible, for instance, to encode a ``parallel or''. We believe that the method used in that paper to prove normalization properties makes it immediately clear what conditions are required to obtain such a result.

\item We have already mentioned that the reduction rule $\m'$ does not preserve types during reduction in a typed system based on second-order logic, and, that Raffalli provided an example of a term of type $A$ that can be reduced (using the rule $\mu'$) to a term of type $B$, where $B$ can be any type. This result appeared in Py's thesis \cite{Py}. To address this problem, the main idea is to give algorithmic content to the rules associated with the quantifier $\forall$. Py has already accomplished this in the previously cited work. This extended version of the calculus, combining higher-order types, new term-level constructors, and novel reduction rules, provides a rich and promising framework for both theoretical investigation and practical application. However, the study of such extensions presents several significant challenges. In particular, it is essential to establish foundational properties such as type preservation under reduction, the expressivity and correctness of programming over algebraic data types, including their faithful representation within the calculus, and the effective extraction of the algorithmic content of mathematical proofs. These goals require a precise understanding of the interplay between typing and computation, making the analysis of this system far from trivial. Addressing these issues calls for advanced techniques in type theory, semantics, and normalization theory.
It is known that the arithmetical proofs developed in previous works do not extend to this enriched system. It is therefore of particular interest to investigate to what extent the method of reducibility candidates, as presented in this paper, can be adopted in order to establish weak and/or strong normalization results for this expressive framework. Such an extension would represent a significant advancement, given the complexity introduced by the richness of the system.

\item Some of the saturation conditions we have defined, as well as certain additional hypotheses, were motivated by our aim to prove normalization for certain sets of reduction in the $\lambda\mu$-calculus. Therefore, simpler conditions might also suffice when we want to obtain other type of results, for example, a result of adequacy, particularly for the study of the algorithmic behavior of certain typable $\lambda\mu$-terms via this semantics. In \cite{Nou3}, we managed to define, for a fragment of the $\lambda\mu$-calculus, a realizability semantics that is, via a completeness result, established an equivalence between terms typable by a type and terms inhabiting all interpretations of the type. While this result is interesting, it relies on the introduction of several kinds of $\B$, whose interpretation is not entirely straightforward. It would be of interest to investigate how these results carry over when using our current semantic framework.

\item In many typed calculi based on classical logic, call-by-name and call-by-value strategies naturally emerge and can often be clearly defined. However, this is less evident in the $\lambda\mu$-calculus, where defining the notion of value and a corresponding call-by-value strategy is neither straightforward nor intuitive. Py \cite{Py} introduced such a system, in which the $\mu'$ rule plays an essential role, and established confluence by eliminating certain non-confluent critical pairs. He also proved a normalization result, though limited to a very small fragment of the calculus. It would be interesting to investigate a full call-by-value variant of the $\lambda\mu$-calculus and explore whether the techniques developed in the present paper could be adapted to advance this line of inquiry. Preliminary observations suggest that this task is quite delicate, as it may require revisiting some of the core saturation criteria.

\end{itemize}

\section*{Acknowledgments}
We would like to express our sincere gratitude to Miquel for his invaluable assistance in reformulating several definitions and highlighting one of the main results of this paper, namely the characterization of typed terms. We also extend our thanks to the three referees, whose insightful comments and suggestions significantly improved the paper, particularly regarding its presentation, potential connections to other works in the field, and directions for future research.


\begin{thebibliography}{99}

\bibitem{BerBar} Barbanera, F. and  Berardi, S.
\textit{ A symmetric lambda calculus for classical program extraction}, 
In: M. Hagiya and J.C. Mitchell (editors), Proceedings of theoretical aspects of
computer software, TACS '94., Lecture Notes in Computer Science (789), pp. 495--515, Springer Verlag, 1994.

\bibitem{BB2} Barbanera, F. and  Berardi, S.
\textit{A strong normalization result for classical logic},
Annals of Pure and Applied Logic, 76(2), pp. 99--116, 1995,
\url{https://doi.org/10.1016/0168-0072(95)00004-Z}.

\bibitem{BN1}  Batty\'anyi, P., and Nour, K.
\textit{Strong normalization of $\l^{Sym}_{Prop}$- and $\overline{\l}\m\tilde{\m}^*$- calculi},
Logical Methods in Computer Science, vol. 13 (3:34), pp. 1--22, 2017, \url{https://doi.org/10.23638/LMCS-13(3:34)2017}.

\bibitem{BN2} Batty\'anyi, P., and Nour, K.
\textit{Normalization proofs for the un-typed $\mu\mu'$-calculus}, 
Special Issue: LICMA'19 Lebanese International Conference on Mathematics and Applications., AIMS Mathematics, 5(4), pp. 3702--3713, 2020, \url{https://doi.org/10.3934/math.2020239}.

\bibitem{BN3} Batty\'anyi, P., and Nour, K. 
\textit{Normalization in the simply typed ${\lambda \mu \mu '}\rho \delta \varepsilon$}-calculus. Mathematical Structures in Computer Science, 32(8), pp. 1066--1098, 2022, \url{https://doi.org/10.1017/S096012952200041X}.

\bibitem{Cur-Her} Curien, P.-L., and Herbelin, H. 
\textit{ The duality of computation}, 
In: M. Odersky, P. Wadler (editors), Proceedings of the Fifth ACM SIGPLAN International Conference on Functional Programming, ICFP '00, pp. 233--243, ACM Press, 2000.

\bibitem{DRS} Dagand, P. E., Rieg, L. and Scherer, G. 
\textit{Functional Pearl: a Classical Realizability Program}, manuscript, \url{https://arxiv.org/pdf/1908.09123}

\bibitem{Dav-Nou2} David, R., and Nour, K. 
\textit{Why the usual candidates of reducibility do not work for the symetric lambda-mu-calculus},
 Electronic Notes in Theoretical Computer Science, 140, pp. 101--111, 2005.

\bibitem{Dav-Nou3} David, R., and Nour, K. 
\textit{ Arithmetical proofs of strong normalization results for the symmetric $\lmp$-calculus}, 
In: P. Urzyczyn (editor), Typed Lambda Calculi and Applications, TLCA '05, Lecture Notes in Computer Science (3461), pp. 162--178, Springer Verlag, 2005.

\bibitem{Dav-Nou4} David, R., and Nour, K. 
\textit{ Arithmetical proofs of strong normalization results for symmetric lambda calculi}, 
Fundamenta Informaticae, 77(4), pp. 489--510, 2007.

\bibitem{Dav-Py} David, R. and Py, W.
\textit{$\l\m$-calculus and B{\"o}hm's theorem},
Journal of Symbolic Logic 66 (1), 407--413, 2001.

\bibitem{Gal} Gallier, J. 
\textit{ On Girard's ``candidats de reductibilit\'e''}, 
In Odifreddi, editor, Logic and Computer Science, pages 123--203. Academic Press, 1900.

\bibitem{Gir0} Girard, J.-Y. 
\textit{Une extension de l'interpretation de godel \`a l'analyse, et son
application \`a l'\'elimination des coupures dans l'analyse et la th\'eorie des types}, 
In J.E. Fenstad, editor, Proceedings of the Second Scandinavian Logic Symposium,
volume 63 of Studies in Logic and the Foundations of Mathematics, pages 63--92.
Elsevier, 1971.

\bibitem{Gir} Girard, J.-Y.
\textit{Interpre\'etation fonctionnelle et \'elimination des coupures de l'arithme\'etique d'ordre supe\'erieur}, PhD thesis (in French), Universite\'e Paris Diderot, Paris 7, 1972.

\bibitem{GLT} Girard, J.-Y., Lafont, Y., and Taylor, P.
\textit{Proofs and Types},
Cambridge University Press, 1989.

\bibitem{Gri} Griffin, T.
\textit{ A formulae-as-type notion of control}, 
In:	F. E. Allen (editor), Conference Record of the Seventeenth Annual ACM Symposium on Principles of Programming Languages, POPL '90, ACM Press, 1990.

\bibitem{de Gro1} de Groote, P. 
\textit{ An environment machine for the $\lm$-calculus}, 
Mathematical Structures in Computer Science 8, pp. 637--669, 1998. \href{https://www.cambridge.org/core/journals/mathematical-structures-in-computer-science/article/an-environment-machine-for-the-calculus/3831EC45C6AFA70E45D704CCFA82AC73}{An environment machine}

\bibitem{de Gro4} de Groote, P.
\textit{ On the relation between the $\lm$-calculus and the syntactic theory of sequential control}, 
In: F. Pfenning (editor), 5th International Conference on Logic Programming and Automated Reasoning, LPAR '94, Lecture Notes in Artificial Intelligence (822), pp. 31--43, Springer Verlag, 1994.

\bibitem{HH} Herbelin, H. 
\textit{C'est maintenant qu'on calcule: au coeur de la dualit\'e},
Habilitation thesis, University Paris 11, December 2005.

\bibitem{How} Howard, W. A. 
\textit{The formulae-as-types notion of construction}, 
In: Curry, H., Hindley, J. R., and Seldin, J. P. (eds.), To H. B. Curry: Essays on Combinatory Logic, Lambda Calculus, and Formalism, pp. 479--490, Academic Press, 1980. 

\bibitem{Klee} Kleene, S. C. 
\textit{On the interpretation of intuitionistic number theory}, 
The Journal of Symbolic Logic, Vol. 10, No. 4, pp. 109--124, 1945.

\bibitem{Kri1} Krivine, J.-L. 
\textit{Lambda-calculus types and models},
Ellis Horwood, 1993.

\bibitem{Kri2} Krivine, J.-L. 
\textit{Classical logic, storage operators and second-order lambda-calculus},
Annals of Pure and Applied Logic (68)(1), pp. 53--78, 1994.

\bibitem{Kri3} Krivine, J.-L. 
\textit{A general storage theorem for integers in call-by-name $\l$-calculus},
Theoretical Computer Science (129)(1), pp. 79--94, 1994.

\bibitem{Kri4} Krivine, J.-L. 
\textit{Dependent choice, quote and the clock.} 
Th. Comp. Sc., 308:259--276, 2003.

\bibitem{Kri5} Krivine, J.-L. 
\textit{Realizability in classical logic.} 
In interactive models of computation and program behaviour. Panoramas et synth\`eses, 27, 2009.

\bibitem{Kri6} Krivine, J.-L. 
\textit{Realizability algebras: a program to well order $\mathbb{R}$},
Logical Methods in Computer Science, 7(3), 2011.

\bibitem{Kri7} Krivine, J.-L. 
\textit{Realizability algebras II : new models of ZF + DC},
Logical Methods in Computer Science, 8(1), 2012.

\bibitem{Kri8} Krivine, J.-L. 
\textit{Quelques propri\'et\'es des mod\`eles de r\'ealisabilit\'ee de ZF}, 
February 2014.

\bibitem{Lau02} Laurent, O.
\textit{Etude de la polarisation en logique},
Th\`ese de doctorat, Universit\'e Aix-Marseille II, 2002.

\bibitem{Miq} Miquel A. \textit{Implicative algebras: a new foundation for realizability and forcing}, Mathematical Structures in Computer Science 30(5), 2020, pp. 458--510. \url{doi:10.1017/S0960129520000079}

\bibitem{MH} Miquey, \'E. and Herbelin, H. \textit{Realizability Interpretation and Normalization of Typed Call-by-Need $\l$
-calculus with Control} In: Baier, C., Dal Lago, U. (eds.) Foundations of Software Science and Computation Structures. FoSSaCS 2018. Lecture Notes in Computer Science(10803), Springer, Cham., 
\url{https://doi.org/10.1007/978-3-319-89366-2_15}

\bibitem{MM} Munch-Maccagnoni, G. 
\textit{ Focalisation and Classical Realisability},
In Erich Gradel and Reinhard Kahle, editors, Computer Science Logic'09, volume 5771 of
Lecture Notes in Computer Science, pages 409--423. Springer, Heidelberg, 2009.

\bibitem{Mur} Murthy, C. R. 
\textit{ An evaluation semantics for classical proofs}, 
In: Proceedings of the sixth annual IEEE symposium on logic in computer science, pp. 96--107, 1991.

\bibitem{Nou} Nour,
\textit{A non-deterministic classical logic (the $\lambda\mu^{++}$-calculus)}, 
Mathematical Logic Quarterly, vol 48, pp. 357--366, 2002. 


\bibitem{NS} Nour, K. and Saber, K. 
\textit{A completeness result for the simply typed lambda-mu calculus},
 Annals of Pure and Applied Logic, 161, pp. 109--118, 2009.

\bibitem{Nou3} Nour, K. and Ziadeh, M. 
\textit{A revised completeness result for the simply typed $\l\mu$-calculus using realizability semantics}, 
Logical Methods in Computer Science, 13(3:13), pp. 1--13, 2017, \url{https://doi.org/10.23638/LMCS-13(3:13)2017}. 

\bibitem{Par1} Parigot, M.
\textit{ $\l \mu$-calculus: an algorithmic interpretation of classical natural deduction},
In: A. Voronkov (editor), Logic Programming and Automated Reasoning, Lecture Notes in Computer Science (624), Springer Verlag, Berlin, pp. 190--201, 1992.

\bibitem{Par2} Parigot, M.
\textit{ Classical proofs as programs},
In: G. Gottlob, A. Leitsch, and D. Mundici (eds.), Proc. of 3rd Kurt Godel Colloquium, KGC'93, Lecture Notes in Computer Science (713), pp. 263--276, Springer-Verlag, 1993.

\bibitem{Par3} Parigot, M.
\textit{ Proofs of strong normalization for second order classical natural deduction}, 
Journal of Symbolic Logic (62), pp. 1461--1479, 1997.

\bibitem{Par4} Parigot, M. 
\textit{ Strong normalization of second order symmetric lambda-calculus.}
In Sanjiv Kapoor and Sanjiva Prasad, editors, Foundations of Software Technology
and Theoretical Computer Science, 20th Conference, FST TCS 2000 New Delhi,
India, December  13-15, 2000, Proceedings, volume 1974 of LNCS, pp. 442--453.,
Springer Verlag, 2000.

\bibitem{Pit} Pitts A. M. \textit{Parametric polymorphism and operational equivalence}, Mathematical Structures in Computer Science 10(3), 2000, pp. 321--359. \url{doi:10.1017/S0960129500003066}

\bibitem{Pol}Polonovski, E.
  \textit{Strong normalization of $\overline{\lambda}\m\tilde{\m}$-calculus with explicit substitutions},
In: Walukiewicz, I. (ed.), Foundations of Software Science and Computation Structures, 7th International Conference, FOSSACS 2004, Lecture Notes in Computer Science (2987), pp. 423--437, Springer Verlag, 2004.

\bibitem{Py} Py, W. 
\textit{ Confluence en $\lm$-calcul},
PhD thesis, University of Chamb\'ery, 1998.

\bibitem{Reh} Rehof, N. J., and S\o rensen, M. H. 
\textit{ The $\lambda_\delta$-calculus}, 
In: M. Hagiya, J. C. Mitchell (editors), Theoretical Aspects of Computer Software, Lecture Notes in Computer Science (789), pp. 516--542, Springer Verlag, 1994.

\bibitem{Sau1} Saurin, A.
\textit{ On the Relations between the Syntactic Theories of $\l\m$-calculi}, 
In: M. Kaminski, S. Martini (editors), 17th EACSL Annual Conference on Computer Science Logic, Lecture Notes in Computer Science (5213), pp. 154--168, Springer Verlag, 2008.

\bibitem{Sau2} Saurin, A.
\textit{B{\"o}hm theorem and B{\"o}hm trees for the Lambda-mu-calculus},
Theoretical Computer Science 435, pp. 106--138, 2012, \url{https://doi.org/10.1016/j.tcs.2012.02.027}.

\bibitem{Sor}  S\o rensen, M. H., and Urzyczyn, P. 
\textit{Lectures on the Curry-Howard Isomorphism},
Elsevier Science, 2006.

\bibitem{Tai} Tait, W.W. 
\textit{ Intensional interpretation of functionals of finite type I}, 
J. Symbolic Logic 32, pp. 198--212, 1967.

\bibitem{Ter08} Terui, K. 
\textit{Computational Ludics},
Theoretical Computer Science  412, pp. 2048--2071, 2011


\bibitem{Wad03} Wadler, P.
\textit{Call-by-value is dual to call-by-name},
SIGPLAN Not., 38(9), pp. 189--201, 2003.

\bibitem{Zei09} Noam Zeilberger, N. 
\textit{Refinement types and computational duality},
Proceedings of the 3rd workshop on Programming languages meets program PLPV'09, 2009.



\end{thebibliography}
\end{document}